\newcommand{\field}[1]{\mathbb{#1}}
\newcommand{\E}{\field{E}}
\theoremstyle{example} \theoremstyle{remark} \theoremstyle{lemma}
\theoremstyle{definition} \theoremstyle{corol}
\theoremstyle{proposition} \theoremstyle{condition}
\theoremstyle{assumption}
\newtheorem{assumption}{\n{Assumption}}[section]
\newtheorem{theorem}{\n{Theorem}}[section]
\newtheorem{example}{\n{Example}}[section]
\newtheorem{remark}{\n{Remark}}[section]
\newtheorem{lemma}{\n{Lemma}}[section]
\newtheorem{corollary}{\n{Corollary}}[section]
\newtheorem{proposition}{\n{Proposition}}[section]
\newcommand*{\I}{\imath}
\font\n=cmcsc10
\def\cov{{\mbox{cov}}}
\def\cum{{\mbox{cum}}}
\newcommand{\rmnum}[1]{\romannumeral #1}
\newcommand{\Rmnum}[1]{\expandafter\@slowromancap\romannumeral #1@}
\begin{document}
\begin{frontmatter}
\title{Bootstrapping High Dimensional Time Series} \runtitle{Bootstrapping High Dimensional Time Series}
\begin{aug}
\author{\fnms{Xianyang}
\snm{Zhang}\ead[label=e1]{zhangxiany@missouri.edu}\thanks{Assistant Professor, Department of Statistics, University of Missouri-Columbia, Columbia, MO 65211. E-mail: zhangxiany@missouri.edu. Tel:
+1 (573) 882-4455. Fax: +1 (573) 884-5524.}} and
\author{\fnms{Guang} \snm{Cheng}\ead[label=e2]{chengg@purdue.edu}\thanks{Corresponding Author. Associate Professor, Department
of Statistics, Purdue University, West Lafayette, IN 47906. E-mail:
chengg@purdue.edu. Tel: +1 (765) 496-9549. Fax: +1 (765) 494-0558.
Research Sponsored by NSF CAREER Award DMS-1151692, DMS-1418042, Simons
Foundation 305266.}}
\runauthor{X. Zhang and G. Cheng} \affiliation{University of
Missouri-Columbia and Purdue University}
\address{
X. Zhang\\
Department of Statistics\\
University of Missouri-Columbia\\
Columbia, MO 65211.\\
\printead{e1}\\
\phantom{E-mail:\ }}

\address{
G. Cheng\\
Department of Statistics\\
Purdue University\\
West Lafayette, IN 47906.\\
\printead{e2}\\ \phantom{E-mail:\ }}
\end{aug}

\begin{abstract}
This article studies bootstrap inference for high dimensional weakly
dependent time series in a general framework of approximately linear
statistics. The following high dimensional applications are covered:
(\rmnum{1}) uniform confidence band for mean vector; (\rmnum{2})
specification testing on the second order property of time series
such as white noise testing and bandedness testing of covariance
matrix; (\rmnum{3}) specification testing on the spectral property
of time series. In theory, we first derive a Gaussian approximation
result for the maximum of a sum of weakly dependent vectors, where
the dimension of the vectors is allowed to be exponentially larger
than the sample size. In particular, we illustrate an interesting
interplay between dependence and dimensionality, and also discuss
one type of ``dimension free" dependence structure. We further
propose a blockwise multiplier (wild) bootstrap that works for time
series with unknown autocovariance structure. These distributional
approximation errors, which are finite sample valid, decrease
polynomially in sample size. A non-overlapping
block bootstrap is also studied as a more flexible alternative. The above results
are established under the general physical/functional dependence
framework proposed in Wu (2005). Our work can be viewed as a
substantive extension of Chernozhukov et al. (2013) to time series
based on a variant of Stein's method developed therein.
\end{abstract}

\begin{keyword}[class=AMS]
\kwd[Primary ]{62M10}\;\; \kwd[Secondary ]{62E17} \kwd{62F40}
\end{keyword}

\begin{keyword}
Blockwise Bootstrap, Gaussian Approximation, High Dimensionality,
Physical Dependence Measure, Slepian interpolation, Stein's Method,
Time Series.
\end{keyword}
\end{frontmatter}

\section{Introduction}
High-dimensional data are increasingly encountered in many
applications of statistics such as bioinformatics, information
technology, medical imaging, astronomy and financial studies. In
recent years, there is a growing body of literature concerning
inference on the first and second order properties of high
dimensional data; see
\cite{clx13,clx14,cj11,cq2010,czz2010,lc2012,qc2012} among others.
The validity of these procedures is generally established under
independence amongst the data vectors, which can be quite
restrictive for situations that involve temporally observed data.
Examples include spatial-temporal modeling \cite{wh2010} and
financial study of a large number of asset returns \cite{twyz2011}.
Although high dimensional statistics has witnessed unprecedented
development, statistical inference for high dimensional time series
remains largely untouched so far. In the conventional low
dimensional setting, inference for time series data typically
involves the direct estimation of the asymptotic covariance matrix,
which is known to be difficult in the presence of heteroscedasticity
and autocorrelation of unknown forms \cite{andrews1991}. In the high
dimensional setting, where the dimension is
comparable or even larger than sample size, the classical
inferential procedures designed for the low dimensional case are no
longer applicable, e.g., the asymptotic covariance matrix is
singular. Along a different line, alternative nonparametric
procedures including block bootstrap, subsampling and blockwise
empirical likelihood \cite{Carl1986,kun89,ls92,prw99,kit97} have
been proposed to avoid the direct estimation of covariance matrices.
However, the extension of these procedures (coupled with suitable
testing procedures) to the high dimensional setting remains unclear.
One relevant high dimensional work (\cite{cxw2013}) we are aware is on
the estimation rates of the covariance/precision matrices of time
series.

In this paper, we establish a general framework of conducting bootstrap inference for high dimensional stationary time series under weak dependence. We start from three motivating examples that are mainly concerned with first or second order property of time series: (\rmnum{1}) uniform confidence band for mean vector; (\rmnum{2}) testing for serial correlation; (\rmnum{3}) testing on the bandedness of covariance matrix. The proposed bootstrap procedures are rather simple to implement and supported by simulation results. We want to emphasize that neither Gaussian assumption nor strong restrictions on the covariance structure are imposed in these applications. An important by-product of Examples (\rmnum{2}) and (\rmnum{3}) is the covariance structure testing for high dimensional time series that even does not rely on the existence of the null limit distribution. This new result is in sharp contrast with the existing literature for i.i.d data such as \cite{clx14, cq2010,czz2010}. We also remark that the maximum-type testing procedure considered in these examples is expected to be particularly powerful for detecting sparse alternatives (see \cite{clx14}). A comprehensive investigation along this line is left as our future topic.

The underlying theory in supporting these high dimensional applications is a general Gaussian approximation theory and its bootstrap version. The Gaussian approximation theory quantifies the Kolmogorov distance between the largest element of a sum of weakly dependent vectors and its Gaussian analog that shares the same autocovariance structure. We develop our theory in the general framework of dependency graph, which leads to delicate bounds on the Kolmogorov distance for various types of time series. The approximation error, which is finite sample valid, decreases polynomially in sample size even when the data dimension is exponentially high. Moreover, we study two important dependence structures in more details: $M$-dependent time series and weakly dependent time series. Although the sharpness of Kolmogorov distance is not established in this paper, our theoretical results (also see Figure \ref{fig:interplay}) strongly indicate an interesting interplay between dependence and dimensionality: the less dependent of the data vectors, the faster diverging rate of the dimension is allowed for obtaining an accurate Gaussian approximation. We also propose an interesting ``dimension free" dependence structure that allows the dimension to diverge at the rate as if the data were independent. However, in practice, the intrinsic dependence structure of time series is usually unknown. This motivates us to develop a bootstrap version of the Gaussian approximation theory that does not require such knowledge. Specifically, we propose a blockwise multiplier bootstrap that is able to capture the dependence amongst and within the data vectors. Moreover, it inherits the high quality approximation without relying on the autocovariance information. We also introduce a non-overlapping block bootstrap as a more flexible alternative. The above theoretical results are major building blocks of a general framework of conducting bootstrap inference for high dimensional time series. This general framework assumes that the quantity of interest admits an approximately linear expansion, and thus covers the three examples mentioned above. This quantity of interest can be expressed as a functional of the distribution of the time series with finite or infinite length. Hence, our result is also useful in making inference for the spectrum of time series.

Our general Gaussian approximation theory and its block bootstrap version substantially relax the independence assumption in \cite{cck13,abr10}, and is established using several techniques including the Slepian interpolation
\cite{Rollin2011}, leave-one-block-out argument (modification of
Stein's leave-one-out argument \cite{stein1986}), self-normalization
\cite{dls2009}, weak dependence measure \cite{wu2005}, and
$M$-dependent approximation \cite{ll2009}. It is worth pointing out
that our results are established under the physical/functional
dependence measure proposed in \cite{wu2005}. This framework (or its
variants) is known to be very general and easy to verify for linear
and nonlinear data-generating mechanisms, and it also provides a
convenient way for establishing large-sample theories for stationary
causal processes \cite{wu2005,cxw2013,wu2011}. In particular, our
work is largely inspired by a recent breakthrough in Gaussian
approximation for i.i.d data (\cite{cck13}) that obtained an
astounding improvement over the previous results in \cite{bent03} by
allowing the dimension of the data vectors to be exponentially
larger than the sample size.

The rest is organized as follows. In Section \ref{sec:statappl}, we describe three concrete bootstrap inference procedures mentioned above in details. Section \ref{sec:maxima} gives the Gaussian approximation result that works even when the dimension is exponentially larger than sample size, and Section \ref{sec:boot} proposes the blockwise multiplier (wild) bootstrap and also the non-overlapping block bootstrap that do not depend on the autocovariance structure of time series. Building on the results in Sections~\ref{sec:maxima} and~\ref{sec:boot}, a general framework of conducting bootstrap inference based on approximately linear statistics is established in Section \ref{sec:ts}. Three examples considered in \ref{sec:statappl} and one spectral testing example are covered by this framework. All the proofs are gathered in the supplementary material.

\section{High Dimensional Inference}\label{sec:statappl}
To motivate our general theory, we consider three concrete bootstrap
inference procedures for high dimensional time series: uniform
confidence band; white noise testing; and bandedness testing for
covariance matrix. These procedures are rather
straightforward to implement. The main focus of this section is
mostly on the methodological side, and the general theoretical
results are deferred to Section \ref{sec:ts}. An ad-hoc way of
choosing block size in bootstrap is discussed in
Section~\ref{sec:ucb}.

\subsection{Uniform confidence band}\label{sec:ucb}
Consider $n$ observations from a sequence of weakly dependent
$p$-dimensional time series $\{x_i\}$ with
$x_i=(x_{i1},\dots,x_{ip})'$. We are interested in constructing a
$100(1-\alpha)$th uniform confidence band for the mean vector
$\mu_0=(\mu_{01},\mu_{02},\dots,\mu_{0p})'$ in the form of
\begin{equation}\label{eg:ucb}
\left\{\mu=(\mu_1,\dots,\mu_p)'\in\mathbb{R}^p: \sqrt{n}\max_{1\leq
j\leq p}|\mu_j-\bar{x}_{nj}|\leq c(\alpha)\right\},
\end{equation}
where
$\bar{x}_n=(\bar{x}_{n1},\dots,\bar{x}_{np})'=\sum^{n}_{i=1}x_i/n$.
In the traditional low dimensional regime, confidence region for the
mean of a multivariate time series is typically constructed by
inverting a suitable test. A common choice is the Wald type test
which is of the form
$n(\bar{x}_n-\mu)'\widehat{\Sigma}^{-1}(\bar{x}_n-\mu)$, where
$\mu=(\mu_1,\dots,\mu_p)'$ and $\widehat{\Sigma}$ is a consistent
estimator of the so-called long run variance matrix. However,
obtaining a consistent $\widehat\Sigma$ could be difficult in
practice due to the unknown dependence structure. To avoid this
hassle, several appealing nonparametric alternatives, e.g., moving
block bootstrap method \cite{Carl1986,kun89,ls92}, subsampling
approach \cite{prw99} and block-wise empirical likelihood
\cite{kit97}, have been proposed. In the high dimensional regime,
where the dimension of the time series is comparable with or even
much larger than the sample size, inverting the Wald type test is no
longer applicable because the long run variance estimator
$\widehat{\Sigma}$ is singular for $p>n$. Moreover, the direct
application of the nonparametric approaches described above to the
high dimensional setting is unclear yet.

In this subsection, we propose a bootstrap-assisted method to obtain
the critical value $c(\alpha)$ in (\ref{eg:ucb}), whose theoretical
validity will be justified in  Section \ref{subsec:als}.
Specifically, we introduce the following blockwise multiplier (wild)
bootstrap. For simplicity, suppose $n=b_nl_n$ with $b_n,l_n\in
\mathbb{Z}$. Define the non-overlapping block sums,
$$\widehat{A}_{ij}=\sum^{ib_n}_{l=(i-1)b_n+1}(x_{lj}-\bar{x}_{nj}),\quad i=1,2,\dots,l_n,$$
and the bootstrap statistic,
\begin{align*}
T_{\widehat{A}}=\max_{1\leq j\leq
p}\frac{1}{\sqrt{n}}\left|\sum^{l_n}_{i=1}\widehat{A}_{ij}e_i\right|,
\end{align*}
where $\{e_{i}\}$ is a sequence of i.i.d. $N(0,1)$ random variables
independent of $\{x_i\}$. The bootstrap critical value is defined as
$c(\alpha):=\inf\{t\in\mathbb{R}:P(T_{\widehat{A}}\leq
t|\{x_i\}^{n}_{i=1})\geq 1-\alpha\}.$

We next conduct a small simulation study to assess the finite sample
coverage probability of the uniform confidence band. Consider a
$p$-dimensional VAR(1) (vector autoregressive) process,
\begin{equation}\label{eq:VAR}
x_{t}=\rho x_{t-1}+\sqrt{1-\rho^2}\epsilon_{t},
\end{equation}
where $\epsilon_t=(\epsilon_{t1},\dots,\epsilon_{tp})'$. For the
error process $\{\epsilon_t\}$, we consider three cases: (\rmnum{1})
$\epsilon_{tj}=(\varepsilon_{tj}+\varepsilon_{t0})/\sqrt{2},$ where
$(\varepsilon_{t0},\varepsilon_{t1},\dots,\varepsilon_{tp})'\overset{i.i.d.}{\sim}
N(0,I_{p+1})$; (\rmnum{2})
$\epsilon_{tj}=\rho_1\zeta_{tj}+\rho_2\zeta_{t(j+1)}+\cdots+\rho_p\zeta_{t(j+p-1)}$,
where $\{\rho_j\}^{p}_{j=1}$ are generated independently from
$\text{Unif}(2,3)$ (uniform distribution on [2,3]), and
$\{\zeta_{tj}\}$ are i.i.d $N(0,1)$ random variables; (\rmnum{3})
$\epsilon_{tj}$ is generated from the moving average model in
(\rmnum{2}) with $\{\zeta_{tj}\}$ being i.i.d centralized
Gamma$(4,1)$ random variables. Set $n=120$, $p=500,1000$, and
$\rho=0.2$ or $0.5$ in (\ref{eq:VAR}). To implement the blockwise
multiplier bootstrap, we choose $b_n=4,6,8,10,12,15,20.$

Table \ref{tab:mean} reports the coverage probabilities at 90\% and
95\% nominal levels based on 5000 simulations and 499 bootstrap
resamples. We note that the coverage probabilities appear to be low
for relatively small block size. When $\rho$ increases, a larger
block size is generally required to capture the dependence. Although
the coverage probability is generally sensitive to the choice of the
block size, with a proper block size, the coverage probability can
be reasonably close to the nominal level. For univariate time
series, there are two major approaches for selecting the optimal
block size: the nonparametric plug-in method (e.g. \cite{bk99}) and
the empirical criteria-based method \cite{HHJ95}. However, these
selection procedures are deduced based on the bias-variance
tradeoff, which are not intended to guarantee the best coverage of
confidence interval. Moreover, it is still unclear how these
selection rules can be extended to the high dimensional context.

Hence, we provide an ad-hoc way for choosing the block size below. Given a set of realizations $\{x_t\}^{n}_{t=1}$, we pick an initial
block size $b_{int}$ such that $n=b_{int}l_{int}$ where
$b_{int},l_{int}\in\mathbb{Z}$. Conditional on the sample
$\{x_t\}_{t=1}^{n}$, we let $s_1,\dots,s_{l_{int}}$ be i.i.d uniform
random variables on $\{0,\dots,l_{int}-1\}$ and define
$x_{(j-1)b_{int}+i}^*=x_{s_jb_{int}+i}$ with $1\leq j\leq l_{int}$
and $1\leq i\leq b_{int}.$ In other words, $\{x_t^*\}_{t=1}^{n}$ is
a non-overlapping block bootstrap sample with block size $b_{int}$.
For each $b_n$ (block size for the original sample), we can compute
the times that the sample mean $\bar{x}_n$ is contained in the
uniform confidence band constructed based on the bootstrap sample
$\{x_t^*\}_{t=1}^{n}$ and then compute the empirical coverage
probabilities based on $B$ bootstrap samples. This is based on the
notion that $\bar{x}_n$ is the true mean for the bootstrap sample
conditional on $\{x_t\}^{n}_{t=1}$. In this case, the block size,
which delivers the most accurate coverage for $\bar x_n$, can be
viewed as an estimate of the optimal $b_n$ for the original series.
We employ the above procedure with $b_{int}=6$ and $B=500$ to choose
the optimal block size. Based on 200 realizations from the original
data generating process, the coverage probabilities (given the
selected block size) in different simulation setup are summarized in
Table \ref{tab:mean-opt}. We observe that the coverage probability
based on the optimal block size is close to the best coverage
presented in Table \ref{tab:mean}. Finally we point out that it
might be possible to iterate the above procedure to further improve
the empirical performance.
%An ad hoc approach for selecting the optimal block size here is to
%combine the best block sizes computed for each component of the high
%dimensional time series through a suitable rule. Specially, we
%consider the optimal block size
%$$b_{opt}=\max_{1\leq j\leq p}\left\{\left(\frac{2\widehat{\rho}_j(1)}{1-\widehat{\rho}_j^2(1)}\right)^{2/3}n^{1/3}\right\}$$
%with $\widehat{\rho}_j(1)$ being the sample autocorrelation at lag one
%for the $j$th component, where we have used the optimal block size
%suggested in \cite{Carl1986} for univariate AR(1) process.

\begin{table}[!h]\scriptsize
\caption{Coverage probabilities of the uniform confidence band for
the mean, where the block size $b_n=4,6,8,10,12,15,20,$ and
$n=120$.}\label{tab:mean}
\begin{center}
\begin{tabular}{l rrrrrrrrrrrr}\toprule
&\multicolumn{2}{c}{$p=500$,(\rmnum{1})}&\multicolumn{2}{c}{$p=500$,(\rmnum{2})}&\multicolumn{2}{c}{$p=500$,(\rmnum{3})}
&\multicolumn{2}{c}{$p=1000$,(\rmnum{1})}&\multicolumn{2}{c}{$p=1000$,(\rmnum{2})}&\multicolumn{2}{c}{$p=1000$,(\rmnum{3})}
\\\cmidrule(r){2-3}\cmidrule(r){4-5}\cmidrule(r){6-7}\cmidrule(r){8-9}\cmidrule(r){10-11}\cmidrule(r){12-13}
&
90\%& 95\% & 90\% & 95\%& 90\% & 95\% & 90\% & 95\% & 90\% & 95\% & 90\% & 95\% \\
\midrule
$\rho=0.2$\\
$b_n=4$  & 85.0 & 92.2 & 85.6 & 92.6 & 85.5 & 91.7 & 86.0 & 92.8 & 84.8 & 91.9 & 84.7 & 91.4\\
$b_n=6$  & 87.8 & 93.8 & 85.8 & 92.7 & 86.0 & 92.4 & 87.7 & 94.5 & 86.0 & 92.6 & 85.8 & 92.7\\
$b_n=8$  & 89.1 & 95.5 & 85.7 & 92.3 & 86.4 & 93.1 & 89.2 & 95.1 & 85.8 & 92.2 & 85.6 & 92.3\\
$b_n=10$ & 89.5 & 95.7 & 85.7 & 92.3 & 85.2 & 92.1 & 90.7 & 96.0 & 85.9 & 92.5 & 86.1 & 92.5\\
$b_n=12$ & 89.2 & 95.3 & 85.4 & 91.8 & 85.4 & 92.5 & 90.4 & 96.5 & 84.7 & 91.9 & 86.4 & 92.9\\
$b_n=15$ & 90.3 & 96.0 & 84.6 & 91.8 & 85.2 & 92.3 & 90.2 & 96.4 & 85.0 & 92.3 & 85.3 & 92.4\\
$b_n=20$ & 90.2 & 96.5 & 83.0 & 90.7 & 83.2 & 90.8 & 91.2 & 96.9 & 84.1 & 91.3 & 84.2 & 91.9\\
%$b_{opt}$& \\
\hline
$\rho=0.5$\\
$b_n=4$  & 62.9 & 76.9 & 73.6 & 83.5 & 73.3 & 83.3 & 64.3 & 78.1 & 73.0 & 82.7 & 73.2 & 82.8\\
$b_n=6$  & 76.5 & 87.1 & 79.1 & 87.3 & 78.9 & 87.4 & 76.4 & 86.6 & 78.6 & 87.4 & 78.1 & 87.1\\
$b_n=8$  & 81.5 & 91.6 & 80.8 & 88.8 & 80.7 & 89.4 & 81.9 & 91.0 & 80.8 & 88.9 & 80.9 & 88.9\\
$b_n=10$ & 84.2 & 92.5 & 81.5 & 89.8 & 81.5 & 89.3 & 84.9 & 93.5 & 82.2 & 90.1 & 82.5 & 89.9\\
$b_n=12$ & 84.6 & 93.0 & 82.2 & 90.0 & 82.3 & 90.5 & 86.2 & 94.4 & 81.6 & 89.9 & 83.3 & 90.9\\
$b_n=15$ & 87.0 & 94.3 & 82.0 & 90.1 & 82.5 & 90.7 & 87.1 & 94.6 & 82.2 & 90.1 & 82.5 & 89.9\\
$b_n=20$ & 88.0 & 95.5 & 81.0 & 89.3 & 81.9 & 89.8 & 88.9 & 96.0 & 81.6 & 89.9 & 83.3 & 90.9\\
%$b_{opt}$& \\
\midrule
\end{tabular}
\end{center}
\end{table}

\begin{table}[!h]\footnotesize
\caption{Coverage probabilities of the uniform confidence band for
the mean, where the block size is chosen automatically, $p=500$,
$n=120$, and the nominal level is 95\%.}\label{tab:mean-opt}
\begin{center}
\begin{tabular}{rrrrrr}\toprule
 $\rho=0.2$,(\rmnum{1}) & $\rho=0.2$,(\rmnum{2}) & $\rho=0.2$,(\rmnum{3}) & $\rho=0.5$,(\rmnum{1}) & $\rho=0.5$,(\rmnum{2}) & $\rho=0.5$,(\rmnum{3})\\
\midrule
95.0 & 91.5 & 92.5 & 95.0 & 90.5 & 89.0\\
\midrule
\end{tabular}
\end{center}
\end{table}

\subsection{Testing for serial correlation}\label{subsec:cov}

Covariance matrix plays a crucial role in many areas of statistical
inference. For independent vectors, many methods have been developed
for testing specific structures of covariance matrices (see e.g.
\cite{cj11,czz2010,lc2012,qc2012} for some recent developments). In
this subsection, we examine the serial correlation of a sequence of
time series data by testing its autocovariance matrix (a more general measure than covariance matrix).

To illustrate the idea, let
$\gamma(l)=(\gamma_{jk}(l))_{j,k=1}^{p}=\E x_{i}x_{i+l}'\in
\mathbb{R}^{p\times p}$ be the autocovariance matrix of a
$p$-dimensional stationary time series $\{x_i\}$ with $\E x_i=0$.
Consider the null hypothesis
$$H_0:\gamma(l)=\widetilde{\gamma}(l):=(\widetilde{\gamma}_{jk}(l))_{j,k=1}^p,$$
for any $l\in \Lambda \subset \{0,1,2,\dots\}$ versus the
alternative that $H_a: \gamma(l)\neq \widetilde{\gamma}(l)$ for some
$l\in \Lambda.$ The cardinality of $\Lambda$ is allowed to grow with
the dimension $p$. Let
$\widehat{\gamma}_{jk}(l)=\sum^{n-l}_{i=1}x_{ij}x_{(i+l)k}/n$ for
$1\leq j,k\leq p$ be the sample autocovariance at lag $l$. Our test
rejects the null hypothesis $H_0$ if
\begin{equation}\label{eq:cov-general}
\sqrt{n}\max_{l\in \Lambda}\max_{1\leq j,k\leq
p}|\widehat{\gamma}_{jk}(l)-\widetilde{\gamma}_{jk}(l)|>c(\alpha),
\end{equation}
where $c(\alpha)$ denotes the bootstrap critical value at level
$\alpha$. This framework includes several important applications
such as white noise testing (i.e., testing for serial correlation) and covariance testing.

In the white noise testing, we consider $H_0:
\gamma(l)=\mathbf{0}_{p\times p}$ for any $1\leq l\leq L$ v.s. $H_a:
\gamma(l)\neq \mathbf{0}_{p\times p}$ for some $1\leq l\leq L$,
where $\mathbf{0}_{p\times p}$ denotes a $p\times p$ matrix of all
zeros. This is a standard diagnostic procedure in time series
analysis, e.g., \cite{bp1970,Robinson1991,hong96,deo2000} among
others. However, in the high dimensional setting, i.e., $p^2\gg n$,
there seems no systematic method available to test the white noise
assumption. The proposed test statistic $\sqrt{n}\max_{1\leq l\leq
L}\max_{1\leq j,k\leq p}|\widehat{\gamma}_{jk}(l)|$ fills in this
gap. Again, we employ the blockwise multiplier bootstrap to obtain
the critical value $c(\alpha)$. To proceed, we let
$\nu_i=(\nu_{i,1},\dots,\nu_{i,p^2L})=(\text{vec}(x_{i}x_{i+1}')',\dots,\text{vec}(x_{i}x_{i+L}')')'\in\mathbb{R}^{p^2L}$
for $i=1,\dots,N:=n-L$, where $\text{vec}$ denotes the operator that
stacks the columns of a $p\times p$ matrix as a vector with $p^2$
components. Suppose $N=b_nl_n$ for $b_n,l_n\in\mathbb{Z}$. Define
\begin{align*}
\widetilde{T}_{\widehat{A}}=\max_{1\leq j\leq
p^2L}\frac{1}{\sqrt{n}}\left|\sum^{l_n}_{i=1}\widehat{A}_{ij}e_i\right|,\quad
\widehat{A}_{ij}=\sum^{ib_n}_{l=(i-1)b_n+1}(\nu_{l,j}-\bar{\nu}_{nj}),
\end{align*}
where $\{e_{i}\}$ is a sequence of i.i.d standard normal
independent of $\{x_i\}$, and
$\bar{\nu}_{nj}=\sum^{N}_{i=1}\nu_{i,j}/N.$ The bootstrap critical
value is then given by
$c(\alpha):=\inf\{t\in\mathbb{R}:P(\widetilde{T}_{\widehat{A}}\leq
t|\{x_i\}^{n}_{i=1})\geq 1-\alpha\}$. The above procedure can be
easily modified to get the critical value for the general test
described in (\ref{eq:cov-general}).

When assuming $\Lambda=\{0\}$, we obtain an important by-product:
covariance structure testing for high dimensional vector. In this
case, our test reduces to $\sqrt{n}\max_{1\leq j\leq k\leq
p}|\widehat{\gamma}_{jk}(0)-\widetilde{\gamma}_{jk}(0)|>c(\alpha).$
Compared to the existing work in the independence case, e.g.,
\cite{clx14}, our test enjoys three appealing features: (\rmnum{1})
it allows dependence amongst
data vectors and relaxes the Gaussian assumption; (\rmnum{2}) it does not require the existence of a
null limit distribution such as the extreme distribution of Type
\Rmnum{1} in \cite{cj11}. Hence, we can avoid the slow convergence
issue of the extreme value distribution (see \cite{lls08}), which
causes an inaccurate critical value. Rather, a blockwise multiplier
bootstrap is employed to provide high quality approximation;
(\rmnum{3}) it does not impose strong restrictions on the covariance
structure such as sparsity on the precision matrix \cite{clx14} or
pseudo-independence among its components \cite{cq2010,czz2010}.

To evaluate the finite sample performance of the white noise testing
procedure, we consider the following data generating processes:
(\rmnum{1}) independent normal random vectors whose covariance
structure is determined by a moving average model
$x_{ij}=\rho_1\zeta_{ij}+\rho_2\zeta_{i(j+1)}+\cdots+\rho_p\zeta_{i(j+p-1)}$,
where $\{\rho_j\}^{p}_{j=1}$ are generated independently from
$\text{Unif}(2, 3)$, and $\{\zeta_{ij}\}$ are i.i.d $N(0,1)$ random
variables; (\rmnum{2}) multivariate ARCH model defined as
$x_i=\Sigma^{1/2}_i\epsilon_i$ with $\epsilon_i\sim N(0,I_p)$ and
$\Sigma_i=0.1I_p+0.9x_{i-1}x_{i-1}'$, where $\Sigma^{1/2}_i$ is a
lower triangular matrix based on the Cholesky decomposition of
$\Sigma_i$; (\rmnum{3}) VAR(1) model $x_{i}=\rho
x_{i-1}+\sqrt{1-\rho^2}\epsilon_{i}$, where $\rho=0.3$ and the
errors $\{\epsilon_{i}\}$ are generated according to (\rmnum{1}). We
consider $n=60$ and $p=30$ or $50.$ Notice that the actual number of
parameters in consideration is $p^2\times L$, where $L$ is the
number of lags specified in the hypothesis. Table \ref{tab:white}
summarizes the rejection probabilities at 10\% and 5\% nominal
levels based on 5000 simulations and 499 bootstrap resamples. In
general, the proposed method delivers reasonable size and power,
although we still observe some downward size distortion and power
loss especially for $L=3$. The power loss here is presumably due to
the correlation structure of the VAR(1) model. It is also worth
noting that the choice of $b_n=1$ generally performs well for the
martingale difference sequences considered under the null.

\begin{table}[!h]\footnotesize
\caption{Rejection percentages for testing the uncorrelatedness,
where the block size $b_n=1,2,3,4,5,6,$ and $n=60$. Cases (i) and
(ii) are under null, while case (iii) is under
alternative.}\label{tab:white}
\begin{center}
\begin{tabular}{l rrrrrrrrrrrr}\toprule
&\multicolumn{2}{c}{$p=30$,(\rmnum{1})}&\multicolumn{2}{c}{$p=30$,(\rmnum{2})}&\multicolumn{2}{c}{$p=30$,(\rmnum{3})}
&\multicolumn{2}{c}{$p=50$,(\rmnum{1})}&\multicolumn{2}{c}{$p=50$,(\rmnum{2})}&\multicolumn{2}{c}{$p=50$,(\rmnum{3})}
\\\cmidrule(r){2-3}\cmidrule(r){4-5}\cmidrule(r){6-7}\cmidrule(r){8-9}\cmidrule(r){10-11}\cmidrule(r){12-13}
&
10\%& 5\% & 10\% & 5\%& 10\% & 5\% & 10\% & 5\% & 10\% & 5\% & 10\% &5\% \\
\midrule
$L=1$\\
$b_n=1$  & 8.1  & 3.5 & 9.2 & 3.2 & 73.1 & 59.1 & 8.3 & 3.9 & 8.9 & 2.8 & 72.4 & 58.9\\
$b_n=2$  & 9.2  & 3.6 & 7.0 & 2.4 & 67.0 & 49.6 & 8.7 & 3.2 & 6.7 & 2.2 & 68.2 & 49.4\\
$b_n=3$  &10.8  & 4.6 & 6.8 & 2.5 & 66.0 & 46.4 & 9.9 & 4.1 & 6.9 & 2.6 & 66.4 & 46.7\\
$b_n=4$  &10.9  & 4.6 & 6.7 & 3.0 & 67.0 & 46.8 & 11.0& 4.1 & 6.9 & 3.0 & 66.4 & 46.3\\
$b_n=5$  &11.4  & 4.5 & 7.8 & 3.7 & 69.2 & 47.5 & 11.6& 4.5 & 7.8 & 3.7 & 67.6 & 46.6\\
$b_n=6$  &12.7  & 5.2 & 9.2 & 4.7 & 67.7 & 47.7 & 12.3& 5.1 & 8.3 & 4.4 & 68.2 & 48.2\\
\hline
$L=3$\\
$b_n=1$  & 7.2  & 2.4 & 8.5 & 3.3 & 58.3 & 43.8 & 6.7 & 2.5 & 8.6 & 3.2 & 58.7 & 43.4\\
$b_n=2$  & 7.6  & 2.7 & 5.4 & 2.1 & 51.3 & 33.0 & 7.9 & 3.0 & 5.3 & 2.4 & 51.3 & 32.4\\
$b_n=3$  & 6.9  & 2.3 & 3.9 & 1.5 & 46.4 & 28.1 & 6.4 & 2.0 & 3.7 & 1.6 & 46.9 & 27.7\\
$b_n=4$  & 7.0  & 2.3 & 3.8 & 2.0 & 47.0 & 27.4 & 6.6 & 2.0 & 4.2 & 2.2 & 47.5 & 28.2\\
$b_n=5$  & 7.8  & 2.4 & 5.1 & 2.4 & 48.6 & 28.2 & 7.4 & 2.2 & 4.6 & 2.5 & 47.7 & 27.5\\
$b_n=6$  & 7.9  & 2.5 & 6.4 & 3.8 & 49.3 & 28.1 & 8.7 & 2.7 & 5.9 & 3.2 & 49.1 & 28.1\\
\midrule
\end{tabular}
\end{center}
\end{table}

\begin{remark}
{\rm The simulation results demonstrate the usefulness of the
proposed method but they also leave some room for improvement. Here
we point out two possibilities: (\rmnum{1}) it is of interest to
study the studentized version of the test statistic which may be
more efficient as expected in the low dimensional setting (see
Remark \ref{rk:student}); (\rmnum{2}) in the sparsity situation, the
test statistic can be constructed based on a suitable linear
transformation of the observations. The linear transformation aims
to magnify the signals owing to the dependence within the data
vector under alternatives, and hence improves the power of the
testing procedure, e.g., \cite{HJ10,clx14}.}
\end{remark}

\subsection{Bandedness testing of covariance matrix}\label{sec:bandtest}
In this subsection, we consider testing the bandedness of covariance
matrix $\gamma(0)$. This problem aries, for example, in econometrics
when testing certain economic theories; see \cite{andrews1991,lb95}
and reference therein. Also see \cite{cj11,qc2012} for independent case. 
For any integer $\iota\geq 1$ (which possibly
depends on $n$ or $p$), we want to test
\begin{equation}
H_0:\gamma_{jk}(0)=0,\quad |j-k|\geq \iota.
\end{equation}
Our setting significantly generalizes the one considered in
\cite{cj11} which focuses on independent Gaussian vectors. Here, we
shall allow non-Gaussian and dependent random vectors.

We define the test statistic as
\begin{equation}
T_{band}=\sqrt{n}\max_{|j-k|\geq
\iota}\left|\frac{\widehat{\gamma}_{jk}(0)}{\sqrt{\widehat{\gamma}_{jj}(0)\widehat{\gamma}_{kk}(0)}}\right|=\max_{|j-k|\geq
\iota}\frac{1}{\sqrt{n}}\left|\sum^{n}_{i=1}\frac{x_{ij}x_{ik}}{\sqrt{\widehat{\gamma}_{jj}(0)\widehat{\gamma}_{kk}(0)}}\right|.
\end{equation}
For $n=b_nl_n$ with $b_n,l_n\in\mathbb{Z}$, we define the block sums
\begin{align*}
\widehat{A}_{i,jk}=\sum^{ib_n}_{l=(i-1)b_n+1}\frac{x_{ij}x_{ik}-\widehat{\gamma}_{jk}(0)}{\sqrt{\widehat{\gamma}_{jj}(0)\widehat{\gamma}_{kk}(0)}},\quad
i=1,2,\dots,l_n,
\end{align*}
and the bootstrap statistic
\begin{align*}
T_{band,\widehat{A}}=\max_{|j-k|\geq \iota
}\left|\frac{1}{\sqrt{n}}\sum^{l_n}_{i=1}\widehat{A}_{i,jk}e_i\right|,
\end{align*}
where $\{e_{i}\}$ is a sequence of i.i.d $N(0,1)$ 
independent of $\{x_i\}$. We reject the null $H_0$ if
$T_{band,\widehat{A}}>c_{band}(\alpha)$, where $
c_{band}(\alpha):=\inf\{t\in\mathbb{R}:P(T_{band,\widehat{A}}\leq
t|\{x_i\}^{n}_{i=1})\geq 1-\alpha\}.$ Alternatively, one can employ
the non-overlapping block bootstrap (to be presented in Sections \ref{subsec:block}) to obtain the critical value. 

\section{Gaussian Approximation Theory}\label{sec:maxima}
In this section, we derive a Gaussian approximation theory that
serves as the first step in studying high dimensional inference
procedures in Section~\ref{sec:statappl}. Consider a sequence of
$p$-dimensional dependent random vectors $\{x_i\}^{n}_{i=1}$ with
$x_i=(x_{i1},\dots,x_{ip})'$. Suppose $\E x_i=0$ and
$\Sigma_{i,j}:=\cov(x_i,x_j)\in\mathbb{R}^{p\times p}$. The Gaussian
counterpart is defined as a sequence of Gaussian random variables
$\{y_i\}^{n}_{i=1}$ independent of $\{x_i\}_{i=1}^n$. In addition,
$\{y_i\}^{n}_{i=1}$ preserves the autocovariance structure of
$\{x_i\}$ in the sense that $\E y_i=0$ and
$\cov(y_i,y_j)=\Sigma_{i,j}$ (note that this assumption can be
weakened, see Remark \ref{rm:cov}). Gaussian approximation theory
quantifies the Kolmogorov distance defined as
\begin{equation}\label{dfn:rhon}
\rho_n:=\sup_{t\in\mathbb{R}}\left|P(T_X\leq t)-P(T_Y\leq t)\right|,
\end{equation}
where $T_X=\max_{1\leq j\leq p}X_j$, $T_Y=\max_{1\leq j\leq p}Y_j$,
and
\begin{equation}
X=(X_1,\dots,X_p)'=\frac{1}{\sqrt{n}}\sum^{n}_{i=1}x_i,\quad
Y=(Y_1,\dots,Y_p)'=\frac{1}{\sqrt{n}}\sum^{n}_{i=1}y_i.
\end{equation}

Chernozhukov et al (2013) recently showed that for independent data
vectors, $\rho_n$ decays to zero polynomially in the sample size. In
Section~\ref{subsec:dep-graph}, we substantially relax their
independence assumption by first establishing a general proposition,
i.e., Proposition~\ref{prop1}, in the framework of dependency graph.
This general result leads to delicate bounds on the Kolmogorov
distance for various types of weakly dependent time series even when
their dimension is exponentially high, i.e.,  Sections
\ref{subsec:m-dep} -- \ref{subsec:weak-dep}.

\subsection{General framework: dependency graph}\label{subsec:dep-graph}

In this subsection, we introduce a flexible framework in modelling the
dependence among a sequence of $p$-dimensional dependent ({\em
unnecessarily identical}) random vectors $\{x_i\}^{n}_{i=1}$. We
call it as dependency graph $G_n=(V_n,E_n)$, where
$V_n=\{1,2,\dots,n\}$ is a set of vertices and $E_n$ is the
corresponding set of undirected edges. For any two disjoint subsets
of vertices $S,T\subseteq V_n$, if there is no edge from any vertex
in $S$ to any vertex in $T$, the collections $\{x_i\}_{i\in S}$ and
$\{x_i\}_{i\in T}$ are independent. Let $D_{\max,n}=\max_{1\leq
i\leq n}\sum_{j=1}^{n}\mathbf{I}\{\{i,j\}\in E_n\}$ be the maximum
degree of $G_n$ and denote $D_{n}=1+D_{\max,n}$. Throughout the
paper, we allow $D_n$ to grow with the sample size $n.$ For example,
if an array $\{x_{i,n}\}^{n}_{i=1}$ is a $M:=M_n$ dependent sequence
(that is $x_{i,n}$ and $x_{j,n}$ are independent if $|i-j|>M$), then
we have $D_n=2M+1$.
%\begin{assumption}
%$\sup_nD_{max,n}<\infty$.
%\end{assumption}

Within this general framework, we want to understand the largest
possible diverging rate of $p$ (w.r.t. $n$) under which the
Kolmogorov distance between the distributions of $T_X$ and $T_Y$,
i.e., $\rho_n$ defined in (\ref{dfn:rhon}), converges to zero.
Recall that $T_X=\max_{1\leq j\leq p}X_j$, $T_Y=\max_{1\leq j\leq
p}Y_j$. The problem of comparing distributions of maxima is
nontrivial since the maximum function $z=(z_1,\dots,z_p)'\rightarrow
\max_{1\leq j\leq p}z_j$ is non-differentiable. To overcome this
difficulty, we consider a smooth approximation of the maximum
function,
$$F_{\beta}(z):=\beta^{-1}\log\left(\sum^{p}_{j=1}\exp(\beta z_j)\right),\quad z=(z_1,\dots,z_p)',$$
where $\beta>0$ is the smoothing parameter that controls the level
of approximation. Simple algebra yields that (see \cite{c2005}),
%\begin{align*}
%\max_{1\leq j\leq p}z_j=\beta^{-1}\log\left(\max_{1\leq j\leq
%p}\exp(\beta z_j)\right) \leq F_{\beta}(z) \leq
%\beta^{-1}\log\left(p\max_{1\leq j\leq p}\exp(\beta z_j)\right),
%\end{align*}
%which implies that
\begin{align}\label{eq:Fbeta}
0\leq F_{\beta}(z)-\max_{1\leq j\leq p}z_j\leq \beta^{-1}\log p.
\end{align}
Denote by $C^k(\mathbb{R})$ the class of $k$ times continuously
differentiable functions from $\mathbb{R}$ to itself, and denote by
$C^k_b(\mathbb{R})$ the class of functions $f\in C^k(\mathbb{R})$
such that $\sup_{z\in\mathbb{R}}|\partial^j f(z)/\partial
z^j|<\infty$ for $j=0,1,\dots,k.$ Set $m=g\circ F_{\beta}$ with
$g\in C_b^{3}(\mathbb{R})$. In Proposition~\ref{prop1} below, we
derive a non-asymptotic upper bound for the quantity
$|\E[m(X)-m(Y)]|$ by employing the Slepian interpolation
\cite{Rollin2011}, and modifying Stein's leave-one-out argument
\cite{stein1986} to the leave-one-block-out argument for capturing
the local dependence of the data.

Denote the truncated variables $\widetilde{x}_{ij}=(x_{ij}\wedge
M_x)\vee(-M_x)-\E[(x_{ij}\wedge M_x)\vee(-M_x)]$ and
$\widetilde{y}_{ij}=(y_{ij}\wedge M_y)\vee(-M_y)$ for some $M_x,
M_y>0$. Let
$\widetilde{x}_i=(\widetilde{x}_{i1},\dots,\widetilde{x}_{ip})'$ and
$\widetilde{y}_i=(\widetilde{y}_{i1},\dots,\widetilde{y}_{ip})'$.
For $1\leq i\leq n$, let $N_i=\{j: \{i,j\}\in E_n\}$ be the set of
neighbors of $i$, and $\widetilde{N}_i=\{i\}\cup N_i.$ Let
$\phi(M_x)$ be a constant depending on the threshold parameter $M_x$
such that
\begin{align*}
\max_{1\leq j,k\leq p}\frac{1}{n}\sum^{n}_{i=1}\left|\sum_{l\in
\widetilde{N}_i}\left(\E
x_{ij}x_{lk}-\E\widetilde{x}_{ij}\widetilde{x}_{lk}\right)\right|
\leq & \phi(M_{x}).
\end{align*}
Analogous quantity $\phi(M_y)$ can be defined for $\{y_{i}\}.$ Set
$\phi(M_x,M_y)=\phi(M_x)+\phi(M_y)$. Define
\begin{align*}
&m_{x,k}=(\bar{\E}\max_{1\leq j\leq p}|x_{ij}|^k)^{1/k},\quad m_{y,k}=(\bar{\E}\max_{1\leq j\leq p}|y_{ij}|^k)^{1/k},\\
&\bar{m}_{x,k}=\max_{1\leq j\leq p}(\bar{\E}|x_{ij}|^k)^{1/k},\quad
\bar{m}_{y,k}=\max_{1\leq j\leq p}(\bar{\E}|y_{ij}|^k)^{1/k},
\end{align*}
where $\bar{\E}[z_i]=\sum^{n}_{i=1}\E z_i/n$ for a sequence of
random variables $\{z_i\}^{n}_{i=1}$. Note that $\bar{m}_{x,k}\leq
m_{x,k}$ and $\bar{m}_{y,k}\leq m_{y,k}$. Further define an
indicator function,
\begin{align*}
\mathcal{I}:=\mathcal{I}_{\Delta}=\mathbf{1}\left\{\max_{1\leq j\leq
p}|X_j-\widetilde{X}_j|\leq \Delta, \max_{1\leq j\leq
p}|Y_j-\widetilde{Y}_j|\leq \Delta\right\},
\end{align*}
where
$\widetilde{X}=(\widetilde{X}_1,\dots,\widetilde{X}_p)'=\frac{1}{\sqrt{n}}\sum^{n}_{i=1}\widetilde{x}_i$
and
$\widetilde{Y}=(\widetilde{Y}_1,\dots,\widetilde{Y}_p)'=\frac{1}{\sqrt{n}}\sum^{n}_{i=1}\widetilde{y}_i.$
\begin{proposition}\label{prop1}
Assume that $2\sqrt{5}\beta D_n^2M_{xy}/\sqrt{n} \leq 1$ with
$M_{xy}=\max\{M_x,M_y\}$. Then we have for any $\Delta>0,$
\begin{equation}\label{eq:prop1}
\begin{split}
|\E [m(X)-m(Y)]| \lesssim &
(G_2+G_1\beta)\phi(M_x,M_y)+(G_3+G_2\beta+G_1\beta^2)\frac{D_n^{2}}{\sqrt{n}}(\bar{m}_{x,3}^3+\bar{m}_{y,3}^3)
\\&+(G_3+G_2\beta+G_1\beta^2)\frac{D_n^{3}}{\sqrt{n}}(m_{x,3}^3+m_{y,3}^3)+G_1\Delta+G_0\E[1-\mathcal{I}],
\end{split}
\end{equation}
where $G_k=\sup_{z\in\mathbb{R}}|\partial^k g(z)/\partial z^k|$ for
$k\geq 0$. In addition, if $2\sqrt{5}\beta D_n^3M_{xy}/\sqrt{n} \leq
1$, we can replace $m_{x,3}^3+m_{y,3}^3$ by
$\bar{m}_{x,3}^3+\bar{m}_{y,3}^3$ in the above expression.
\end{proposition}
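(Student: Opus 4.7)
The plan is to express $\E[m(X)-m(Y)]$ as a telescoping sum through the truncated vectors $\widetilde X,\widetilde Y$, then run a Slepian (Gaussian) interpolation between $\widetilde X$ and $\widetilde Y$. The Gaussian endpoint is handled by standard Gaussian integration by parts, while the non-Gaussian endpoint is handled by a leave-one-block-out variant of Stein's leave-one-out argument, adapted to the dependency graph $G_n$. The principal difficulty lies in the final bookkeeping, where the powers $D_n^2$ vs.\ $D_n^3$ and the averaged third moment $\bar m_{x,3}^3$ vs.\ the uniform third moment $m_{x,3}^3$ must be allocated to the correct cubic-remainder configurations.

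First I would write
\[
\E[m(X)-m(Y)]=\E[m(X)-m(\widetilde X)]+\E[m(\widetilde X)-m(\widetilde Y)]+\E[m(\widetilde Y)-m(Y)]
\]
and bound the outer two pieces using the indicator $\mathcal I$. The gradient identity $\sum_j\partial_j F_\beta=1$ with nonnegative partial derivatives yields $\|\nabla m\|_1\le G_1$, so on $\mathcal I$ one has $|m(X)-m(\widetilde X)|\le G_1\Delta$, and on $\mathcal I^c$ the crude bound via $\|m\|_\infty\le G_0$ gives $2G_0\mathbf 1_{\mathcal I^c}$. This produces the $G_1\Delta+G_0\E[1-\mathcal I]$ contribution to the final inequality.

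For the middle piece, I would set $Z(t)=\sqrt t\,\widetilde X+\sqrt{1-t}\,\widetilde Y$ so that
\[
\E[m(\widetilde X)-m(\widetilde Y)]=\int_0^1\frac{1}{2\sqrt n}\sum_{i,j}\E\!\left[\partial_j m(Z(t))\Bigl(\tfrac{\widetilde x_{ij}}{\sqrt t}-\tfrac{\widetilde y_{ij}}{\sqrt{1-t}}\Bigr)\right]dt.
\]
On the $\widetilde y$ side, Gaussian integration by parts gives $\tfrac12\sum_{j,k}\cov(\widetilde Y_j,\widetilde Y_k)\,\E[\partial_{jk}m(Z(t))]$. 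On the $\widetilde x$ side, define the blocked-out sum $\widetilde X^{(i)}=\widetilde X-n^{-1/2}\sum_{l\in\widetilde N_i}\widetilde x_l$ and $Z^{(i)}(t)=\sqrt t\,\widetilde X^{(i)}+\sqrt{1-t}\,\widetilde Y$. By the dependency-graph property, $\widetilde x_{ij}$ is independent of $Z^{(i)}(t)$, so $\E[\widetilde x_{ij}\partial_j m(Z^{(i)}(t))]=0$ and a second-order Taylor expansion around $Z^{(i)}(t)$ yields
\[
\E[\widetilde x_{ij}\partial_j m(Z(t))]=\tfrac{\sqrt t}{\sqrt n}\sum_{k,\,l\in\widetilde N_i}\E[\widetilde x_{ij}\widetilde x_{lk}]\,\E[\partial_{jk}m(Z^{(i)}(t))]+R_{ij}(t),
\]
with $R_{ij}(t)$ a cubic remainder in the increments $Z(t)-Z^{(i)}(t)$.

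Summing over $i,j$ and using $\cov(\widetilde x_{ij},\widetilde x_{lk})=0$ for $l\notin\widetilde N_i$, the leading-order $\widetilde x$ term matches its $\widetilde y$ counterpart up to the difference of truncated autocovariances, which is controlled in $\ell^\infty$ by $\phi(M_x,M_y)$; paired with the standard bound $\sum_{j,k}|\partial_{jk}m|\le G_2+G_1\beta$ for $m=g\circ F_\beta$ (as in Chernozhukov et al.\ (2013)), this delivers the $(G_2+G_1\beta)\phi(M_x,M_y)$ term. The remaining error consists of the $R_{ij}(t)$'s plus the cost of replacing $Z^{(i)}(t)$ by $Z(t)$ inside the second-derivative factor; both are cubic and are bounded using $\sum_{j,k,l}|\partial_{jkl}m|\le G_3+G_2\beta+G_1\beta^2$. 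In the triple sum $\sum_i\sum_{l,l'\in\widetilde N_i}$ the index $i$ is free ($n$ choices) and $l,l'$ each lie in $\widetilde N_i$ (at most $D_n$ choices), giving the $D_n^2/\sqrt n$ prefactor with $\bar m_{x,3}^3$ whenever all three moment factors can be localized per-coordinate; an additional $D_n$ and the uniform-in-$j$ moment $m_{x,3}^3$ appear precisely when the Taylor remainder forces a supremum over the coordinate index before summation. The condition $2\sqrt5\,\beta D_n^2 M_{xy}/\sqrt n\le 1$ ensures the $\beta$-dependent derivative bounds of $F_\beta$ propagate through the Taylor step, and strengthening it to $D_n^3$ lets the entire cubic error be expressed in the tighter $\bar m_{x,3}^3$ form. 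The main obstacle is exactly this last accounting: carefully isolating, in the triple-neighborhood sum, which configurations admit the averaged moment versus which require the uniform moment, and verifying that the replacement $Z^{(i)}(t)\leftrightarrow Z(t)$ inside the second-derivative factor does not introduce an error outside the stated bound.
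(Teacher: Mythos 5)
Your overall architecture matches the paper's: truncate, telescope through $\widetilde X,\widetilde Y$ to produce the $G_1\Delta+G_0\E[1-\mathcal I]$ terms, run a Slepian interpolation on the middle piece, and use a leave-one-block-out Stein argument together with the bounds $\sum_{j,k}|\partial_{jk}m|\lesssim G_2+G_1\beta$ and $\sum_{j,k,l}|\partial_{jkl}m|\lesssim G_3+G_2\beta+G_1\beta^2$. However, there is a genuine gap in your treatment of the Gaussian endpoint. You handle the $\widetilde y$ side by Gaussian integration by parts, claiming it yields $\tfrac12\sum_{j,k}\cov(\widetilde Y_j,\widetilde Y_k)\,\E[\partial_{jk}m(Z(t))]$. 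But $\widetilde y_{ij}=(y_{ij}\wedge M_y)\vee(-M_y)$ is a \emph{truncated} Gaussian, hence not Gaussian, and Stein's identity $\E[\widetilde Y_k f(\widetilde Y)]=\sum_{k'}\cov(\widetilde Y_k,\widetilde Y_{k'})\E[\partial_{k'}f(\widetilde Y)]$ simply does not hold for it. Since the interpolation is explicitly between the truncated vectors, this step fails as written, and there is no clean way to salvage it by "untruncating" inside $Z(t)$ without reintroducing exactly the error you are trying to isolate.

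The paper avoids this by treating the two endpoints symmetrically: it interpolates summand-by-summand, $Z_i(t)=(\sqrt t\,\widetilde x_i+\sqrt{1-t}\,\widetilde y_i)/\sqrt n$, and applies the \emph{same} leave-one-block-out argument to both the $\widetilde x$ and $\widetilde y$ parts of $\dot Z_{ij}(t)$. This works because coordinatewise truncation preserves the dependency-graph independence of the Gaussian sequence (for jointly Gaussian $y_i$, absence of an edge means zero covariance, hence independence, which survives truncation), so the first-order term $I_1$ vanishes for both pieces and the second-order terms produce $\sum_{l\in\widetilde N_i}(\E\widetilde x_{ij}\widetilde x_{lk}-\E\widetilde y_{ij}\widetilde y_{lk})$, which is controlled by $\phi(M_x,M_y)$ via the bridge $\E x_{ij}x_{lk}=\E y_{ij}y_{lk}$. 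Your bookkeeping for the cubic remainders is also vaguer than it should be: in the paper the $D_n^2\bar m^3/\sqrt n$ term comes from the third-order Taylor remainder $I_3$ (bounded via $\max_j\bar\E|\cdot|^3$ after conditioning), while the $D_n^3 m^3/\sqrt n$ term comes specifically from re-expanding $\partial_{jk}m(Z^{(i)})$ around the doubly-expanded neighborhood $\mathcal Z^{(i)}$, where the lack of independence forces the $\E\max_j$ route; the stronger condition $2\sqrt5\,\beta D_n^3M_{xy}/\sqrt n\le1$ is what permits converting that term to the $\bar m^3$ form. You should make this allocation explicit rather than asserting it.
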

\noindent The proof of Proposition~\ref{prop1} is adapted from that
of Theorem 2.1 in \cite{cck13} for i.i.d case.

By approximating the indicator function $I\{\cdot\leq t\}$ with a
suitable smooth function $g(\cdot)$, Proposition \ref{prop1} leads
to an upper bound on the Kolmogorov distance, i.e., $\rho_n$ defined
in (\ref{dfn:rhon}). In fact, the upper bound in (\ref{eq:prop1})
can be further simplified using the self-normalization technique
(see Lemma \ref{lemma:self}) and certain arguments under weak
dependence assumption. Finally, by optimizing the simplified upper
bound (see Theorem \ref{thm1}), we obtain various convergence rates
for $\rho_n$ in Sections \ref{subsec:m-dep} --
\ref{subsec:weak-dep}.

\begin{remark}\label{rm:cov}
{\rm In view of the proof of Proposition \ref{prop1} (see e.g.
(\ref{eq:cov-ts})), the assumption that $\{y_i\}$ preserves the
autocovariance structure of $\{x_i\}$ can be weakened by assuming
that for all $i,$
$$\sum_{k\in\widetilde{N}_i}\E x_{i}x_{k}'=\sum_{k\in\widetilde{N}_i}\E y_{i}y_{k}'.$$
Thus $\{y_i\}$ is allowed to be a sequence of independent
(mean-zero) $p$-dimensional Gaussian random variables such that
$\cov(y_i)=\sum_{k\in\widetilde{N}_i}\E x_{i}x_{k}'$ (provided that
$\sum_{k\in\widetilde{N}_i}\E x_{i}x_{k}'$ is positive-definite). }
\end{remark}

\begin{remark}
{\rm The arguments in the proof of Proposition~\ref{prop1} allow us
to derive a non-asymptotic upper bound on $\E|m^*(X)-m^*(Y)|$ for a
more general function $m^*(\cdot)$ on the high dimensional vector
sum (after some suitable componentwise transformation); see Section
\ref{subsec:extension}. Such general results are potentially useful
in studying higher criticism test (\cite{zcx2013}); see Example
\ref{example:general-g} and Remark~\ref{os:rem}.}
\end{remark}

\subsection{Dependence structure \Rmnum{1}: $M$-dependent time series}\label{subsec:m-dep}
This subsection is devoted to the analysis of $M$-dependent time
series, which fits in the framework of dependency graph. Here, we
allow $M$ to grow slowly with the sample size $n.$ Using the
arguments in the proof of Proposition \ref{prop1}, we obtain the
following result for $M$-dependent ({\em unnecessarily stationary})
sequence.

\begin{corollary}\label{corollary:m-dep}
When $\{x_i\}$ is a $M$-dependent sequence, under the assumption
that $2\sqrt{5}\beta(6M+1)M_{xy}/\sqrt{n}\leq 1,$ we have
\begin{equation}\label{eq:m-dep1}
\begin{split}
|\E [m(X)-m(Y)]|\lesssim &
(G_3+G_2\beta+G_1\beta^2)\frac{(2M+1)^{2}}{\sqrt{n}}(\bar{m}_{x,3}^3+\bar{m}_{y,3}^3)
\\&+(G_2+G_1\beta)\phi(M_x,M_y)+G_1\Delta+G_0\E[1-\mathcal{I}].
\end{split}
\end{equation}
\end{corollary}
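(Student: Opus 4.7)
The plan is to rerun the proof of Proposition~\ref{prop1} with the dependency graph specialized to $M$-dependence, exploiting the fact that the resulting graph is an interval graph so that nested neighborhoods grow only linearly in depth. For $M$-dependent sequences the first-order neighborhood $\widetilde{N}_i$ has size $2M+1=D_n$, but the ``depth-two'' neighborhood $\bigcup_{j\in\widetilde{N}_i}\widetilde{N}_j$ is an interval of size at most $4M+1$, and the ``depth-three'' neighborhood $\bigcup_{j\in\widetilde{N}_i}\bigcup_{k\in\widetilde{N}_j}\widetilde{N}_k$ is an interval of size at most $6M+1$. The hypothesis $2\sqrt{5}\beta(6M+1)M_{xy}/\sqrt{n}\leq 1$ is exactly the replacement of the general condition $2\sqrt{5}\beta D_n^3 M_{xy}/\sqrt{n}\leq 1$ from Proposition~\ref{prop1} with $D_n^3$ replaced by the actual size of the depth-three neighborhood.

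First I would trace the Slepian interpolation / leave-one-block-out argument of Proposition~\ref{prop1} and isolate where the $D_n^3(m_{x,3}^3+m_{y,3}^3)/\sqrt{n}$ term originates. It arises from the third-order Taylor remainder in the interpolation, which produces a triple sum over indices $(i,j,k)$ with $j\in\widetilde{N}_i$ and $k\in\widetilde{N}_j$, weighted by a product of three truncated coordinates. For a general graph one can only bound this by $nD_n^2$ triples multiplied by the coordinatewise maximum moment $m_{x,3}^3$, and the same reasoning forces the stronger truncation condition $\beta D_n^3 M_{xy}/\sqrt{n}\lesssim 1$ in order to keep the remainder dominated by the quadratic term.

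Second, I would exploit the interval structure to avoid taking coordinate-wise maxima. Because each triple $(i,j,k)$ lies in a window of length at most $6M+1$ around $i$, the sum can be reorganized so that, after fixing $i$, the inner sum runs over $(j,k)$ in at most $(2M+1)^2$ positions, and the third factor is averaged across that window. This replaces $m_{x,3}^3$ by the coordinatewise-average moment $\bar{m}_{x,3}^3$ and collapses the combinatorial factor from $(2M+1)^3$ to $(2M+1)^2$. The depth-three contribution is therefore absorbed into a single term of the form $(2M+1)^2\bar{m}_{x,3}^3/\sqrt{n}$, which is exactly the leading term in the corollary. The weakened truncation condition has the same origin: the Taylor remainder at a single index $i$ is a sum over at most $6M+1$ nearby indices, so we only need $2\sqrt{5}\beta(6M+1)M_{xy}/\sqrt{n}\leq 1$ to control it.

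The remaining ingredients -- the truncation error $\phi(M_x,M_y)$, the self-normalization slack $\Delta$, the indicator $\mathcal{I}$, and the dependence on $G_0,G_1,G_2,G_3$ -- appear in Proposition~\ref{prop1} in a form that does not touch the depth-three combinatorics, so they pass through unchanged. The main obstacle is the bookkeeping in the second step: one must verify that every occurrence of an iterated neighborhood in the proof of Proposition~\ref{prop1} can indeed be replaced by an interval window of length $4M+1$ or $6M+1$, with no hidden product of $D_n$ factors creeping in from a triple summation over components of $\widetilde{x}_i$. Once this combinatorial reorganization is performed carefully, the bound in the corollary falls out directly from the estimates of Proposition~\ref{prop1}.
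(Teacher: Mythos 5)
Your proposal is correct and follows essentially the same route as the paper: the paper's proof of Corollary~\ref{corollary:m-dep} likewise observes that for $M$-dependence the iterated neighborhoods are intervals with $|\mathcal{N}_i\cup\widetilde{N}_i|\leq 4M+1$ and $|\mathfrak{N}_i\cup\mathcal{N}_i\cup\widetilde{N}_i|\leq 6M+1$, so that $\max_{l}\bar{\E}|\mathcal{V}^{(i)}_l(t)|^3\lesssim D_n^3(\bar{m}_{x,3}^3+\bar{m}_{y,3}^3)/n^{3/2}$ and hence $I_{22}\lesssim (G_3+G_2\beta+G_1\beta^2)D_n^2(\bar{m}_{x,3}^3+\bar{m}_{y,3}^3)/\sqrt{n}$, with the hypothesis $2\sqrt{5}\beta(6M+1)M_{xy}/\sqrt{n}\leq 1$ playing exactly the role you identify. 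The only cosmetic difference is that the paper carries out your ``averaging over the window'' step via H\"older's inequality applied to the three cube-root factors with $\bar{\E}$ in place of $\E\max$, but this is the same mechanism you describe.
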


Let $n=(N+M)r$, where $N\geq M$ and $N,M,r\rightarrow +\infty$ as
$n\rightarrow+\infty.$ Define the block sums
\begin{equation}\label{eq:AB}
A_{ij}=\sum^{iN+(i-1)M}_{l=iN+(i-1)M-N+1}x_{lj},\quad
B_{ij}=\sum^{i(N+M)}_{l=i(N+M)-M+1}x_{lj}.
\end{equation}
It is not hard to see that $\{A_{ij}\}^{r}_{i=1}$ and
$\{B_{ij}\}^{r}_{i=1}$ with $1\leq j\leq p$ are two sequences of
i.i.d random variables. Let $V_{nj}=\sqrt{V_{1nj}^2+V_{2nj}^2}$ with
$V_{1nj}^2=\sum^{r}_{i=1}A_{ij}^2$ and
$V_{2nj}^2=\sum^{r}_{i=1}B_{ij}^2$. By generalizing Theorem 2.16 of
de la Pe\~{n}a et al (2009), we obtain the following lemma.
\begin{lemma}\label{lemma:self}
Suppose $\{x_i\}$ is a $p$-dimensional $M$-dependent sequence.
Assume that there exist $a_j,b_j>0$ such that
\begin{align*}
P\left(\sum^{n}_{i=1}x_{ij}>a_j\right)\leq 1/4,\quad
P(V_{nj}^2>b^2_j)\leq 1/4.
\end{align*}
Then we have
\begin{equation}
P\left(\left|\sum^{n}_{i=1}x_{ij}\right|\geq
x(a_j+b_j+V_{nj})\right)\leq 8\exp(-x^2/8),
\end{equation}
for any $1\leq j\leq p.$ In particular, we can choose $b^2_j=4\E
V_{nj}^2$ and $a^2_j=2b^2_j=8\E V_{nj}^2$.
\end{lemma}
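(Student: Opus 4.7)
The plan is to reduce this $M$-dependent self-normalized concentration inequality to the classical independent-data version, namely Theorem 2.16 of de la Pe\~na, Lai, and Shao (2009), by exploiting the big-block/small-block structure already built into the definitions of $A_{ij}$ and $B_{ij}$. Since successive big blocks are separated by gaps of length $M$, the $M$-dependence of $\{x_i\}$ makes $\{A_{ij}\}_{i=1}^{r}$ mutually independent, and by the same token $\{B_{ij}\}_{i=1}^{r}$ mutually independent. Thus the full partial sum decomposes as $\sum_{i=1}^{n}x_{ij}=S_{A,j}+S_{B,j}$ with $S_{A,j}:=\sum_{i=1}^{r}A_{ij}$ and $S_{B,j}:=\sum_{i=1}^{r}B_{ij}$ each a sum of independent random variables, while the normalizer splits as $V_{nj}^{2}=V_{1nj}^{2}+V_{2nj}^{2}$.

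The key step is the following set inclusion. From $V_{nj}^{2}=V_{1nj}^{2}+V_{2nj}^{2}\geq\max(V_{1nj},V_{2nj})^{2}$ we get $V_{nj}\geq(V_{1nj}+V_{2nj})/2$, so
\begin{equation*}
x(a_{j}+b_{j}+V_{nj})\;\geq\;\tfrac{x}{2}(a_{j}+b_{j}+V_{1nj})+\tfrac{x}{2}(a_{j}+b_{j}+V_{2nj}).
\end{equation*}
Combined with $|S_{A,j}+S_{B,j}|\leq|S_{A,j}|+|S_{B,j}|$, this yields
\begin{equation*}
\bigl\{|\textstyle\sum_{i=1}^{n}x_{ij}|\geq x(a_{j}+b_{j}+V_{nj})\bigr\}\;\subseteq\;\bigl\{|S_{A,j}|\geq\tfrac{x}{2}(a_{j}+b_{j}+V_{1nj})\bigr\}\cup\bigl\{|S_{B,j}|\geq\tfrac{x}{2}(a_{j}+b_{j}+V_{2nj})\bigr\}.
\end{equation*}
I then apply the independent-case Theorem 2.16 of de la Pe\~na et al.\ (2009) to the independent sequences $\{A_{ij}\}$ and $\{B_{ij}\}$ at level $x/2$; each application gives a bound of the form $4\exp(-x^{2}/8)$, and the union bound produces the claimed $8\exp(-x^{2}/8)$.

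To invoke the independent-case theorem I need the $1/4$ tail conditions at the \emph{block} level. Since $V_{1nj},V_{2nj}\leq V_{nj}$, the hypothesis $P(V_{nj}^{2}>b_{j}^{2})\leq 1/4$ transfers immediately, but the corresponding statement for $S_{A,j}$ and $S_{B,j}$ is the genuine obstacle, because the hypothesis $P(\sum_{i}x_{ij}>a_{j})\leq 1/4$ only controls the full sum. This is where the ``in particular'' choice $a_{j}^{2}=2b_{j}^{2}=8\E V_{nj}^{2}$ earns its keep, via Chebyshev: $\var(S_{A,j})=\E V_{1nj}^{2}\leq\E V_{nj}^{2}=a_{j}^{2}/8$ gives $P(|S_{A,j}|>a_{j})\leq 1/8\leq 1/4$, and likewise for $S_{B,j}$. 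The same choice simultaneously validates the lemma's own hypotheses: $P(V_{nj}^{2}>4\E V_{nj}^{2})\leq 1/4$ by Markov, and $\var(S_{A,j}+S_{B,j})\leq 2\var(S_{A,j})+2\var(S_{B,j})\leq 2\E V_{nj}^{2}$ combined with Chebyshev delivers $P(|\sum_{i}x_{ij}|>a_{j})\leq 1/4$. Thus the single pair $(a_{j},b_{j})$ does double duty, verifying both the lemma's premises and the premises of the independent-data theorem applied on each sub-block sum, and the proof closes by the union bound outlined above.
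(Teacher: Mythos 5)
Your reduction has the right skeleton---block independence, a split of the threshold between the $A$- and $B$-block sums, and Chebyshev to supply the block-level $1/4$ conditions---but the step ``each application gives a bound of the form $4\exp(-x^2/8)$'' is where the argument breaks. The independent-case Theorem 2.16 has the same $\exp(-y^2/8)$ rate that the present lemma asserts; applying it to $\{A_{ij}\}$ at level $y=x/2$ therefore gives $P(S_{A,j}\geq \tfrac{x}{2}(a_j+b_j+V_{1nj}))\leq 4\exp(-x^2/32)$, not $4\exp(-x^2/8)$. Moreover your events are two-sided ($|S_{A,j}|$, $|S_{B,j}|$), so each block costs $8\exp(-x^2/32)$ and the union bound delivers $16\exp(-x^2/32)$. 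That is still a sub-Gaussian tail and would suffice for the paper's downstream union bounds after adjusting constants, but it does not prove the stated inequality. The reason the paper gets the sharper exponent is that it pays the level-halving only once: it first symmetrizes the \emph{full} sum against an independent copy $\{\xi_{ij}\}$, using the lemma's two $1/4$ hypotheses to ensure $P(\sum_i\xi_{ij}\leq a_j,\,V'_{nj}\leq b_j)\geq 1/2$, which reduces the problem to bounding $P(\sum_i(x_{ij}-\xi_{ij})\geq xV^*_{nj})$ with $V^*_{nj}\leq V_{nj}+V'_{nj}$; only \emph{inside this symmetric reduction} does it split into the $A$ and $B$ sums at level $x/2$ and apply Theorem 2.15 (the symmetric-case bound $P(S\geq yV)\leq e^{-y^2/2}$), so that $y=x/2$ lands exactly at $e^{-x^2/8}$. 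Your route black-boxes Theorem 2.16, whose own proof already contains a level-halving, and then halves again.

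A second, smaller issue: your argument establishes the lemma only for the particular choice $a_j^2=2b_j^2=8\E V_{nj}^2$, since the block-level conditions $P(S_{A,j}>a_j)\leq 1/4$ and $P(S_{B,j}>a_j)\leq 1/4$ required to invoke Theorem 2.16 do not follow from the lemma's hypothesis $P(\sum_i x_{ij}>a_j)\leq 1/4$ for general $(a_j,b_j)$. You acknowledge this, and the paper only ever uses the lemma with that choice, so this is a limitation of scope rather than an error; the paper's symmetrization argument, by contrast, works directly from the stated hypotheses on the full sum.
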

\noindent It is worth noting that Lemma \ref{lemma:self} holds
without the stationarity assumption. This lemma is particularly
useful in controlling the last two terms in (\ref{eq:m-dep1}).

Throughout the rest of this subsection, we consider the case where
$\{x_i\}$ is a $M$-dependent \emph{stationary} time series. Define
$\gamma_{x,jk}(l)=\E x_{1j}x_{(1+l)k}$ for $l\geq 0$ and
$\gamma_{x,jk}(l)=\gamma_{x,kj}(-l)$ for $l<0$, where $1\leq j,k\leq
p.$ Let
$\sigma_{j,k}^{(n)}:=\sigma_{j,k}^{(n)}(M)=\sum^{n-1}_{l=1-n}(n-|l|)\gamma_{x,jk}(l)/n$,
$\sigma_{j,k}:=\sigma_{j,k}(M)=\sum^{+\infty}_{l=-\infty}\gamma_{x,jk}(l)$
and
$\sigma^2_{j}=\sigma_j^2(M)=\sum^{+\infty}_{l=-\infty}|\gamma_{x,jj}(l)|$.
Let $\varphi(M_x):=\varphi_{N,M}(M_x)$ be the smallest finite
constant which satisfies that uniformly for $j$,
\begin{equation}\label{eq:varphi}
\begin{split}
&E(A_{ij}-\breve{A}_{ij})^2\leq N\varphi^2(M_x)\sigma_j^2,\quad
E(B_{ij}-\breve{B}_{ij})^2\leq M\varphi^2(M_x)\sigma_j^2,
\end{split}
\end{equation}
where $\breve{A}_{ij}$ and $\breve{B}_{ij}$ are the truncated
versions of $A_{ij}$ and $B_{ij}$ defined as follows:
\begin{align*}
&\breve{A}_{ij}=\sum^{iN+(i-1)M}_{l=iN+(i-1)M-N+1}(x_{lj}\wedge
M_x)\vee (-M_x),\\
&\breve{B}_{ij}=\sum^{i(N+M)}_{l=i(N+M)-M+1}(x_{lj}\wedge M_x)\vee
(-M_x).
\end{align*}
Similarly, we can define the quantity $\varphi(M_y)$ for the
Gaussian sequence $\{y_i\}$. Set
$\varphi(M_{x},M_y)=\varphi(M_x)\vee\varphi(M_y)$. Further let
$u_x(\gamma)$ and $u_y(\gamma)$ be the smallest quantities such that
\begin{equation}\label{eq:gamma}
P\left(\max_{1\leq i\leq n}\max_{1\leq j\leq p}|x_{ij}|\leq
u_x(\gamma)\right)\geq 1-\gamma,\quad P\left(\max_{1\leq i\leq
n}\max_{1\leq j\leq p}|y_{ij}|\leq u_y(\gamma)\right)\geq 1-\gamma.
\end{equation}

Building on the above results, we are ready to derive an upper bound
for $\rho_n$. To this end, consider a ``smooth'' indicator function
$g_0\in C^3(\mathbb{R}): \mathbb{R}\rightarrow [0,1]$ such that
$g_0(s)=1$ for $s\leq 0$ and $g_0(s)=0$ for $s\geq 1.$ Fix any
$t\in\mathbb{R}$ and define $g(s)=g_0(\psi(s-t-e_{\beta}))$ with
$e_{\beta}=\beta^{-1}\log p$. For this function $g$, $G_0=1$,
$G_1\lesssim\psi$, $G_2\lesssim\psi^2$ and $G_3\lesssim\psi^3$.
Here, $\psi$ is a smoothing parameter we will choose carefully in
the proof. Corollary \ref{corollary:m-dep} and Lemma
\ref{lemma:self} imply the following result.
\begin{theorem}\label{thm1}
Consider a $M$-dependent stationary time series $\{x_i\}$. Suppose
$2\sqrt{5}\beta(6M+1)M_{xy}/\sqrt{n}\leq 1$ with
$M_{xy}=\max\{M_x,M_y\}$, and $M_x>u_x(\gamma)$ and
$M_y>u_y(\gamma)$ for some $\gamma\in (0,1)$. Further suppose that
there exist constants $0<c_1<c_2$ such that $c_1<\min_{1\leq j\leq
p}\sigma_{j,j}^{(n)}\leq \max_{1\leq j\leq p}\sigma_{j,j}^{(n)}<c_2$
uniformly holds for all large enough $n$, $M$ and $p$. Then for any
$\psi>0$,
\begin{align*}
\rho_n=&\sup_{t\in\mathbb{R}}|P(T_X\leq t)-P(T_Y\leq t)|
\\ \lesssim &
(\psi^2+\psi\beta)\phi(M_{x},M_y)+(\psi^3+\psi^2\beta+\psi\beta^2)\frac{(2M+1)^{2}}{\sqrt{n}}(\bar{m}_{x,3}^3+\bar{m}_{y,3}^3)
\\&+\psi\varphi(M_{x},M_y)\sigma_j\sqrt{8\log(p/\gamma)}+\gamma+(e_{\beta}+\psi^{-1})\sqrt{1\vee\log (p\psi)}.
\end{align*}
\end{theorem}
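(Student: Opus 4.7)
The plan is to combine the smoothing argument of Chernozhukov--Chetverikov--Kato with the $M$-dependent Gaussian approximation bound of Corollary 3.1, then absorb the truncation error using the self-normalization Lemma 3.2, and finally pay for the smoothing bias via Gaussian anti-concentration on the $Y$-side.

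First I would write, for the specific smooth indicator $g(s)=g_0(\psi(s-t-e_\beta))$, the two-sided sandwich
\begin{equation*}
\mathbf{1}\{T_X\le t\}\;\le\; g(F_\beta(X))\;\le\;\mathbf{1}\{T_X\le t+e_\beta+\psi^{-1}\},
\end{equation*}
which follows from $T_X\le F_\beta(X)\le T_X+e_\beta$ together with the plateau/cutoff shape of $g_0$. The identical sandwich holds for $Y$. Taking expectations and rearranging yields
\begin{equation*}
P(T_X\le t)-P(T_Y\le t)\;\le\;\bigl|\E[m(X)-m(Y)]\bigr|\;+\;\bigl[P(T_Y\le t+e_\beta+\psi^{-1})-P(T_Y\le t)\bigr],
\end{equation*}
and symmetrically for the other direction. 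Thus $\rho_n$ is controlled by (a) a Gaussian approximation term, and (b) a Gaussian anti-concentration term.

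For (a) I would plug $g$ into Corollary 3.1. Since $G_0=1$, $G_1\lesssim\psi$, $G_2\lesssim\psi^2$, $G_3\lesssim\psi^3$, the bound reads
\begin{equation*}
|\E[m(X)-m(Y)]|\lesssim(\psi^2{+}\psi\beta)\phi(M_x,M_y)+(\psi^3{+}\psi^2\beta{+}\psi\beta^2)\tfrac{(2M+1)^2}{\sqrt n}(\bar m_{x,3}^3+\bar m_{y,3}^3)+\psi\Delta+\E[1-\mathcal I].
\end{equation*}
Two residual pieces remain: the slack $\psi\Delta$ and the event probability $\E[1-\mathcal I]$, which must be chosen/balanced against the desired form of the final bound.

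To handle $\E[1-\mathcal I]$, I would exploit the choices $M_x>u_x(\gamma)$, $M_y>u_y(\gamma)$ together with the self-normalization Lemma 3.2 applied to the sequence of truncation errors $\{x_{ij}-\widetilde x_{ij}\}_i$ (and analogously for $y$). On the high-probability event $\{\max_{i,j}|x_{ij}|\le M_x\}$ (which occurs with probability $\ge 1-\gamma$) each truncation error reduces to a deterministic mean correction, and the stochastic part of $X_j-\widetilde X_j$ is driven by block sums whose variances, per hypothesis (3.15), are controlled by $N\varphi^2(M_x)\sigma_j^2$ and $M\varphi^2(M_x)\sigma_j^2$. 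Plugging these into Lemma 3.2, I would choose $x=\sqrt{8\log(p/\gamma)}$ and apply a union bound over $j\in\{1,\ldots,p\}$ to conclude
\begin{equation*}
\Delta\;\asymp\;\varphi(M_x,M_y)\,\sigma_j\sqrt{8\log(p/\gamma)}\qquad\text{forces}\qquad \E[1-\mathcal I]\lesssim\gamma.
\end{equation*}
This accounts for the third and fourth terms in the stated bound.

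Finally, for (b) I would invoke a Nazarov-type anti-concentration inequality for the maximum of a centered Gaussian vector with variances bounded below by $c_1$: this yields
\begin{equation*}
P(T_Y\le t+c)-P(T_Y\le t)\lesssim c\sqrt{1\vee\log(p/c)}
\end{equation*}
uniformly in $t$. Taking $c=e_\beta+\psi^{-1}$ and bounding $\log(p/c)\le\log(p\psi)$ (up to constants and the $1\vee$ convention) produces the last term $(e_\beta+\psi^{-1})\sqrt{1\vee\log(p\psi)}$. Combining these three contributions with a supremum over $t$ and a symmetric argument gives the stated bound for $\rho_n$. I expect the main obstacle to be the clean quantification of $\E[1-\mathcal I]$ in terms of $\varphi(M_x,M_y)$ and the $\sigma_j$: one must be careful that the blocking in Lemma 3.2 (which is tied to the $M$-dependence structure via $N$ and $M$) is compatible with the block-sum variance hypothesis (3.15) for the truncation errors, and that the resulting $\Delta$ does not introduce hidden dependence on $N$ that would spoil the final rate.
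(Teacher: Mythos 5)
Your proposal is correct and follows essentially the same route as the paper's proof: Corollary \ref{corollary:m-dep} applied to the smooth indicator $g(s)=g_0(\psi(s-t-e_\beta))$, Lemma \ref{lemma:self} with $x=\sqrt{8\log(p/\gamma)}$ and a union bound to set $\Delta\asymp\varphi(M_x,M_y)\sigma_j\sqrt{8\log(p/\gamma)}$ while forcing $\E[1-\mathcal I]\lesssim\gamma$, and Gaussian anti-concentration to pay for the smoothing gap $e_\beta+\psi^{-1}$. The compatibility worry you raise at the end is resolved exactly as you suspect: the normalization of $\varphi$ in (\ref{eq:varphi}) by $N$ and $M$ cancels against $Nr/n\le 1$ and $Mr/n\le 1$, so no hidden $N$-dependence enters $\Delta$.
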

\noindent We point out that the stationarity assumption is
non-essential in the proof of Theorem \ref{thm1}.

To characterize the dependence of $M$-dependent time series, we adopt the
idea of viewing the weakly dependent time series as outputs on
inputs in physical systems \cite{wu2005}. This framework is very
general and easy to verify for specific (linear or nonlinear)
data-generating mechanism; see \cite{wu2011}. With some abuse of
notation, let $\epsilon_i$ be a sequence of mean-zero i.i.d random
variables. Consider a physical system
$\mathcal{G}(\dots,\epsilon_{i-1},\epsilon_{i})$, where
$\{\epsilon_i\}$ are the inputs and
$\mathcal{G}=(\mathcal{G}_1,\dots,\mathcal{G}_p)'$ is a
($p$-dimensional) measurable function such that its output is well
defined. Define the sigma field
$\mathcal{F}_M(i)=\sigma(\epsilon_{i-M},\epsilon_{i-M+1},\dots,\epsilon_i)$
with $M\geq 0.$ We suppose the $M$-dependent sequence $\{x_i\}$ has
the following representation (also see the discussions in the next
subsection),
\begin{align*}
x_i:=x_i^{(M)}=\E[\mathcal{G}(\dots,\epsilon_{i-1},\epsilon_{i})|\mathcal{F}_M(i)]:=\mathcal{G}^{(M)}(\epsilon_{i-M},\epsilon_{i-M+1},\dots,\epsilon_i).
\end{align*}
For any $l\in\mathbb{N}$, let
$x_i^{(l-1)}=\E[x_i|\epsilon_{i+1-l},\dots,\epsilon_i]
=\E[\mathcal{G}(\dots,\epsilon_{i-1},\epsilon_{i})|\mathcal{F}_{l-1}(i)]$
for $l\leq M$, and $x_i^{(l-1)}=x_i$ for $l>M$. By construction,
$x_{1j}$ and $x_{(1+l)k}^{(l-1)}$ are independent for any $1\leq
j,k\leq p$.

Let $h:[0,+\infty)\rightarrow [0,+\infty)$ be a convex and strictly
increasing function with $h(0)=0$. Denote by $h^{-1}(\cdot)$ the
inverse function of $h(\cdot).$ Let
$l_n:=l_n(p,\gamma)=\log(pn/\gamma)\vee 1$.

\begin{assumption}\label{assum:tail}
Suppose one of the following two conditions holds: (\rmnum{1}) $\E
h(\max_{1\leq j\leq p}|x_{ij}|/\mathfrak{D}_n)\leq 1$ with
$\mathfrak{D}_n>0$, and
\begin{align}
n^{3/8}M^{-1/2}l_n^{-5/8}\geq
C_1\max\{\mathfrak{D}_nh^{-1}(n/\gamma),l_n^{1/2}\},\quad
n^{7/4}M^{-1}l_n^{-9/4}\geq C_2N, \label{eq:h}
\end{align}
for some constants $C_1,C_2>0$; (\rmnum{2}) $\max_{1\leq j\leq p}\E
\exp(|x_{ij}|/\mathfrak{D}_n)\leq 1$ with $\mathfrak{D}_n>0$, and
\begin{align}
n^{3/8}M^{-1/2}l_n^{-5/8}\geq
C_3\max\{\mathfrak{D}_nl_n,l_n^{1/2}\},\quad
n^{7/4}M^{-1}l_n^{-9/4}\geq C_4N, \label{eq:h2}
\end{align}
for some constants $C_3,C_4>0$.
\end{assumption}

\begin{theorem}\label{thm2}
Assume that there exist constants $c_1,c_2,c_3>0$ such that
\begin{align*}
&c_1<\min_{1\leq j\leq p}\sigma_{j,j}^{(n)}\leq \max_{1\leq j\leq
p}\sigma_{j,j}^{(n)}<c_2,\quad \max_{1\leq j\leq p}\sigma_j^2<c_3,
\end{align*}
uniformly for all large enough $M,p$, and
\begin{align}
&\limsup_{p}\max_{1\leq k\leq p}\E
|\mathcal{G}_k(\dots,\epsilon_{i-1},\epsilon_{i})|^4<\infty,
\label{eq:condition}\\
&\limsup_{M,p}\max_{1\leq k\leq
p}\sum^{M}_{l=1}(\E|(x_{(1+l)k}-x_{(1+l)k}^{(l-1)})|^3)^{1/3}<\infty.
\label{eq:m-dep-couple}
\end{align}
Condition (\ref{eq:m-dep-couple}) also holds for $\{y_i\}$. Then
under Assumption \ref{assum:tail}, we have
\begin{align}
\rho_n=&\sup_{t\in\mathbb{R}}|P(T_X\leq t)-P(T_Y\leq t)|\lesssim
n^{-1/8}M^{1/2}l_n^{7/8}+\gamma.
\end{align}
\end{theorem}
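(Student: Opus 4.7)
\textbf{Proof proposal for Theorem \ref{thm2}.}

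The plan is to apply Theorem \ref{thm1} after controlling the auxiliary quantities $\phi(M_x,M_y)$, $\varphi(M_x,M_y)$, $\bar m_{x,3}$, $\bar m_{y,3}$ in terms of the truncation level, and then optimize over $\psi$, $\beta$, $M_x$, $M_y$. First I would set the targets: the bound of Theorem \ref{thm1} contains six kinds of terms, and I need each to be $\lesssim n^{-1/8}M^{1/2}l_n^{7/8}+\gamma$. Conditions \eqref{eq:condition} and \eqref{eq:m-dep-couple} immediately give $\bar m_{x,3}, \bar m_{y,3}=O(1)$ (via Jensen from the fourth moment), and combined with the coupling bound one verifies $\max_j\sigma_j^2<c_3$, so these constants disappear from the analysis. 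The Gaussian counterpart $\{y_i\}$ inherits bounded moments from its prescribed covariance structure.

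Next I would bound $\phi$ and $\varphi$ by a truncation argument. For any pair $(i,l)$ with $l\in\widetilde N_i$, writing
\[
\E x_{ij}x_{lk}-\E\widetilde x_{ij}\widetilde x_{lk}
=\E(x_{ij}-\widetilde x_{ij})x_{lk}+\E\widetilde x_{ij}(x_{lk}-\widetilde x_{lk}),
\]
Cauchy--Schwarz and the bound $\E x_{ij}^2\mathbf 1_{\{|x_{ij}|>M_x\}}\leq \E x_{ij}^4/M_x^2\lesssim 1/M_x^2$ (which uses \eqref{eq:condition}) give a pointwise bound $O(1/M_x^2)$. Since $|\widetilde N_i|\leq 2M+1$ under $M$-dependence, this yields $\phi(M_x)\lesssim M/M_x^2$. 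For $\varphi(M_x)$, I would expand $\E(A_{ij}-\breve A_{ij})^2$; the $M$-dependent structure makes only $O(NM)$ covariance terms nonzero, each of size $O(1/M_x^2)$, so $\varphi(M_x)\lesssim M^{1/2}/M_x$. Identical bounds hold for $\{y_i\}$. Finally, Markov applied to Assumption \ref{assum:tail}(i) (or the exponential version in (ii)) together with a union bound gives $u_x(\gamma)\lesssim \mathfrak D_n h^{-1}(n/\gamma)$ (resp.\ $\mathfrak D_n l_n$), so I may set $M_x\asymp\mathfrak D_n h^{-1}(n/\gamma)$ which is admissible under \eqref{eq:h}.

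With these bounds in hand, I would choose
\[
\psi\asymp n^{1/8}M^{-1/2}l_n^{-3/8},\qquad \beta\asymp n^{1/8}M^{-1/2}l_n^{5/8}.
\]
The balancing calculation is routine: $\psi^{-1}\sqrt{l_n}$, $e_\beta\sqrt{l_n}=\beta^{-1}\log p\cdot\sqrt{l_n}$, and $\psi\beta^2(2M+1)^2/\sqrt n$ are all of order $n^{-1/8}M^{1/2}l_n^{7/8}$, while $\psi^3 M^2/\sqrt n$ and $\psi^2\beta M^2/\sqrt n$ are strictly smaller. The constraint $2\sqrt 5\beta(6M+1)M_{xy}/\sqrt n\leq 1$ reduces exactly to \eqref{eq:h} (first inequality), and the side condition $n^{7/4}M^{-1}l_n^{-9/4}\geq C_2 N$ is what allows the block size $N$ to be taken large enough that the implicit $\sigma^{(n)}_{j,j}\asymp\sigma_{j,j}$ equivalence used in Theorem \ref{thm1} is preserved. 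For the two $\phi$-terms and the $\varphi$-term, the choices $M_x\lesssim n^{3/8}M^{-1/2}l_n^{-5/8}$ yield $\psi\beta\phi\lesssim Ml_n^{3/2}/\sqrt n$ and $\psi\varphi\sqrt{l_n}\lesssim n^{1/8}l_n^{1/8}/M_x$, both within the target by \eqref{eq:h}--\eqref{eq:h2}. Collecting these estimates and absorbing the residual $\gamma$ from $u_x(\gamma)$ and from the indicator $\E[1-\mathcal I]$ yields the claimed rate.

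The main obstacle, in my view, is not any single bound but the bookkeeping: several parameters ($\psi$, $\beta$, $M_x$, $M_y$, $N$, $\Delta$) must be chosen simultaneously to respect (a) the smoothing constraint $\beta D_n M_{xy}/\sqrt n\leq 1$, (b) the validity of the self-normalization Lemma \ref{lemma:self} (which is what ultimately produces the factor $\sqrt{\log(p/\gamma)}$ in the $\varphi$-term), and (c) the high-probability truncation event $\{M_x>u_x(\gamma)\}$. Verifying that Assumption \ref{assum:tail} is precisely the assumption that makes these compatible at the chosen scaling is the crucial step; once this is in place, the approximation of $\mathcal G$ by its $M$-dependent projection $x_i^{(M)}$ via \eqref{eq:m-dep-couple} only enters through the boundedness of $\sigma_j$ and hence creates no further difficulty.
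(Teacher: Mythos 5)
Your overall strategy coincides with the paper's: feed bounds on $\phi$, $\varphi$ and the third moments into Theorem \ref{thm1}, choose $\psi\asymp n^{1/8}M^{-1/2}l_n^{-3/8}$, $M_x=M_y\asymp n^{3/8}M^{-1/2}l_n^{-5/8}$ and $\beta\asymp\sqrt{n}/(M_xM)$, and check each term against $n^{-1/8}M^{1/2}l_n^{7/8}$; the parameter choices and the balancing of the $\psi\beta^2M^2/\sqrt n$, $e_\beta$ and $\psi^{-1}$ terms all match. But two of your auxiliary bounds are off. First, the pointwise bound underlying $\phi$: for the cross term $\E(x_{ij}-\widetilde x_{ij})x_{lk}$, Cauchy--Schwarz with a bounded fourth moment gives only $\|x_{ij}-\widetilde x_{ij}\|_2\|x_{lk}\|_2\lesssim 1/M_x$, not $1/M_x^2$ (the $1/M_x^2$ rate applies to $\E x_{ij}^2\mathbf 1\{|x_{ij}|>M_x\}$, i.e.\ to the \emph{squared} $L^2$ norm of the truncation error). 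Your $\phi(M_x)\lesssim M/M_x^2$ should therefore read $M/M_x$; as it happens, $\psi\beta\cdot M/M_x=\psi\sqrt n/M_x^2\asymp n^{-1/8}M^{1/2}l_n^{7/8}$, so the corrected crude bound still closes, but this is then a genuinely different route from the paper, which exploits the independence of $x_{1j}$ and $x_{(1+l)k}^{(l-1)}$ together with (\ref{eq:m-dep-couple}) to make the sum over the $2M+1$ neighbours summable and obtain $\phi(M_x,M_y)\lesssim 1/M_x+1/M_y$ with no factor of $M$.

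Second, and more seriously, your bound $\varphi(M_x)\lesssim M^{1/2}/M_x$ drops the truncation bias. The $\breve A_{ij}$ in the definition of $\varphi$ are built from the \emph{uncentered} truncation $(x_{lj}\wedge M_x)\vee(-M_x)$, so $\E(A_{ij}-\breve A_{ij})^2$ contains the squared mean $N^2\{\E[(x_{1j}\wedge M_x)\vee(-M_x)]\}^2\lesssim N^2/M_x^6$, forcing $\varphi(M_x)\gtrsim\sqrt N/M_x^3$ in addition to your $M^{1/2}/M_x$. The resulting contribution $\psi\sqrt N\,l_n^{1/2}/M_x^3\asymp n^{-1}MN^{1/2}l_n^2$ is controlled only through the second inequality of (\ref{eq:h}), $n^{7/4}M^{-1}l_n^{-9/4}\geq C_2N$ --- that inequality exists for exactly this purpose, not for the ``$\sigma_{j,j}^{(n)}\asymp\sigma_{j,j}$ equivalence'' you invoke (no such equivalence is used; $\sigma_{j,j}^{(n)}$ being bounded away from $0$ and $\infty$ is a direct hypothesis of Theorems \ref{thm1} and \ref{thm2}). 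For $N$ near its allowed maximum the bias term dominates $M^{1/2}/M_x$ and cannot be discarded. Once these two points are repaired, your argument goes through and essentially reproduces the paper's proof, modulo the cruder (but here still sufficient) treatment of $\phi$.
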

Suppose $\E (\max_{1\leq j\leq p}|x_{ij}|/\mathfrak{D}_n)^4\leq 1$.
Then with $p\lesssim \exp(n^b)$, $M\asymp N\lesssim n^{b'}$,
$\gamma\asymp n^{-(1-4b'-7b)/8}=o(1)$, and $\mathfrak{D}_n\lesssim
n^{(3-12b'-13b)/32}$, we have Condition (\rmnum{1}) in Assumption
\ref{assum:tail} holds with $h(x)=x^4$, and
\begin{equation}\label{dep1:rhon}
\rho_n\lesssim n^{-(1-4b'-7b)/8}.
\end{equation}
If Condition (\rmnum{2}) in Assumption \ref{assum:tail} holds, we
can still have (\ref{dep1:rhon}) when $\max_{1\leq j\leq p}\E
\exp(|x_{ij}|/\mathfrak{D}_n)\leq 1$, $p\lesssim \exp(n^b)$, $M\asymp
N\lesssim n^{b'}$, $\gamma\asymp n^{-(1-4b'-7b)/8}=o(1)$ and
$\mathfrak{D}_n\lesssim n^{(3-4b'-13b)/8}$.

When $b'=0$ (i.e. $M=O(1)$), our result allows $p=O(\exp(n^b))$ with
$b<1/7$, which is consistent with Corollary 2.1 in \cite{cck13} for
i.i.d random vectors (assuming that $B_n=O(1)$ therein).
\begin{remark}\label{rem:interp}
{\rm The sharpness of $\rho_n$ is not established in
Theorem~\ref{thm2}. However, the upper bound of $\rho_n$ given in
(\ref{dep1:rhon}) leads to two conjectures: (i) Gaussian
approximation becomes less accurate when the data vectors are more
dependent or the data dimension diverges at a faster rate; (ii) the
less dependent of the data vectors, the faster diverging rate of the
dimension is allowed for obtaining an accurate Gaussian approximation.
The above phenomena will also be observed for the weakly dependent
data in Section~\ref{subsec:weak-dep}. Interestingly, we will show
some empirical evidence of both conjectures in that section.}
\end{remark}

\begin{remark}\label{rem:xass}
{\rm Assumption \ref{assum:tail} and (\ref{eq:condition}) impose
tail restrictions on $\{x_{i}\}$. Condition (\ref{eq:m-dep-couple})
requires $\{x_i\}$ to be weakly dependent uniformly as $M$ grows,
and, in particular, (\ref{eq:m-dep-couple}) allows us to quantify
$\phi(M_x,M_y)$ and $\varphi(M_{x},M_y)$; see (\ref{eq:varphi}). }
\end{remark}
%\begin{remark}\label{rm:tail}
%{\rm  We also note that condition (\ref{eq:h}) can be replaced by
%$n^{3/8}M^{-1/2}l_n^{-5/8}\geq C_2\max\{\mathfrak{D}_nl_n,l_n^{1/2}\}$ provided
%that $\max_{1\leq j\leq p}\E \exp(|x_{ij}|/\mathfrak{D}_n)\leq 1.$ }
%\end{remark}

\subsection{Dependence structure \Rmnum{2}: Weakly dependent time series}\label{subsec:weak-dep}
In this subsection, we extend the results in Section
\ref{subsec:m-dep} to the weakly dependent case, i.e., $D_n=n+1$.
The key idea here is to approximate the weakly dependent time series
by a $M$-dependent time series, see the approximation error
(\ref{eq:m-approx}) below.

With slightly abuse of notation, suppose the sequence $\{x_i\}$ has
the following causal representation,
\begin{align}\label{eq:causal}
x_i:=x_i^{(\infty)}=\mathcal{G}(\dots,\epsilon_{i-1},\epsilon_{i}),
\end{align}
where $\mathcal{G}=(\mathcal{G}_1,\dots,\mathcal{G}_p)'$ is a
$p$-dimensional measurable function such that $x_i$ is well defined.
To measure the strength of dependence, we let $\{\epsilon_i'\}$ be
an i.i.d copy of $\{\epsilon_i\}$ and
$x_i^*=\mathcal{G}(\dots,\epsilon_{-1},\epsilon_0',\epsilon_1,\dots,\epsilon_i)$,
and define
\begin{equation}
\theta_{i,j,q}(x)=(\E|x_{ij}-x_{ij}^*|^q)^{1/q},\quad
\Theta_{i,j,q}(x)=\sum^{+\infty}_{l=i}\theta_{l,j,q}(x).
\end{equation}
In the subsequent discussions, we assume that the dependence measure
$\sup_{1\leq j\leq p}\Theta_{i,j,q}(x)<\infty$ for some $q>0$. Analogous quantity
$\theta_{i,j,q}(y)$ can be defined for the Gaussian sequence
$\{y_i\}$.

Let
$x_i^{(M)}=(x_{i1}^{(M)},\dots,x_{ip}^{(M)})'=\E[x_i|\mathcal{F}_M(i)]$
be the $M$-dependent approximation sequence for $\{x_i\}$. Define
$X^{(M)}$ in the same way as $X$ by replacing $x_i$ with
$x_i^{(M)}$. Because $|m(x)-m(y)|\leq 2G_0$ and $|m(x)-m(y)|\leq
G_1\max_{1\leq j\leq p}|x_j-y_j|$ (by the Lipschitz property of
$F_{\beta}$), we have
\begin{equation}\label{eq:M-approx-1}
\begin{split}
|\E[m(X)-m(X^{(M)})]| \leq&
|\E[(m(X)-m(X^{(M)}))\mathcal{I}_M]|+|\E[(m(X)-m(X^{(M)}))(1-\mathcal{I}_M)]|
\\ \lesssim& G_1\Delta_M+G_0\E[1-\mathcal{I}_M],
\end{split}
\end{equation}
where
$\mathcal{I}_M:=\mathcal{I}_{\Delta_M,M}=\mathbf{1}\{\max_{1\leq
j\leq p}|X_j-X_j^{(M)}|\leq \Delta_M\}$ for some $\Delta_M>0$
depending on $M$. Suppose $\max_{1\leq j\leq p}\E|x_{ij}|^q<\infty$
for some $q>0$. By Lemma A.1 of \cite{ll2009}, we have
\begin{align*}
(\E |X_j-X_j^{(M)}|^q)^{q'/q}\leq C_qn^{1-q'/2}
\Theta_{M,j,q}^{q'}(x),
\end{align*}
where $q'=\min(2,q)$ and $C_q$ is a positive constant depending on
$q$. For any $q\geq 2$, we obtain
\begin{align*}
\E[1-\mathcal{I}_M]  \leq & \sum^{p}_{j=1}P(|X_j-X_j^{(M)}|\geq
\Delta_M) \leq \sum^{p}_{j=1}\frac{1}{\Delta_M^q}\E
|X_j-X_j^{(M)}|^q
\\ \leq & \sum^{p}_{j=1}\frac{C^{q/2}_q\Theta_{M,j,q}^q(x)}{\Delta_M^q}=\sum^{p}_{j=1}\frac{C^{q/2}_q}{\Delta_M^q}\left(\sum^{+\infty}_{l=M}\theta_{l,j,q}(x)\right)^q.
\end{align*}
Optimizing the bound with respect to $\Delta_M$ in
(\ref{eq:M-approx-1}), we deduce that
\begin{align}\label{eq:m-approx}
|\E[m(X)-m(X^{(M)})]|\lesssim &
(G_0G_1^q)^{1/(1+q)}\left(\sum^{p}_{j=1}\Theta_{M,j,q}^q(x)\right)^{1/(1+q)},
\end{align}
which along with (\ref{eq:Fbeta}) implies that
\begin{align*}
|\E[g(T_X)-g(T_{X^{(M)}})]|\lesssim &
(G_0G_1^q)^{1/(1+q)}\left(\sum^{p}_{j=1}\Theta_{M,j,q}^q(x)\right)^{1/(1+q)}+\beta^{-1}G_1\log
p,
\end{align*}
with $T_{X^{(M)}}=\max_{1\leq j\leq
p}\sum^{n}_{i=1}x_{ij}^{(M)}/\sqrt{n}$.

We give an explicit expression of the approximation error
(\ref{eq:m-approx}) in the following two examples.
\begin{example}
Consider a stationary linear process,
\begin{align*}
x_{ij}=\sum^{+\infty}_{l=0}b_{lj}\epsilon_{(i-l)j},\quad 1\leq j\leq
p,
\end{align*}
where $\sum^{+\infty}_{l=0}|b_{lj}|<\infty$ and
$\epsilon_{i}=(\epsilon_{i1},\dots,\epsilon_{ip})'$ is a sequence of
i.i.d random variables. Simple calculation yields that
$X_j-X_j^{(M)}=\frac{1}{\sqrt{n}}\sum^{n}_{i=1}\sum^{+\infty}_{l=M+1}b_{lj}\epsilon_{(i-l)j}$
and
$\theta_{l,j,q}(x)=|b_{lj}|(\E|\epsilon_{0j}-\epsilon_{0j}'|^q)^{1/q}$.
For $q\geq2$, we have
\begin{align*}
|\E[m(X)-m(X^{(M)})]| \lesssim & (G_0G_1^q)^{1/(1+q)}\max_{1\leq
j\leq
p}(\E|\epsilon_{0j}-\epsilon_{0j}'|^q)^{1/(q+1)}\left(\sum^{p}_{j=1}\left(\sum^{+\infty}_{l=M}|b_{lj}|\right)^q\right)^{1/(q+1)}.
\end{align*}
Under the assumption that $\limsup_{p}\max_{1\leq j\leq
p}(E|\epsilon_{0j}|^q)^{1/q}<\infty$ and $b_{lj}=\rho^l$ with
$\rho<1$, we get
$$|\E[m(X)-m(X^{(M)})]|\lesssim  (G_0G_1^q)^{1/(1+q)}p^{1/(1+q)}\rho^{(qM)/(1+q)}.$$
\end{example}

\begin{example}
Consider a stationary Markov chain defined by an iterated random
function
\begin{align*}
x_i=H(x_{i-1},e_i).
\end{align*}
Here $e_i$'s are i.i.d. innovations, and $H(\cdot,\cdot)$ is an
$\mathbb{R}^p$-valued and jointly measurable function, which
satisfies the following two conditions: (\rmnum{1}) there exists
some $x_0$ such that $\E|H(x_0,e_0)|^{2q}<\infty$ and (\rmnum{2})
\begin{align*}
\rho:=\sup_{x\neq
x'}\frac{(\E|H(x,e_0)-H(x',e_0)|^{2q})^{1/(2q)}}{|x-x'|}<1,
\end{align*}
where $|\cdot|$ denotes the Euclidean norm for a $p$-dimensional
vector. Then it can be shown that $\{x_i\}$ has the geometric moment
contraction (GMC) condition property \cite{wushao2004} and
$\max_{1\leq j\leq p}\Theta_{m,j,2q}(x)=O(\rho^{m})$ (see Example
2.1 in \cite{cxw2013}). Hence
\begin{align*}
|\E[m(X)-m(X^{(M)})]|\lesssim
(G_0G_1^q)^{1/(1+q)}p^{1/(1+q)}\rho^{(qM)/(1+q)}.
\end{align*}
\end{example}

%\begin{proposition}
%Suppose $\{x_i\}$ is a stationary time series which admits the
%representation (\ref{eq:causal}). Assume that the
%assumptions in Theorem \ref{thm1} hold for the $M$-dependent
%sequence $\{x_i^{(M)}\}$. Then under the assumption that
%$\sum^{p}_{j=1}\Theta_{M,j,q}^q<\infty$ with $q\geq 2,$
%\begin{equation}
%\begin{split}
%|\E [m(X)-m(Y)]|\lesssim &
%(G_2+G_1\beta)\phi^{(M)}(M_{x},M_y)+(G_3+G_2\beta+G_1\beta^2)\frac{(2M+1)^{2}}{\sqrt{n}}(\bar{m}_{x,3}^3+\bar{m}_{y,3}^3)
%\\&+G_1\varphi^{(M)}(M_{x},M_y)\sigma_j^{(M)}\sqrt{8\log(p/\gamma)}+G_0\gamma+\left(\sum^{p}_{j=1}\Theta_{M,j,q}^q\right)^{1/(1+q)}.
%\end{split}
%\end{equation}
%where $\sigma_j^{(M)}$, $\phi^{(M)}(M_{x},M_y)$ and $\varphi^{(M)}(M_{x},M_y)$ are defined in the same way as $\sigma_j$, $\phi(M_{x},M_y)$ and $\varphi(M_{x},M_y)$ by replacing $\{x_i\}$ with $\{x_i^{(M)}\}$.
%\end{proposition}

We are now ready to present the main result .
Recall that $h(\cdot)$ and $l_n$ are defined in
Section~\ref{subsec:m-dep}.
\begin{theorem}\label{thm:Gaussian-dep}
Suppose $\{x_i\}$ is a stationary time series which admits the
representation (\ref{eq:causal}). Assume that $\max_{1\leq j\leq
p}\E x_{ij}^4<C_1$, and
\begin{align}
& c_1<\min_{1\leq j\leq p}\sigma_{j,j}^{(n)}\leq \max_{1\leq j\leq
p}\sigma_{j,j}^{(n)}<c_2, \quad \max_{1\leq j\leq p}\sigma^2_j<c_3, \label{eq:h3}
\\& \max_{1\leq k\leq p}j\{\theta_{j,k,3}(x)\vee \theta_{j,k,3}(y)\}\leq
\ell_j~\text{with}~\sum_{j=1}^{+\infty}\ell_j<\infty, \label{eq:h4}
\end{align}
for some constants $C_1,c_3>0$ and $0<c_1<c_2$. Suppose that there exist
$N$ and $M$ such that $N\geq M$ and Assumption \ref{assum:tail} is
fulfilled. Then for $q\geq 2$, we have
\begin{align}\label{rhon1}
\rho_n\lesssim
n^{-1/8}M^{1/2}l_n^{7/8}+\gamma+(n^{1/8}M^{-1/2}l_n^{-3/8})^{q/(1+q)}\left(\sum^{p}_{j=1}\Theta_{M,j,q}^q\right)^{1/(1+q)},
\end{align}
where $\Theta_{i,j,q}=\Theta_{i,j,q}(x)\vee \Theta_{i,j,q}(y)$.
\end{theorem}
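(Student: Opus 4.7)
The plan is to reduce the Kolmogorov distance to the $M$-dependent case already handled by Theorem \ref{thm2}. Let $x_i^{(M)} = \E[x_i \mid \mathcal{F}_M(i)]$ be the $M$-dependent approximation introduced in Section \ref{subsec:weak-dep}, and let $y_i^{(M)}$ be the analogous projection of the Gaussian process $\{y_i\}$. Write $X^{(M)}$, $Y^{(M)}$ for the corresponding normalized partial sums and $T_{X^{(M)}}$, $T_{Y^{(M)}}$ for their coordinatewise maxima. By the triangle inequality,
\[
\rho_n \leq \sup_t |P(T_X \leq t) - P(T_{X^{(M)}} \leq t)| + \sup_t |P(T_{X^{(M)}} \leq t) - P(T_{Y^{(M)}} \leq t)| + \sup_t |P(T_{Y^{(M)}} \leq t) - P(T_Y \leq t)|.
\]
The middle piece is a Gaussian approximation for $M$-dependent sequences, while the two outer pieces quantify the cost of the $M$-dependent truncation.

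For the middle piece, I would apply Theorem \ref{thm2} to $\{x_i^{(M)}\}$ and $\{y_i^{(M)}\}$, which are $M$-dependent by construction. The covariance conditions follow from (\ref{eq:h3}) because the long-run variance of the truncated series differs from that of the original one by a remainder controlled via (\ref{eq:h4}); the fourth-moment assumption (\ref{eq:condition}) descends from $\max_j \E x_{ij}^4 < C_1$ by conditional Jensen; and the uniform coupling condition (\ref{eq:m-dep-couple}) reduces to the summability $\sum_j \ell_j/j < \infty$ provided by (\ref{eq:h4}). This step delivers the first two terms $n^{-1/8} M^{1/2} l_n^{7/8} + \gamma$ in (\ref{rhon1}).

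Each outer piece is handled by the smooth-indicator/anti-concentration machinery already used in the proof of Theorem \ref{thm1}. Approximate $I\{\cdot \leq t\}$ by $g(s) = g_0(\psi(s - t - e_\beta))$ with $e_\beta = \beta^{-1} \log p$ and set $m = g \circ F_\beta$, so that $G_0 = 1$ and $G_j \lesssim \psi^j$. Since $T_X \leq F_\beta(X) \leq T_X + e_\beta$, the sandwich $I\{T_X \leq t\} \leq m(X)$ and $m(X^{(M)}) \leq I\{T_{X^{(M)}} \leq t + e_\beta + \psi^{-1}\}$ (and their symmetric companions) turn the outer piece into $|\E[m(X) - m(X^{(M)})]|$ plus a Nazarov-type Gaussian anti-concentration error for $T_{Y^{(M)}}$ of order $(e_\beta + \psi^{-1})\sqrt{\log p}$, legitimate thanks to the variance lower bound in (\ref{eq:h3}). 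Applying (\ref{eq:m-approx}) then controls $|\E[m(X) - m(X^{(M)})]|$ by $\psi^{q/(1+q)} \bigl(\sum_{j=1}^p \Theta_{M,j,q}^q(x)\bigr)^{1/(1+q)}$; the analogous argument for the $y$-piece uses $\Theta_{M,j,q}(y)$, and taking the pointwise maximum of the two yields the $\Theta_{M,j,q}$ appearing in (\ref{rhon1}).

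The crux is balancing the smoothing parameter $\psi$: it must be large enough that $(e_\beta + \psi^{-1})\sqrt{\log p}$ is absorbed by the main rate $n^{-1/8} M^{1/2} l_n^{7/8}$, yet large $\psi$ inflates the factor $\psi^{q/(1+q)}$ multiplying the coupling sum. Choosing $\psi \asymp n^{1/8} M^{-1/2} l_n^{-3/8}$ with $\beta$ sufficiently large so that $e_\beta$ is negligible produces exactly the third term $(n^{1/8} M^{-1/2} l_n^{-3/8})^{q/(1+q)}(\sum_j \Theta_{M,j,q}^q)^{1/(1+q)}$ of (\ref{rhon1}). The exponent $q/(1+q)$ reflects the optimal Lipschitz-vs.-tail tradeoff already baked into (\ref{eq:m-approx}); the main technical obstacle is verifying that the $M$-dependent proxies $\{x_i^{(M)}\}$, $\{y_i^{(M)}\}$ satisfy the hypotheses of Theorem \ref{thm2} uniformly in $M$, which is where the summable coupling $\sum_j \ell_j < \infty$ in (\ref{eq:h4}) is essential.
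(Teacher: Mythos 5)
Your proposal is correct and follows essentially the same route as the paper: approximate both $\{x_i\}$ and $\{y_i\}$ by their $M$-dependent projections, verify the hypotheses of Theorem \ref{thm2} for the proxies (tail bounds by conditional Jensen, covariance stability and the coupling condition (\ref{eq:m-dep-couple}) from the summability in (\ref{eq:h4})), and control the approximation error via (\ref{eq:m-approx}) with $G_0=1$, $G_1\lesssim\psi\asymp n^{1/8}M^{-1/2}l_n^{-3/8}$, which produces the third term in (\ref{rhon1}). The only cosmetic difference is that you split the Kolmogorov distance into three pieces and smooth each outer piece separately, whereas the paper applies the triangle inequality directly to $|\E[m(X)-m(Y)]|$ and converts to the Kolmogorov distance once at the end.
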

\noindent The approximation parameter $M$ will be chosen
appropriately to optimize the bound (\ref{rhon1}). The Gaussian
sequence $\{y_i\}$ can be constructed as a causal linear process
(e.g. based on the Wold representation theorem) to capture the
second order property of $\{x_i\}$.

We note that the conditions in Theorem \ref{thm:Gaussian-dep} can be
categorized into two types: tail restriction and weak dependence
assumption. Assumption \ref{assum:tail} and the condition that
$\max_{1\leq j\leq p}\E x_{ij}^4<C_1$ impose restrictions on the
tails of $\{x_{ij}\}_{j=1}^{p}$ uniformly across $j$, while
conditions (\ref{eq:h3})-(\ref{eq:h4}) essentially require weak
dependence uniformly across all the components of $\{x_i\}$. When
$\max_{1\leq j\leq p}\Theta_{M,j,q}=O(\rho^M)$ for $\rho<1$, we have
\begin{align}
\rho_n\lesssim
n^{-1/8}M^{1/2}l_n^{7/8}+\gamma+(n^{1/8}M^{-1/2}l_n^{-3/8})^{q/(1+q)}p^{1/(1+q)}\rho^{(qM)/(1+q)}.
\end{align}
Suppose $p\lesssim \exp(n^{b})$ for some $0\leq b<1/11,$ and
$\E(\max_{1\leq j\leq p}|x_{ij}|/\mathfrak{D}_n)^4\leq  1$. Then
by choosing $M\asymp N\lesssim n^{b'}$ with $4b'+7b<1$ and $1>b'>b$,
$\gamma\asymp n^{-(1-4b'-7b)/8}=o(1)$ and assuming that
$\mathfrak{D}_n\lesssim n^{(3-12b'-13b)/32}$, Condition (\rmnum{1})
in Assumption \ref{assum:tail} holds with $h(x)=x^4$, and
\begin{align*}
\rho_n\lesssim n^{-(1-4b'-7b)/8}.
\end{align*}
The same conclusion holds under condition (\rmnum{2}) in Assumption
\ref{assum:tail} provided that $p\lesssim \exp(n^b)$, $M\asymp
N\lesssim n^{b'}$, $\gamma\asymp n^{-(1-4b'-7b)/8}=o(1)$ and
$\mathfrak{D}_n\lesssim n^{(3-4b'-13b)/8}$ with $1>b'>b$.
%\begin{remark}
%{\rm
%%And $\{y_i^{(M)}\}$ is defined to be the corresponding $M$-dependent approximation sequence.
%}
%\end{remark}
%\begin{remark}
%{\rm It is worth pointing out that the weak dependent assumption on $\{y_i\}$ is non-essential because $\{y_i\}$ is allowed to be a sequence of independent
%(mean-zero) $p$-dimensional Gaussian random variables, see e.g. Remark \ref{rm:cov}.}
%\end{remark}

Below we provide some empirical evidence for two conjectures
proposed in Remark~\ref{rem:interp}, in particular the interplay
between dependence and dimensionality. To this end, we generate
$\{x_i\}$ from a multivariate ARCH model
$x_i=\Sigma^{1/2}_i\epsilon_i$, where
$\epsilon_i=(\epsilon_{i1},\dots,\epsilon_{ip})'$ with
$\sqrt{2}\epsilon_{ij}$ being a sequence of i.i.d $t(4)$ random
variables, and $\Sigma_i=(1-\beta_0)D_p+\beta_0x_{i-1}x_{i-1}'$ with
$\Sigma^{1/2}_i$ being a lower triangular matrix based on the
Cholesky decomposition of $\Sigma_i$. Here $D_p=(d_{ij})_{i,j=1}^p$
with $d_{jj}=1$ and $d_{ij}=0.5$ for $i\neq j.$ Notice that
$\{x_i\}$ are uncorrelated and $\text{cov}(x_i)=D_p$. To capture the
second order property of $\{x_i\}$, we generate independent Gaussian
vectors $\{y_i\}$ from $N(0,D_p)$. Figure \ref{fig:interplay}
illustrates the interplay between dependence and dimensionality
using the P-P plots for $n=60$, $p=100,300,500$, and
$\beta_0=0,0.2,0.5$. For moderate $p$ and $\beta_0$, the Gaussian
approximation is reasonably good, which is consistent with our
theory. Moreover, we also observe the following phenomena. On one
hand, as $p$ increases, the approximation deteriorates for the same
$\beta_0$ which controls the strength of dependence; on the other
hand, for fixed $p$, the approximation becomes worse in the right
tail which is most relevant for practical applications, as $\beta_0$
increases. Note that our theoretical results are finite sample
valid, and thus the sample size supposed not to play any role here.
Hence, we believe that the less dependent of the data vectors, the
faster diverging rate of the dimension is allowed for obtaining an
accurate Gaussian approximation.

\begin{figure}[h]
\centering
\includegraphics[height=5.2cm,width=5.2cm]{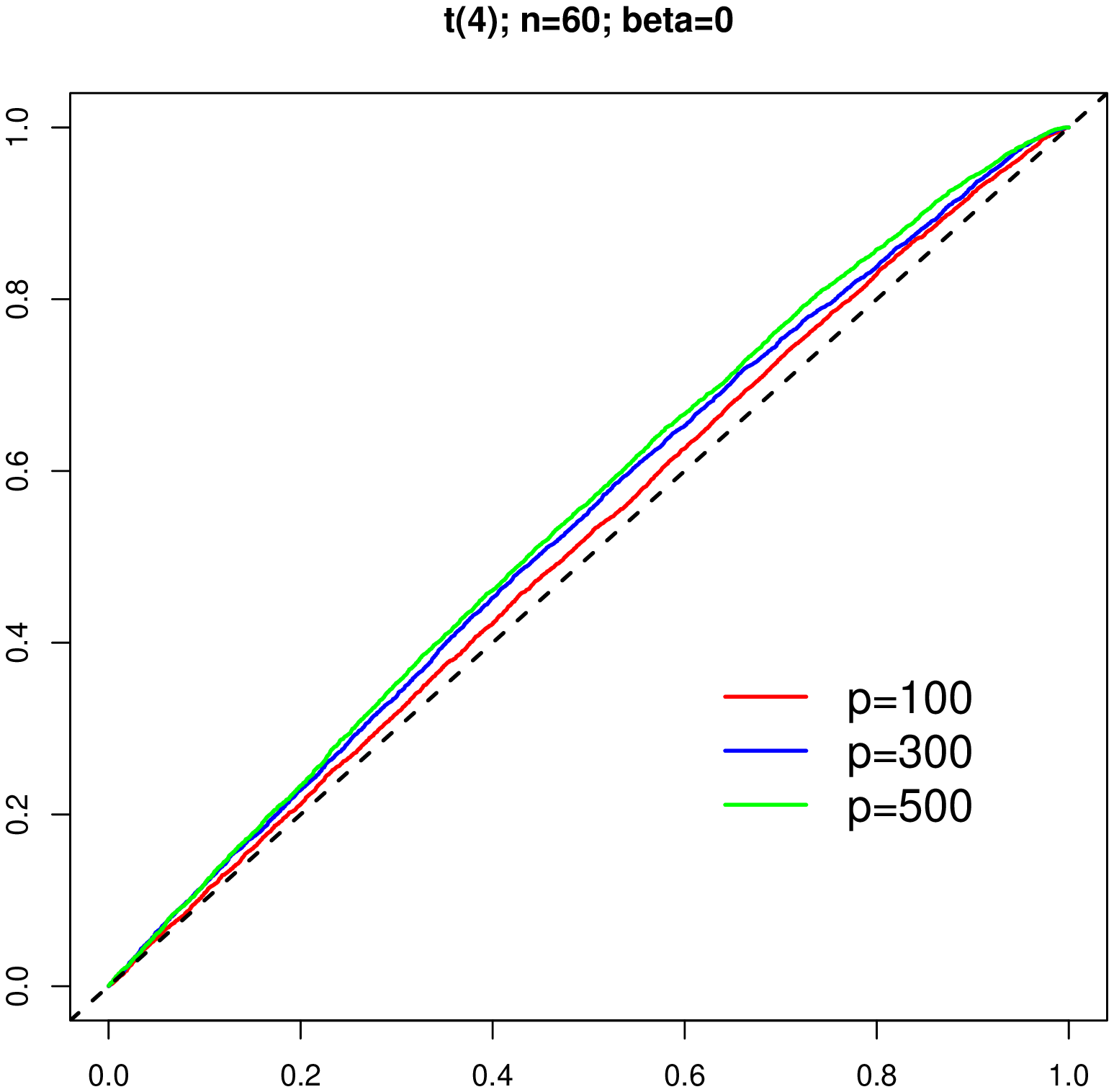}
\includegraphics[height=5.2cm,width=5.2cm]{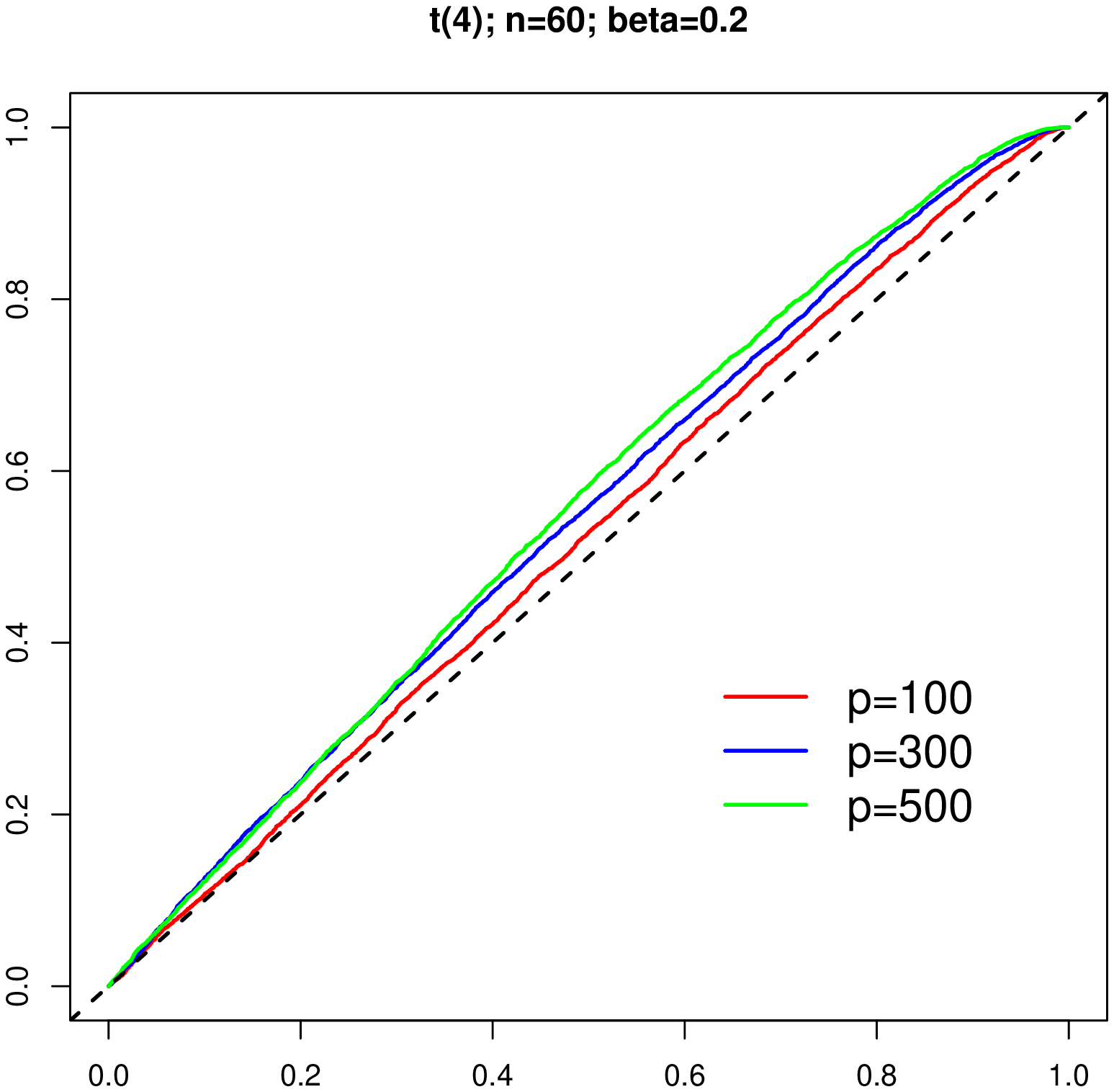}
\includegraphics[height=5.2cm,width=5.2cm]{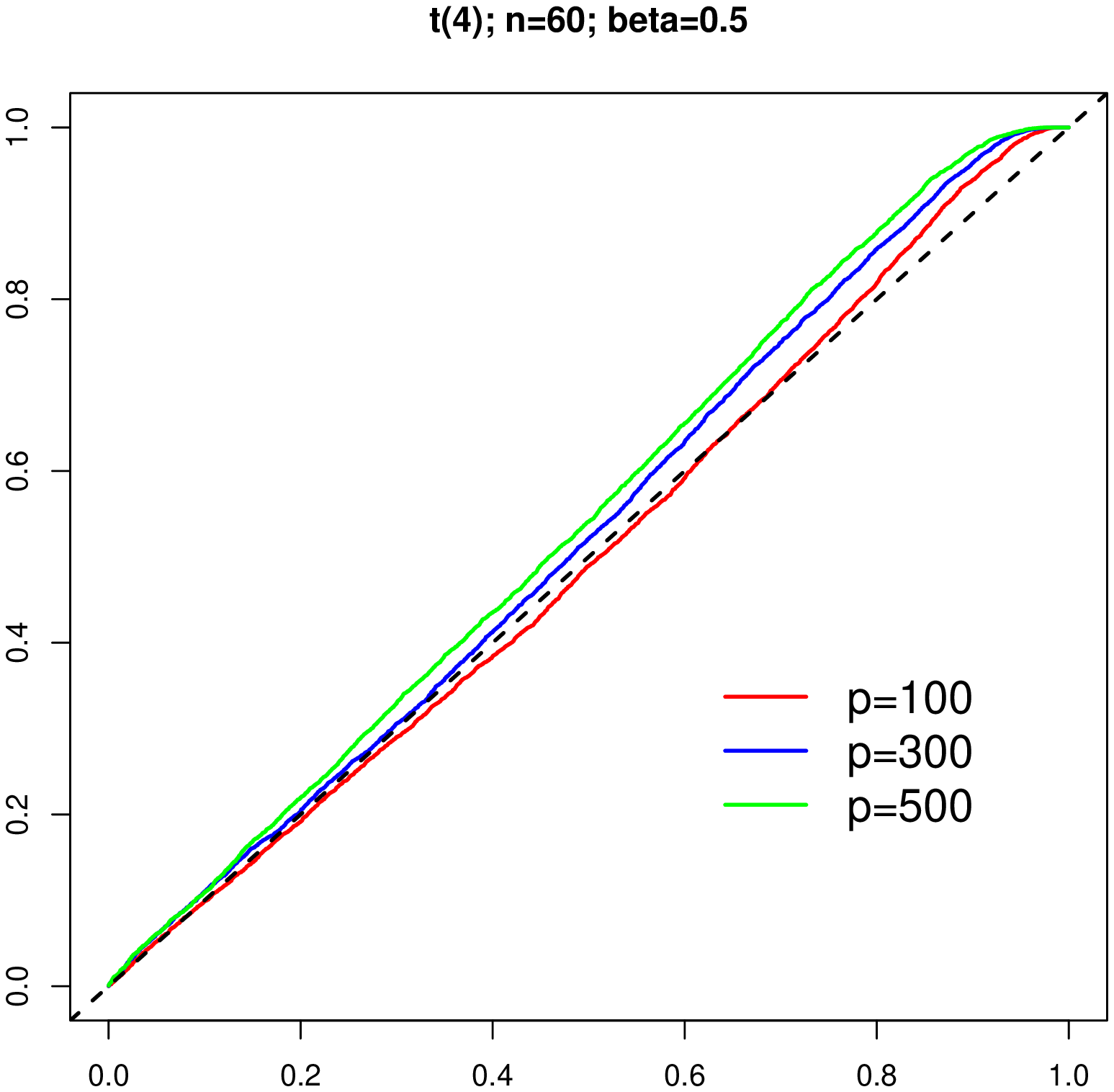}
\caption{Interplay between dependence and dimensionality: P-P plots
comparing distributions of $T_X$ and $T_Y$.}\label{fig:interplay}
\end{figure}

In the end, we discuss an intriguing question: is there any
so-called ``dimension free dependence structure''? In other words,
what kind of dependence assumption will not affect the dimension
increase rate (as compared to the independence case in
\cite{cck13})? To address this question, we consider one
possibility: the original $p$-dimensional vector can be decomposed
into two components namely one times series component and one
independence component, where the former component is asymptotically
ignorable comparing to the latter as $n$ grows. Our
contribution here is to precisely characterize such a ``dimension
free" dependence structure.

\begin{proposition}\label{prop:dim-free}
Consider a $p$-dimensional time series $\{x_i\}$. Suppose there
exists a permutation $\pi(\cdot)$ such that
$(x_{i\pi(1)},\dots,x_{i\pi(p)})=z_i'=(z_{i1}',z_{i2}')'$, where
$\{z_{i1}\}$ is a $q$-dimensional (possibly nonstationary) time
series and $\{z_{i2}\}$ is a $p-q$ dimensional sequence of
independent variables. Suppose $\{z_{i1}\}$ and $\{z_{i2}\}$ are
independent. When $\{z_{i2}\}$ satisfies the assumptions in
Corollary 2.1 of \cite{cck13}, we have
\begin{align}
\sup_{z\in\mathbb{R}}\left|P\left(\max_{q+1\leq j\leq
p}X_{\pi(j)}\leq z\right)-P\left(\max_{q+1\leq j\leq
p}Y_{\pi(j)}\leq z\right)\right|\lesssim n^{-c},\quad c>0.
\end{align}
Recall that $X_{\pi(j)}$ is
$\sum_{i=1}^{n}x_{i\pi(j)}/\sqrt{n}$ and $Y_{\pi(j)}$ is defined in
a similar manner. Then under the additional assumption that
\begin{eqnarray}\label{ass:add}
qn^{-c}+q/\E \max_{q+1\leq j\leq p}Y_j=O(n^{-c'}),\quad c'>0,
\end{eqnarray}
and $\max_{1\leq j\leq p}\E|X_j|^2<+\infty$, we have
\begin{align}\label{eq:dim-free2}
\sup_{z\in\mathbb{R}}\left|P\left(\max_{1\leq i\leq p}X_i\leq
z\right)-P\left(\max_{1\leq i\leq p}Y_i\leq z\right)\right|\lesssim
n^{-c''},\quad c''>0.
\end{align}
\end{proposition}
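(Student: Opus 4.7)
The plan is to reduce (\ref{eq:dim-free2}) to the already-established Gaussian approximation for the independent block $\{z_{i2}\}$, by quantifying how negligible the $q$-dimensional dependent block is. Set $M_1^X=\max_{1\le j\le q}X_{\pi(j)}$, $M_2^X=\max_{q+1\le j\le p}X_{\pi(j)}$, and define $M_1^Y,M_2^Y$ analogously; the assumed independence of $\{z_{i1}\}$ and $\{z_{i2}\}$ implies that $M_1^X$ is independent of $M_2^X$, and likewise for the Gaussian side. Then $\max_{1\le i\le p}X_i=\max(M_1^X,M_2^X)$ and the identity
\begin{align*}
P(M_2^X\le z)-P(\max_{1\le i\le p}X_i\le z)=P(M_1^X>z,\,M_2^X\le z)=P(M_1^X>z)\,P(M_2^X\le z),
\end{align*}
together with its $Y$-analog and the hypothesis $\sup_z|P(M_2^X\le z)-P(M_2^Y\le z)|\lesssim n^{-c}$, shows it suffices to bound $\sup_z P(M_1^X>z)P(M_2^X\le z)$ and $\sup_z P(M_1^Y>z)P(M_2^Y\le z)$ by $n^{-c''}$ for some $c''>0$.

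I would split each supremum at the threshold $z_0=a_n/2$, where $a_n:=\E M_2^Y$; by (\ref{ass:add}) we have $a_n\gtrsim qn^{c'}$. For $z>z_0>0$, Chebyshev's inequality together with $\max_j\E X_j^2\le C$ gives
\begin{align*}
P(M_1^X>z)\le\sum_{j=1}^{q}P(|X_{\pi(j)}|>z)\le\frac{qC}{z^2}\lesssim\frac{q}{a_n^2}\lesssim\frac{1}{qn^{2c'}}\le n^{-2c'},
\end{align*}
and the same bound applies to $P(M_1^Y>z)$ since $\max_j\E Y_j^2=\max_j\var(Y_j)=\max_j\var(X_j)\le C$ (recall $\{y_i\}$ preserves the autocovariance of $\{x_i\}$). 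For $z\le z_0$, the Borell--TIS inequality applied to the $1$-Lipschitz max functional on the Gaussian vector $(Y_{\pi(q+1)},\dots,Y_{\pi(p)})$ with variance proxy $\max_j\var(Y_j)\le C$ yields
\begin{align*}
P(M_2^Y\le z)\le P(M_2^Y-a_n\le -a_n/2)\le\exp\bigl(-a_n^2/(8C)\bigr),
\end{align*}
which is super-polynomially small since $a_n\gtrsim n^{c'}$; invoking the Gaussian approximation hypothesis upgrades this to $P(M_2^X\le z)\lesssim n^{-c}+\exp(-a_n^2/(8C))=O(n^{-c})$.

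Combining the two cases for each of the two products gives $\sup_z P(M_1^X>z)P(M_2^X\le z)\lesssim n^{-\min(c,2c')}$ and identically for the $Y$-side, establishing (\ref{eq:dim-free2}) with $c''=\min(c,2c')$ (or any smaller positive constant). The main delicate point is calibrating the split threshold $z_0$ against the Gaussian concentration scale and verifying that the variance proxy $\max_j\var(Y_j)$ inherits the bounded second moment from $\{X_j\}$; everything else is a routine Chebyshev/Borell calculation driven by the quantitative slack $q\ll a_n$ supplied by (\ref{ass:add}).
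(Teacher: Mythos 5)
Your proof is correct, and it takes a recognizably different route from the paper's. The paper controls the left side of (\ref{eq:dim-free2}) by the probability of the crossing event $\mathcal{D}_x=\{\max_{1\le j\le q}X_{\pi(j)}>\max_{q+1\le j\le p}X_{\pi(j)}\}$ (and its $Y$-analog), bounds $P(\mathcal{D}_x)$ by a union bound over the $q$ dependent coordinates, conditions on each $X_{\pi(j)}$ and transfers to the Gaussian block — which is where the $qn^{-c}$ term of (\ref{ass:add}) enters — and then combines the same Borell-type lower-tail bound for $\max_{q+1\le j\le p}Y_{\pi(j)}$ with a first-moment Markov bound $P(X_{\pi(j)}>\widetilde q)\le \E|X_{\pi(j)}|/\widetilde q$ at an auxiliary level $\widetilde q$ chosen to interpolate between $q$ and $\E\max_{q+1\le j\le p}Y_{\pi(j)}$. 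You instead use the exact factorization $P(M_2^X\le z)-P(\max_i X_i\le z)=P(M_1^X>z)P(M_2^X\le z)$ afforded by the block independence, and split the supremum over $z$ at the single deterministic threshold $a_n/2$, using Chebyshev with the assumed second moment in place of Markov at $\widetilde q$. The ingredients are identical (block independence, the Gaussian approximation for the independent block, Gaussian concentration for its maximum, a moment bound on the dependent block), but your decomposition invokes the Gaussian approximation only once rather than once per dependent coordinate, so it never incurs a factor of $q$ against $n^{-c}$ and thus does not actually use the $qn^{-c}=O(n^{-c'})$ half of (\ref{ass:add}); it also dispenses with the auxiliary sequence $\widetilde q$. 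Both arguments yield a polynomial rate; yours gives the explicit exponent $c''=\min(c,2c')$.
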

\noindent The additional assumption (\ref{ass:add}) implies that $q$
is of a polynomial order w.r.t. $n$ while $(p-q)$ achieves the
exponential order as specified in Corollary 2.1 of \cite{cck13}.
Therefore, the largest possible diverging rate of $p$ allowed in
Proposition~\ref{prop:dim-free} remains the same as that in the
independence case (\cite{cck13}). The independence assumption
between $\{z_{i1}\}$ and $\{z_{i2}\}$ might be relaxed. Here, we
assume it mainly for technical simplicity so that only one single
dependence assumption $\max_{1\leq j\leq q}\E|X_{\pi(j)}|^2<\infty$
needs to be imposed on $\{z_{i1}\}$.

\section{Bootstrap Inference}\label{sec:boot}
In practice, the intrinsic dependence structure of time series data
is usually unknown. Hence, the Gaussian approximation theory becomes
too restrictive to use. However, this general theory provides a
foundation in developing the bootstrap inference theory that do not
require such knowledge. In this section, we consider two types of
bootstrap procedures: (i) blockwise multiplier bootstrap; and (ii)
non-overlapping block bootstrap. The former is employed in Section~\ref{sec:statappl}, while the latter is a more flexible alternative.

\subsection{Blockwise multiplier bootstrap}\label{subsec:bmb}
To approximate the quantiles of $T_X$, we introduce a blockwise
multiplier bootstrap procedure for $M$-dependent and weakly
dependent time series considered in Sections \ref{subsec:m-dep} and
\ref{subsec:weak-dep}. Suppose $n=(N+M)r$, where $N\geq M$ and
$N,M,r\rightarrow +\infty$ as $n\rightarrow+\infty$. Let
$\{(e_{i},\widetilde{e}_i)\}$ be a sequence of i.i.d $N(0,I_2)$
variables that are independent of $\{x_i\}$. Define
\begin{equation}
T_D=\max_{1\leq j\leq p}\frac{1}{\sqrt{n}}\sum^{r}_{i=1}D_{ij},\quad
D_{ij}=A_{ij}e_i+B_{ij}\widetilde{e}_i.
\end{equation}
Recall the definitions of $A_{ij}$ and $B_{ij}$ in (\ref{eq:AB}).
Conditional on $\{x_i\}$, $D_{ij}$ are mean-zero Gaussian random
variables such that
\begin{equation}
\text{cov}(D_{ij},D_{i'k})=\delta_{ii'}(A_{ij}A_{i'k}+B_{ij}B_{i'k}),\quad
\delta_{ii'}=\mathbf{1}\{i=i'\}.
\end{equation}
Thus we have
\begin{equation}
\text{cov}\left(\sum^{r}_{i=1}D_{ij}/\sqrt{n},\sum^{r}_{i=1}D_{ik}/\sqrt{n}\right)=\frac{1}{n}\sum^{r}_{i=1}(A_{ij}A_{ik}+B_{ij}B_{ik}).
\end{equation}
Conditional on the sample $\{x_i\}^{n}_{i=1}$, define the
$\alpha$-quantile of $T_D$ as
\begin{equation}
c_{T_D}(\alpha):=\inf\{t\in\mathbb{R}:P(T_D\leq
t|\{x_i\}^{n}_{i=1})\geq \alpha\}.
\end{equation}

Our goal below is to quantify
\begin{equation}
\widetilde{\rho}_n:=\sup_{\alpha\in(0,1)}|P(T_X\leq
c_{T_D}(\alpha))-\alpha|.
\end{equation}
To this end, consider the estimation errors
\begin{equation}
\begin{split}
&E_{A}:=\max_{1\leq j,k\leq
p}\left|\frac{1}{r}\sum^{r}_{i=1}A_{ij}A_{ik}/N-\sigma_{j,k}^{(n)}\right|,\quad
\\&E_{B}:=\max_{1\leq j,k\leq
p}\left|\frac{1}{r}\sum^{r}_{i=1}B_{ij}B_{ik}/M-\sigma_{j,k}^{(n)}\right|,\\
&E_{AB}:=\max_{1\leq j,k\leq
p}\left|\frac{1}{n}\sum^{r}_{i=1}(A_{ij}A_{ik}+B_{ij}B_{ik})-\sigma_{j,k}^{(n)}\right|,
\end{split}
\end{equation}
where
$\sigma_{j,k}^{(n)}=\frac{1}{n}\sum^{n-1}_{l=1-n}(n-|l|)\gamma_{x,jk}(l)$.
Recall that $h(\cdot)$ is a nondecreasing convex function with
$h(0)=0$. Define the Orlicz norm as
\begin{align*}
||X||_{h}=\inf\left\{B>0:\E h\left(\frac{|X|}{B}\right)\leq
1\right\}.
\end{align*}

We first consider $M$-dependent stationary sequence where $M$ is
allowed to grow with the sample size $n$. Define the following
quantities which characterize the higher order properties of the
time series (e.g., $\bar{\sigma}^2_{x,N}$ and $\varsigma_{x,N}$
below characterize the fourth order property of $\{x_i\}$),
\begin{align*}
&\bar{\sigma}^2_{x,N}=\max_{1\leq j\leq p}\left\{\frac{1}{N}\sum_{i_1,i_2,i_3=-\infty}^{+\infty}|\text{cum}(x_{i_1j},x_{i_2j},x_{i_3j},x_{0j})|+\sigma_{j}^4\right\},\\
&\varsigma_{x,N}=\left(\E \max_{1\leq j\leq
p}\left|\sum^{N}_{i=1}x_{ij}/\sqrt{N}\right|^4\right)^{1/4},\\
&\zeta_{x,h,N}=\max_{1\leq j\leq
p}\left|\left|\sum^{N}_{i=1}x_{ij}/\sqrt{N}\right|\right|_{h},\quad
\varpi_{x}=\max_{1\leq j,k\leq p}\sum^{+\infty}_{l=-\infty}|l||\E
x_{i,j}x_{i+l,k}|,
\end{align*}
where $\text{\cum}$ denotes the cumulant (see e.g.
\cite{brillinger}) and
$\sigma_j^2=\sum^{+\infty}_{l=-\infty}|\gamma_{x,jj}(l)|$.

The following lemma plays an important role in the subsequent
derivations.
\begin{lemma}\label{lemma:m-dep-boot}
Suppose $\{x_i\}$ is a $M$-dependent stationary sequence. Then with
$h(x)=\exp(x)-1$,
\begin{align*}
&\E E_{A}\lesssim  \bar{\sigma}_{x,N}\sqrt{\log p/r}+\log
p\{\log(rp)\}^2\zeta_{x,h,N}^2/r+\varpi_x/N,
\\&\E E_{B}\lesssim \bar{\sigma}_{x,M}\sqrt{\log
p/r}+\log p\{\log(rp)\}^2\zeta_{x,h,M}^2/r+\varpi_x/M.
\end{align*}
Alternatively, we have
\begin{align*}
&\E E_{A}\lesssim  \bar{\sigma}_{x,N}\sqrt{\log p/r}+\log
p\varsigma_{x,N}^2/\sqrt{r}+\varpi_x/N,\\
&\E E_{B}\lesssim \bar{\sigma}_{x,M}\sqrt{\log p/r}+\log
p\varsigma_{x,M}^{2}/\sqrt{r}+\varpi_x/M.
\end{align*}
\end{lemma}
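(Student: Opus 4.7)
The plan is to decompose the estimation error into a bias term and a stochastic (centered sum) term, exploit the $M$-dependence to make the block sums $\{A_{ij}\}_{i=1}^r$ (and $\{B_{ij}\}_{i=1}^r$) exactly i.i.d.\ across the block index $i$, and then apply a maximal inequality over the $p^2$ coordinate pairs $(j,k)$. Write
\begin{equation*}
E_A \;\leq\; \max_{1\leq j,k\leq p}\Bigl|\tfrac{1}{r}\sum_{i=1}^{r}\bigl(A_{ij}A_{ik}/N - \E A_{1j}A_{1k}/N\bigr)\Bigr| \;+\; \max_{1\leq j,k\leq p}\bigl|\E A_{1j}A_{1k}/N - \sigma_{j,k}^{(n)}\bigr|,
\end{equation*}
and call these two pieces $E_{A,1}$ and $E_{A,2}$.

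For the bias $E_{A,2}$, stationarity gives $\E A_{1j}A_{1k}/N = \sum_{|s|<N}(1-|s|/N)\gamma_{x,jk}(s)$, and $M$-dependence with $N\geq M$ restricts the support to $|s|\leq M$; comparing with the analogous expression for $\sigma_{j,k}^{(n)}$ and using $|1/N-1/n|\leq 1/N$ yields $E_{A,2}\leq \varpi_x/N$ immediately. The analogous computation for the $B$-blocks produces $\varpi_x/M$.

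For the stochastic piece $E_{A,1}$, the gaps of length $M$ between consecutive $A$-blocks combined with $M$-dependence make $\{(A_{i1},\ldots,A_{ip})\}_{i=1}^{r}$ a genuine i.i.d.\ sequence, so $W_i^{(j,k)} := A_{ij}A_{ik}/N - \E A_{1j}A_{1k}/N$ are i.i.d.\ mean zero. First I would bound $\var(W_1^{(j,k)})$ by expanding $\E A_{1j}^2 A_{1k}^2$ into its cumulant decomposition: the $\mathrm{cum}_4$ contribution generates the $N^{-1}\sum_{i_1,i_2,i_3}|\mathrm{cum}(x_{i_1 j},x_{i_2 j},x_{i_3 j},x_{0 j})|$ term (after Cauchy--Schwarz to reduce $(j,k)$ to pure $j$-sums), and the product-of-covariance contributions produce the $\sigma_j^4$ term; together these give $\max_{j,k}\var W_1^{(j,k)} \lesssim \bar{\sigma}_{x,N}^2$. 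Then apply a Bernstein/Nemirovski-type maximal inequality for i.i.d.\ sums over $d=p^2$ coordinates, which yields a $\sqrt{(\log p)/r}\cdot \bar\sigma_{x,N}$ Gaussian part plus a truncation/tail part of the form $(\log p)\cdot r^{-1}\cdot (\text{envelope})^2$. For the first bound I take the envelope in the exponential Orlicz norm $\|\cdot\|_h$ with $h(x)=e^x-1$: by the product inequality $\|UV\|_{h^{1/2}}\lesssim \|U\|_h\|V\|_h$, one gets $\|W_1^{(j,k)}\|_{h^{1/2}}\lesssim \zeta_{x,h,N}^{2}$, and the standard $(\log(rp))^2$ factor from the sub-exponential maximal lemma produces exactly $\log p\{\log(rp)\}^2\zeta_{x,h,N}^2/r$. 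For the alternative bound, I replace the Orlicz-norm envelope step by a moment version of the same inequality (Rosenthal/Nemirovski with fourth moments), using $\E\max_{j,k}|W_1^{(j,k)}|^2 \leq \varsigma_{x,N}^4$ and the classical $\E\max_i \max_{j,k}|W_i^{(j,k)}| \lesssim r^{1/2}(\E\max_{j,k}|W_1^{(j,k)}|^2)^{1/2}$ to obtain $\log p\cdot \varsigma_{x,N}^2/\sqrt{r}$. The $E_B$ bound is identical with $M$ replacing $N$ throughout.

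The main obstacle is the variance computation for $W_1^{(j,k)}$: the fourth cumulant expansion of $\E A_{1j}^2 A_{1k}^2$ has many off-diagonal cross-terms indexed by $(j,k)$, and to match the clean $\bar{\sigma}_{x,N}^2$ form (which involves only the diagonal index $j$) requires an AM-GM/Cauchy--Schwarz reduction that absorbs the mixed $(j,k)$ second- and fourth-order cumulants into the maxima over single coordinates. A secondary technical point is that the exponential-Orlicz version of Nemirovski's inequality produces the peculiar $\{\log(rp)\}^2$ rather than a single $\log$; this is a known feature coming from needing to convert an Orlicz-$L^\psi$ tail into the supremum over $r$ summands and then over $p^2$ coordinates via a two-step union bound, so I would import it from the existing literature rather than re-derive it.
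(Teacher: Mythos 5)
Your proposal is correct and follows essentially the same route as the paper: the same decomposition into a bias term (controlled by $\varpi_x/N$, resp.\ $\varpi_x/M$) plus a centered sum of i.i.d.\ block products, the same Nemirovski-type maximal inequality over the $p^2$ coordinate pairs, the cumulant expansion for the variance giving $\bar{\sigma}_{x,N}^2$, and the exponential-Orlicz (resp.\ fourth-moment) treatment of the envelope producing the $\{\log(rp)\}^2/r$ (resp.\ $1/\sqrt{r}$) factors. The one simplification worth noting is that the ``main obstacle'' you identify disappears if you apply Cauchy--Schwarz \emph{before} expanding, i.e.\ $\E (A_{1j}A_{1k})^2\leq (\E A_{1j}^4)^{1/2}(\E A_{1k}^4)^{1/2}\leq \max_{1\leq j\leq p}\E A_{1j}^4$, so that only the diagonal fourth moment ever needs a cumulant decomposition --- which is exactly what the paper does.
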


Let $c_{T_Y}(\alpha)=\inf\{t\in\mathbb{R}:P(T_Y\leq t)\geq
\alpha\}.$ In the spirit of Lemma 3.2 in \cite{cck13}, we can show
that when $c_1<\min_{1\leq j\leq p}\sigma_{j,j}^{(n)}\leq \max_{1\leq
j\leq p}\sigma_{j,j}^{(n)}<c_2$ for some $0<c_1<c_2,$
\begin{align*}
&P(c_{T_D}(\alpha)\leq c_{T_Y}(\alpha+\pi(\nu)))\geq 1-P(E_{AB}>\nu),\\
&P(c_{T_Y}(\alpha)\leq c_{T_D}(\alpha+\pi(\nu)))\geq
1-P(E_{AB}>\nu),
\end{align*}
where $\pi(\nu)=C\nu^{1/3}(1\vee \log(p/\nu))^{2/3}$ for some
constant $C>0$ depending on $c_1,c_2$. Using the arguments in
Theorem 3.1 of \cite{cck13}, it is not hard to show that
\begin{equation}\label{assum:boot-m-dep}
\sup_{\alpha\in(0,1)}\left|P(T_X\leq
c_{T_D}(\alpha))-\alpha\right|\lesssim
\rho_n+\pi(\nu)+P(E_{AB}>\nu).
\end{equation}
Because $\E E_{AB}\leq \E E_A + \E E_B$, we deduce that
\begin{align}\label{eq:boot}
\widetilde\rho_n:= \sup_{\alpha\in(0,1)}\left|P(T_X\leq
c_{T_D}(\alpha))-\alpha\right|\lesssim \rho_n+\nu^{1/3}(1\vee
\log(p/\nu))^{2/3}+\E E_{A}/\nu+\E E_{B}/\nu.
\end{align}
%Note the term $P(T_X\leq c_{T_D}(\alpha))$ can be interpreted as $\E_{x}[F_{T_X}(c_{T_D}(\alpha))]$,
%where $F_{T_X}(\cdot)$ is the distribution function of $T_X$ and $\E_{x}$ denotes the expectation with respect to the sample $\{x_i\}$. Alternatively, we can derive a similar result for $F_{T_X}(c_{T_D}(\alpha))$ conditioning on $\{x_i\}^{n}_{i=1}$. \Cheng{conditional expectation?}

\begin{assumption}\label{assum:boot-m-dep}
Suppose $p\lesssim \exp(n^{b})$ with $0\leq b<1/15.$ Set $M\lesssim n^{b'}$
and $N\lesssim n^{b''}$ with $1> b''\geq b'$, $4b'+7b<1$ and $b'>2b$.
Assume that $\mathfrak{D}_n\lesssim n^{(3-12b'-13b)/32}$ under
Condition (\rmnum{1}) in Assumption \ref{assum:tail} with $h(x)=x^4$
or $\mathfrak{D}_n\lesssim n^{(3-4b'-13b)/8}$ under Condition
(\rmnum{2}) in Assumption \ref{assum:tail}. Further assume that one
of the following two conditions holds.
\\ \textbf{Condition 1:} $\bar{\sigma}_{x,M}\vee\bar{\sigma}_{x,N}\lesssim n^{s_1},
\zeta_{x,h,M}\vee \zeta_{x,h,N}\lesssim n^{s_2/2}, \varpi_x\lesssim
n^{s_3}$, where $h(x)=\exp(x)-1$ and $s_1,s_2,s_3$ satisfy that
\begin{align*}
s_b:=((1-5b-b'')/2-s_1)\wedge(1-5b-b''-s_2)\wedge(b'-2b-s_3)>0.
\end{align*}
\textbf{Condition 2:}
$\bar{\sigma}_{x,M}\vee\bar{\sigma}_{x,N}\lesssim
n^{s_1},\varsigma_{x,M}\vee \varsigma_{x,N}\lesssim n^{s_2'/2},
\varpi_x\lesssim n^{s_3}$, and $s_1,s_2,s_3$ satisfy that
\begin{align*}
s_b':=((1-5b-b'')/2-s_1)\wedge((1-6b-b'')/2-s_2')\wedge(b'-2b-s_3)>0.
\end{align*}
\end{assumption}

We are now in position to present the first main result in this
section.
\begin{theorem}\label{thm:m-dep-boot}
Consider a $M$-dependent stationary time series $\{x_i\}$. Under the
assumptions in Theorem \ref{thm2} and Assumption
\ref{assum:boot-m-dep},
\begin{equation}
\sup_{\alpha\in(0,1)}\left|P(T_X\leq
c_{T_D}(\alpha))-\alpha\right|\lesssim
\begin{cases}
n^{-c},\quad &c=\min\{s_b/4,(1-4b'-7b)/8\},
\\ &\text{under Condition 1},
\\n^{-c'},\quad & c'=\min\{s_b'/4,(1-4b'-7b)/8\},
\\ & \text{under Condition
2}.
\end{cases}
\end{equation}
\end{theorem}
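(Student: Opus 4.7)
The plan is to combine three ingredients packaged in the general inequality (\ref{eq:boot}): the Gaussian approximation bound on $\rho_n$ from Theorem~\ref{thm2}, the moment bounds on the estimation errors $\E E_A$ and $\E E_B$ from Lemma~\ref{lemma:m-dep-boot}, and a final optimization over the free parameter $\nu$. To start, Assumption~\ref{assum:boot-m-dep} has been set up precisely so that all hypotheses of Theorem~\ref{thm2} are met (via the stated choices of $p$, $M$, $N$, $\gamma$, and $\mathfrak{D}_n$ under either Condition~(i) or Condition~(ii) of Assumption~\ref{assum:tail}). Invoking Theorem~\ref{thm2} with $\gamma\asymp n^{-(1-4b'-7b)/8}$ therefore yields $\rho_n\lesssim n^{-(1-4b'-7b)/8}$, which controls the first summand in (\ref{eq:boot}).

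Next, I would substitute the growth rates $\bar\sigma_{x,\cdot}\lesssim n^{s_1}$, $\zeta_{x,h,\cdot}\lesssim n^{s_2/2}$ (under Condition~1) or $\varsigma_{x,\cdot}\lesssim n^{s_2'/2}$ (under Condition~2), and $\varpi_x\lesssim n^{s_3}$, together with $r\asymp n^{1-b''}$, $\log p\lesssim n^b$, $N\lesssim n^{b''}$, and $M\lesssim n^{b'}$, into the two alternative bounds supplied by Lemma~\ref{lemma:m-dep-boot}. The three resulting exponents match term-for-term the three clauses defining $s_b$ (resp.~$s_b'$), and the strict positivity of $s_b$ (resp.~$s_b'$) provides the slack needed to absorb the $\log p$, $\log(rp)$ and similar sub-polynomial factors appearing in Lemma~\ref{lemma:m-dep-boot}. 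This gives $\E E_A+\E E_B\lesssim n^{-s_b}$ under Condition~1 and $\E E_A+\E E_B\lesssim n^{-s_b'}$ under Condition~2.

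Writing $C_n$ for the bound just obtained, I would balance the two remaining terms of (\ref{eq:boot}) by taking $\nu\asymp C_n^{3/4}$; this gives $\nu^{1/3}(1\vee\log(p/\nu))^{2/3}\asymp C_n^{1/4}$ and $C_n/\nu\asymp C_n^{1/4}$, both up to sub-polynomial factors. Combining with the rate for $\rho_n$ from the first step yields $\widetilde\rho_n\lesssim \rho_n+C_n^{1/4}\lesssim n^{-\min\{(1-4b'-7b)/8,\,s_b/4\}}$, and the analogous statement holds under Condition~2 with $s_b$ replaced by $s_b'$; this is exactly the claim.

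The most delicate point is the bookkeeping in the second paragraph: I must verify that, under each of Conditions~1 and~2, all three terms in the corresponding bound of Lemma~\ref{lemma:m-dep-boot} are simultaneously $O(n^{-s_b})$ (resp.~$O(n^{-s_b'})$) once the logarithmic prefactors are absorbed into the strict-inequality slack in the definition of $s_b$, and that the chosen scaling $\nu\asymp C_n^{3/4}$ remains compatible with the regime in which (\ref{eq:boot}) was derived, so that in particular $\pi(\nu)\to 0$ and the anti-concentration step from \cite{cck13} underlying (\ref{eq:boot}) still applies.
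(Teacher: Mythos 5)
Your overall strategy is exactly the paper's: bound $\rho_n$ via Theorem \ref{thm2} with $\gamma\asymp n^{-(1-4b'-7b)/8}$, bound $\E E_A+\E E_B$ via Lemma \ref{lemma:m-dep-boot} together with Assumption \ref{assum:boot-m-dep}, and then optimize over $\nu$ in (\ref{eq:boot}). The gap is in the last step. Writing $C_n\asymp n^{-s_b}$ for your bound on $\E E_A+\E E_B$ and taking $\nu\asymp C_n^{3/4}=n^{-3s_b/4}$, you assert that $\pi(\nu)=\nu^{1/3}(1\vee\log(p/\nu))^{2/3}\asymp C_n^{1/4}$ ``up to sub-polynomial factors.'' But in the regime the theorem is designed for, $\log p$ can be of order $n^{b}$ with $b>0$, so $(1\vee\log(p/\nu))^{2/3}$ is a genuinely \emph{polynomial} factor of order $n^{2b/3}$, and your choice of $\nu$ only gives $\pi(\nu)\lesssim n^{-s_b/4+2b/3}$. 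Since Assumption \ref{assum:boot-m-dep} guarantees nothing beyond $s_b>0$, this bound need not even tend to zero (take, say, $b=1/20$ and $s_b=0.01$), so as written the argument does not deliver the claimed rate $n^{-s_b/4}$.

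The fix --- and what the paper actually does --- is to observe that each of the three terms in Lemma \ref{lemma:m-dep-boot} is in fact $O(n^{-s_b-2b})$, not merely $O(n^{-s_b})$: the offsets $-5b$ (resp.\ $-6b$) and $-2b$ built into the three clauses defining $s_b$ (resp.\ $s_b'$) reserve exactly an extra factor $n^{-2b}$ beyond what is needed to absorb the powers of $\log p$ and $\log(rp)$. One then chooses $\nu\asymp n^{-3s_b/4-2b}$, so that $\E E_{AB}/\nu\lesssim n^{-s_b-2b+3s_b/4+2b}=n^{-s_b/4}$ while $\nu^{1/3}\bigl(1\vee\log(p/\nu)\bigr)^{2/3}\lesssim n^{-s_b/4-2b/3}\cdot n^{2b/3}=n^{-s_b/4}$. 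With that single adjustment (and its analogue under Condition 2) your argument goes through and coincides with the paper's proof; the remainder of your outline, including the reduction to (\ref{eq:boot}) and the verification of the hypotheses of Theorem \ref{thm2} under Assumption \ref{assum:boot-m-dep}, is correct.
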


Our next theorem extends the above result to weakly dependent
stationary time series.
\begin{theorem}\label{thm:dep-boot}
Consider a weakly dependent stationary time series $\{x_i\}$.
Suppose $\max_{1\leq j\leq p}\Theta_{M,j,q}=O(\rho^M)$ for $\rho<1$
and some $q\geq 4$. Then under the assumptions in Theorem
\ref{thm:Gaussian-dep} and Assumption \ref{assum:boot-m-dep},
\begin{equation}
\sup_{\alpha\in(0,1)}\left|P(T_X\leq
c_{T_D}(\alpha))-\alpha\right|\lesssim
\begin{cases}
n^{-c},\quad & c=\min\{s_b/4,(1-4b'-7b)/8\},
\\ & \text{under Condition 1},
\\ n^{-c'},\quad & c'=\min\{s_b'/4,(1-4b'-7b)/8\},
\\ & \text{under Condition
2}.
\end{cases}
\end{equation}
\end{theorem}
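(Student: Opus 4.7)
The plan is to reduce to the $M$-dependent case by exploiting the $M$-dependent approximation $x_i^{(M)}=\E[x_i|\mathcal{F}_M(i)]$, mirroring the architecture of the proof of Theorem~\ref{thm:m-dep-boot} but with an extra step that controls the discrepancy between the original weakly dependent sequence and its $M$-dependent truncation. The exponential decay $\max_{1\le j\le p}\Theta_{M,j,q}=O(\rho^M)$ combined with $M\asymp n^{b'}$ ensures that this discrepancy is super-polynomially small, hence dominated by every polynomial rate encountered elsewhere.

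The first step is to invoke the general bound
\begin{equation*}
\widetilde\rho_n \;\lesssim\; \rho_n + \nu^{1/3}\bigl(1\vee\log(p/\nu)\bigr)^{2/3} + \E E_A/\nu + \E E_B/\nu,
\end{equation*}
obtained from (\ref{eq:boot}), with $A_{ij},B_{ij}$ the block sums formed from the original data. For the Gaussian approximation term $\rho_n$, I apply Theorem~\ref{thm:Gaussian-dep}: choosing $M\asymp N\lesssim n^{b'}$ together with Assumption~\ref{assum:tail} and using $\max_{1\le j\le p}\Theta_{M,j,q}=O(\rho^M)$ to kill the third term in (\ref{rhon1}) up to constants, yields $\rho_n\lesssim n^{-(1-4b'-7b)/8}$, exactly matching the first branch of the announced rate.

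The second and more delicate step is to bound $\E E_A$ and $\E E_B$ under weak dependence. Define the surrogate block sums $A_{ij}^{(M)}$ and $B_{ij}^{(M)}$ by replacing $x_l$ with $x_l^{(M)}$. Because $\{x_i^{(M)}\}$ is a stationary $M$-dependent sequence with the same autocovariance structure up to errors involving $\Theta_{M,\cdot,2}$, Lemma~\ref{lemma:m-dep-boot} applied to $\{x_i^{(M)}\}$ delivers
\begin{equation*}
\E E_A^{(M)} \;\lesssim\; \bar\sigma_{x,N}\sqrt{\log p/r} + \log p\,\{\log(rp)\}^2\zeta_{x,h,N}^2/r + \varpi_x/N,
\end{equation*}
and an analogous bound for $\E E_B^{(M)}$ (or the alternative form involving $\varsigma_{x,N}$). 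To move back to $A_{ij},B_{ij}$, I decompose
\begin{equation*}
A_{ij}A_{ik}-A_{ij}^{(M)}A_{ik}^{(M)} \;=\; (A_{ij}-A_{ij}^{(M)})A_{ik}+A_{ij}^{(M)}(A_{ik}-A_{ik}^{(M)}),
\end{equation*}
and invoke the Liu--Lin style moment inequality together with $\Theta_{M,j,q}=O(\rho^M)$ to obtain $\|A_{ij}-A_{ij}^{(M)}\|_q\lesssim N^{1/2}\rho^M$, which after applying Cauchy--Schwarz and a maximal inequality over $p^2r$ pairs produces an error of order $\sqrt{\log(pr)}\,N^{1/2}\rho^M$ that is super-polynomially small under $M\asymp n^{b'}$. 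This shows $\E E_A$ and $\E E_B$ inherit (up to negligible terms) exactly the M-dependent bounds, so Condition~1 (resp.~Condition~2) in Assumption~\ref{assum:boot-m-dep} delivers $\E E_A+\E E_B\lesssim n^{-s_b}$ (resp.~$n^{-s_b'}$), identical to what appears in the $M$-dependent proof.

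The final step is to optimize $\nu$. Balancing $\nu^{1/3}(\log(p/\nu))^{2/3}$ against $\nu^{-1}(\E E_A+\E E_B)$ and using $\log p\lesssim n^b$ yields the same algebraic trade-off as in Theorem~\ref{thm:m-dep-boot}, and produces the stated exponents $c=\min\{s_b/4,(1-4b'-7b)/8\}$ or $c'=\min\{s_b'/4,(1-4b'-7b)/8\}$. The main obstacle I anticipate is bookkeeping the moment bounds so that the weakly dependent analogues of $\bar\sigma_{x,N}$, $\zeta_{x,h,N}$, $\varsigma_{x,N}$, $\varpi_x$ satisfy the same polynomial control as in Condition~1 and Condition~2; this is made possible precisely because $\Theta_{M,j,q}=O(\rho^M)$ forces the weakly dependent higher-order cumulants and Orlicz-type quantities to track those of the $M$-dependent truncation up to negligible corrections, so no new exponents arise beyond $s_b,s_b'$ and the Gaussian approximation rate.
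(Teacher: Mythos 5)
Your proposal follows essentially the same route as the paper's proof: approximate by the $M$-dependent sequence $x_i^{(M)}$, apply Theorem~\ref{thm:Gaussian-dep} with the exponential decay of $\Theta_{M,j,q}$ to recover $\rho_n\lesssim n^{-(1-4b'-7b)/8}$, control $\E E_A$ and $\E E_B$ by comparing $A_{ij}A_{ik}$ with $A_{ij}^{(M)}A_{ik}^{(M)}$ via the product decomposition and the Liu--Lin moment bound $(\E|A_{1j}-A_{1j}^{(M)}|^2)^{1/2}\lesssim \sqrt{N}\,\Theta_{M,j,q}=O(\sqrt{N}\rho^M)$, and absorb the super-polynomially small discrepancies before optimizing $\nu$ as in Theorem~\ref{thm:m-dep-boot}. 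The only cosmetic difference is that the paper bounds the maximum over pairs by the crude sum over $p^2$ pairs (yielding $p^2\rho^M$, still negligible since $b'>2b$) rather than your maximal-inequality bound, and it explicitly records the autocovariance discrepancy $\max_{j,k}\sum_h|\E x_{ij}^{(M)}x_{(i+h)k}^{(M)}-\E x_{ij}x_{(i+h)k}|\lesssim M\rho^M$, which your remark on matching autocovariance structures covers.
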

\noindent Remark that the results of Theorems~\ref{thm:m-dep-boot}
and ~\ref{thm:dep-boot} are still valid even when $p$ is fixed or $p$ grows slower
than the exponential rate required in Assumption~\ref{assum:boot-m-dep}.

\begin{remark}
{\rm When $\{x_i\}$ has the so-called geometric moment contraction
(GMC) property (uniformly across its components), we have
$\bar{\sigma}_{x,M}\vee\bar{\sigma}_{x,N}\lesssim 1$ (i.e., $s_1=0$)
by Proposition 2 of \cite{wushao2004} and the assumption that
$\max_j\sigma_{j}<\infty.$ }
\end{remark}

\begin{remark}
{\rm It is known that in the low dimensional setting, the tapered
block bootstrap method yields an improvement over the block
bootstrap in terms of the bias for variance estimation, and thus
provides a better MSE rate; see \cite{pp01}. Hence, we may also
want to combine the blockwise multiplier bootstrap method proposed
here with the data tapering scheme. For example, let $\mathcal{K}$:
$\mathbb{R}\rightarrow\mathbb{R}$ be a data taper with
$\mathcal{K}(x)=0$ for $x\notin [0,1)$. One can consider the
following modification,
\begin{align*}
&T_{\mathcal{K},D}=\max_{1\leq j\leq
p}\frac{1}{\sqrt{n}}\sum^{r}_{i=1}D_{\mathcal{K},ij}, \quad
D_{\mathcal{K},ij}=A_{\mathcal{K},ij}e_i+B_{\mathcal{K},ij}\widetilde{e}_i,
\\&
A_{\mathcal{K},ij}=\sum^{iN+(i-1)M}_{l=(i-1)(N+M)+1}\mathcal{K}\left(\frac{l-(i-1)(N+M)}{N}\right)x_{lj},
\\&
B_{\mathcal{K},ij}=\sum^{i(N+M)}_{l=iN+(i-1)M+1}\mathcal{K}\left(\frac{l-iN+(i-1)M}{M}\right)x_{lj}.
\end{align*}
More detailed investigation along this direction is left for future
study. }
\end{remark}

\subsection{Non-Overlapping Block bootstrap}\label{subsec:block}
In this subsection, we propose an alternative bootstrap procedure in
the high dimensional setting: non-overlapping block bootstrap
(\cite{Carl1986}). In general, this bootstrap procedure may avoid
estimating the influence function (defined in Section~\ref{sec:ts})
in contrast with blockwise multiplier bootstrap. We provide
theoretical justifications for this procedure through establishing
its equivalence with multiplier bootstrap; see
(\ref{eq:equi}).

Assume for simplicity that $n=b_nl_n$, where $b_n,l_n\in\mathbb{Z}$.
Conditional on the sample $\{x_i\}_{i=1}^{n}$, we let
$\varrho_1,\dots,\varrho_{l_n}$ be i.i.d uniform random variables on
$\{0,\dots,l_n-1\}$ and define $x_{(j-1)b_n+i}^*=x_{\varrho_jb_n+i}$
with $1\leq j\leq l_n$ and $1\leq i\leq b_n.$ In other words,
$\{x_i^*\}_{i=1}^{n}$ is a non-overlapping block bootstrap sample
with block size $b_n$. Define
\begin{equation*}
T_{X^*}=\max_{1\leq j\leq
p}\frac{1}{\sqrt{n}}\sum^{n}_{i=1}(x_{ij}^*-\bar{x}_{nj})=\max_{1\leq
j\leq
p}\frac{1}{\sqrt{n}}\sum^{l_n}_{i=1}(\mathcal{A}_{ij}^*-\bar{\mathcal{A}}_{nj}),
\end{equation*}
where $\bar{x}_{nj}=\sum^{n}_{i=1}x_{ij}/n$,
$\bar{\mathcal{A}}_{nj}=\sum^{l_n}_{i=1}\mathcal{A}_{ij}/l_n$, and
$\mathcal{A}_{ij}=\sum^{ib_n}_{l=(i-1)b_n+1}x_{lj},$ and
$\mathcal{A}_{1j}^*,\dots,\mathcal{A}_{l_nj}^*$ are i.i.d draws from
the empirical distribution of
$\mathcal{A}_{1j},\dots,\mathcal{A}_{l_nj}$. Also define
$$T_{\widetilde{X}}=\max_{1\leq j\leq
p}\frac{1}{\sqrt{n}}\sum^{l_n}_{i=1}\mathcal{A}_{ij}e_i,$$ where
$\{e_i\}^{l_n}_{i=1}$ is a sequence of i.i.d $N(0,1)$. Throughout the following discussions, we suppose that
$b_n\geq M.$ The theoretical validity of the multiplier bootstrap
based on $T_{\widetilde{X}}$ can be justified using similar
arguments in the previous subsection because the same arguments go
through when $A_{ij}$ and $B_{ij}$ are replaced by
$\mathcal{A}_{ij}$ (provided that $b_n\geq M$). By showing that with
probability $1-Cn^{-c}$,
\begin{equation}\label{eq:equi}
\sup_{t\in\mathbb{R}}|P(T_{X^*}\leq
t|\{x_i\}^{n}_{i=1})-P(T_{\widetilde{X}}\leq
t|\{x_i\}^{n}_{i=1})|\lesssim n^{-c'},\quad c'>0.
\end{equation}
we establish the validity of non-overlapping block bootstrap in
Theorem \ref{thm:block-boot}.

\begin{assumption}\label{assum:block-boot}
Assume that $\bar{\sigma}_{x,b_n}\sqrt{\log p/l_n}\lesssim n^{-c_0}$
and $\zeta_{x,h,b_n}^2\{\log(pl_n)\}^9/l_n\lesssim n^{-c_0'}$ with
$h(x)=\exp(x)-1$, where $c_0,c_0'>0.$
\end{assumption}

\begin{theorem}\label{thm:block-boot}
Suppose that $c_1<\min_{1\leq j\leq p}\sigma_{j,j}^{(b_n)}\leq
\max_{1\leq j\leq p}\sigma_{j,j}^{(b_n)}<c_2$ and $\max_{1\leq j\leq
p}\sigma_j^2<c_3$ for some constants $0<c_1<c_2<\infty$ and $c_3>0$,
where
$\sigma_{j,j}^{(b_n)}=\sum^{b_n-1}_{l=1-b_n}(b_n-|l|)\gamma_{x,jj}(l)/b_n$.
Further assume that the assumptions in
Theorem \ref{thm:m-dep-boot} or Theorem \ref{thm:dep-boot} hold with
$M=N=b_n$ and $r=n/(2b_n)$. Then (\ref{eq:equi}) holds with
probability $1-Cn^{-c}$ for some $c,C>0$. Moreover, we have
\begin{equation}
\sup_{\alpha\in(0,1)}\left|P(T_X\leq
c_{T_{X}^*}(\alpha))-\alpha\right|\lesssim n^{-c''},
\end{equation}
where $c_{T_{X^*}}(\alpha)=\inf\{t\in\mathbb{R}:P(T_{X^*}\leq
t|\{x_i\}^{n}_{i=1})\geq \alpha\}$ and $c''>0.$
\end{theorem}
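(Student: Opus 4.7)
My plan is to split the proof of Theorem \ref{thm:block-boot} into two pieces. First, I will establish (\ref{eq:equi}) directly, by showing that conditional on $\{x_i\}_{i=1}^n$ the non-overlapping block bootstrap distribution of $T_{X^*}$ is close in Kolmogorov distance to the multiplier bootstrap distribution of $T_{\widetilde{X}}$. Second, I will invoke Theorem \ref{thm:m-dep-boot} or \ref{thm:dep-boot}, applied with $M=N=b_n$ (so that the $B_{ij}$ term vanishes and $A_{ij}$ reduces to $\mathcal{A}_{ij}$), to obtain $\sup_{\alpha}|P(T_X\leq c_{T_{\widetilde X}}(\alpha))-\alpha|\lesssim n^{-c}$. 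Combining the two pieces via standard quantile comparison (together with anti-concentration of the Gaussian maximum $T_Y$) then transfers the coverage guarantee from $c_{T_{\widetilde X}}(\alpha)$ to $c_{T_{X^*}}(\alpha)$, yielding the final polynomial rate $n^{-c''}$.

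\textbf{Proving (\ref{eq:equi}).} Conditional on $\{x_i\}_{i=1}^n$, the resampled block sums $\{\mathcal{A}_i^*-\bar{\mathcal{A}}_n\}_{i=1}^{l_n}$ form an i.i.d. mean-zero sequence drawn from the empirical distribution of $\{\mathcal{A}_i-\bar{\mathcal{A}}_n\}_{i=1}^{l_n}$. I would therefore apply the i.i.d. Gaussian approximation result of Chernozhukov et al. (2013) (equivalently, Proposition \ref{prop1} with $D_n=1$) conditionally on the data, producing a Gaussian maximum $T_{G^*}$ whose Kolmogorov distance to $T_{X^*}$ is controlled by conditional moments of the $\mathcal{A}_i$'s. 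The covariance matrix of $T_{G^*}$ equals $\widehat{\Sigma}^*:=\frac{1}{n}\sum_{i=1}^{l_n}\mathcal{A}_i\mathcal{A}_i'-\frac{l_n}{n}\bar{\mathcal{A}}_n\bar{\mathcal{A}}_n'$, while $T_{\widetilde X}$ is, already conditionally, Gaussian with covariance $\widehat{\Sigma}:=\frac{1}{n}\sum_{i=1}^{l_n}\mathcal{A}_i\mathcal{A}_i'$. Since $\bar{\mathcal{A}}_n=b_n\bar x_n$, the max-entry gap satisfies $\max_{j,k}|\widehat{\Sigma}^*_{jk}-\widehat{\Sigma}_{jk}|=b_n\max_{j,k}|\bar x_{nj}\bar x_{nk}|=O_{\p}(b_n\log p/n)$ by the Gaussian approximation of Section \ref{subsec:weak-dep} applied to the sample mean. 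A Gaussian comparison inequality in the spirit of Lemma 3.1 of Chernozhukov et al. (2013) then bounds the Kolmogorov distance between $T_{G^*}$ and $T_{\widetilde X}$ by $O((b_n\log p/n)^{1/3}(\log p)^{2/3})=O(n^{-c'})$ under the rate assumptions on $b_n$ and $p$, which gives (\ref{eq:equi}).

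\textbf{From (\ref{eq:equi}) to the final bound.} Once (\ref{eq:equi}) is in hand, standard quantile-inversion shows that with probability at least $1-Cn^{-c}$ the critical values $c_{T_{X^*}}(\alpha)$ and $c_{T_{\widetilde X}}(\alpha\pm n^{-c'})$ satisfy sandwich inequalities. Combining with the multiplier bootstrap validity (the analog of Theorem \ref{thm:m-dep-boot} or \ref{thm:dep-boot} obtained by setting $M=N=b_n$, so that Assumption \ref{assum:block-boot} plays the role of Assumption \ref{assum:boot-m-dep} under Condition 1), together with Gaussian anti-concentration for $T_Y$ whose coordinates have uniformly bounded variances (ensured by $c_1<\min_j\sigma_{j,j}^{(b_n)}\le\max_j\sigma_{j,j}^{(b_n)}<c_2$), yields $\sup_{\alpha\in(0,1)}|P(T_X\le c_{T_{X^*}}(\alpha))-\alpha|\lesssim n^{-c''}$ for some $c''>0$, as required.

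\textbf{Main obstacle.} The principal difficulty is verifying the moment and tail hypotheses of Proposition \ref{prop1} conditionally on the data with high probability, so that the conditional Gaussian approximation in Step 2 produces a genuinely polynomial error. Concretely, one must control $\max_{1\leq i\leq l_n}\max_{1\leq j\leq p}|\mathcal{A}_{ij}-\bar{\mathcal{A}}_{nj}|$ and empirical third moments $\frac{1}{l_n}\sum_{i=1}^{l_n}|\mathcal{A}_{ij}-\bar{\mathcal{A}}_{nj}|^3$ uniformly in $j$, using only the unconditional moment structure of $\{x_i\}$. These controls parallel the ones needed for Lemma \ref{lemma:m-dep-boot}: the Orlicz bound $\zeta_{x,h,b_n}$ together with the maximal inequality handles the sup-norm under Condition 1, while $\varsigma_{x,b_n}$ handles the $L^4$ route under Condition 2, and Assumption \ref{assum:block-boot} is precisely what makes the resulting rates agree with the $n^{-c}$ rate coming from the multiplier bootstrap step. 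Once this conditional-moment bookkeeping is done, the covariance-perturbation computation of order $b_n\log p/n$ and the subsequent Gaussian comparison are routine.
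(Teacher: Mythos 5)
Your proposal is correct and follows essentially the same route as the paper's proof: condition on the data, apply the i.i.d.\ Gaussian approximation of Chernozhukov et al.\ (2013) to the resampled block sums to get a conditional Gaussian maximum with the centered empirical block covariance (the paper's $\breve{T}_{\widetilde{X}}$, your $T_{G^*}$), control the conditional moment/tail quantities exactly as in Lemma \ref{lemma:m-dep-boot} under Assumption \ref{assum:block-boot}, and then transfer to $T_{\widetilde{X}}$ and conclude via quantile sandwiching with Theorem \ref{thm:m-dep-boot} or \ref{thm:dep-boot}. The one sub-step where you diverge is the centering discrepancy: the paper exploits the coupling through the common multipliers to bound $|\breve{T}_{\widetilde{X}}-T_{\widetilde{X}}|\leq \max_j|\bar{\mathcal{A}}_j/\sqrt{b_n}|\,|l_n^{-1/2}\sum_i e_i|$ pathwise and then applies Gaussian anti-concentration (giving roughly $\log p/\sqrt{l_n}$), whereas you apply a Gaussian covariance-comparison inequality to the max-entry gap $b_n\max_{j,k}|\bar{x}_{nj}\bar{x}_{nk}|=O_{\p}(b_n\log p/n)$ (giving roughly $\log p/l_n^{1/3}$); both are valid and polynomially small under the stated assumptions, the paper's being marginally sharper. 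One small imprecision: with $M=N=b_n$ the $B_{ij}$ blocks do not vanish — rather the $A$ and $B$ blocks together become the $l_n=2r$ non-overlapping blocks $\mathcal{A}_{ij}$, each carrying its own independent multiplier, which is why the multiplier-bootstrap theory transfers; this does not affect the argument.
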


\section{General Inferential Theory}\label{sec:ts}
In this section, we establish a general framework of conducting
bootstrap inference for high dimensional time series based on the
theoretical results in Section~\ref{sec:boot}. This general
framework assumes that the $q$-dimensional quantity of interest,
denoted as $\Theta_{0q}$, admits an approximately linear expansion,
and thus covers three examples considered in
Section~\ref{sec:statappl}. In particular, $\Theta_{0q}$ is
expressed as a functional of the distribution of a $p$-dimensional
\emph{weakly dependent} stationary time series
$\{u_i\}.$\footnote{Note that $p$ here is different from the
dimension of $x_i$ discussed in previous sections.}
Motivated by the testing on spectral properties, we
further extend the results in Section~\ref{subsec:als} to an
infinite dimensional parameter case in
Section~\ref{subsec:infinite}.

\subsection{Approximately linear statistics}\label{subsec:als}
In this subsection, we consider the quantities that can be expressed as
functionals of the marginal distribution of a block time series with
length $d_0$: $\{v_i\}_{i=1}^{N_0}$, where
$v_i:=(u_i,\dots,u_{i+d_0-1})'$ and $N_0=n-d_0+1$. Here, we allow
the integer $d_0$ to grow with $n$. Define
$F_{d_0}^{N_0}=\sum^{N_0}_{i=1}\delta_{v_{i}}/N_0$ as the empirical
distribution for $\{v_i\}_{i=1}^{N_0}$. The distribution function of
$v_1$ is denoted as $F_{d_0}$. We are interested in testing the
parameter
$\Theta_{q_0}=(\theta_{1},\dots,\theta_{q_0})':=\mathcal{T}(F_{d_0})$
for some functional $\mathcal{T}:=\mathcal{T}_{q_0,d_0}$. The
parameter dimension $q_0$ depends on either $p$ or $d_0$, e.g.,
$q_0=p, p^2$ or $d_0p^2$. A natural estimator for $\Theta_{q_0}$ is
then given by
$\widehat{\Theta}_{q_0}=(\widehat{\theta}_{1},\dots,\widehat{\theta}_{q_0})':=\mathcal{T}(F_{d_0}^{N_0})$.

Assume $
\widehat{\Theta}_{q_0}$ admits the following approximately linear
expansion in a neighborhood of $F_{d_0}$:
\begin{equation}\label{eq:lin}
\widehat{\Theta}_{q_0}=\Theta_{q_0}+\frac{1}{N_0}\sum^{N_0}_{i=1}IF(v_i,F_{d_0})+\mathcal{R}_{N_0},
\end{equation}
where
$IF(v_i,F_{d_0})=(IF_1(v_i,F_{d_0}),\dots,IF_{q_0}(v_i,F_{d_0}))'$
is called ``influence function" (see e.g. \cite{hrrs86}) and
$\mathcal{R}_{N_0}:=\mathcal{R}_{N_0}(v_1,\dots,v_{N_0})=(\mathcal{R}_{1N_0},\dots,\mathcal{R}_{q_0N_0})'$
is a remainder term. Examples of approximately linear statistics
include various location and scale estimators for the marginal
distribution of $\{u_i\}$, von Mises statistics and $M$-estimators
of time series models (see \cite{kun89}).

We are interested in testing the null hypothesis $H_0:
\Theta_{q_0}=\widetilde{\Theta}_{q_0}$ versus the alternative $H_a:
\Theta_{q_0}\neq \widetilde{\Theta}_{q_0}$, where
$\widetilde{\Theta}_{q_0}=(\widetilde{\theta}_{1},\dots,\widetilde{\theta}_{q_0})'$.
The test is proposed as
\begin{equation}\label{eq:test}
\phi(\widehat{\Theta}_{q_0};c(\alpha))=
\begin{cases}
1,\quad &\max_{1\leq j\leq q_0}\sqrt{N_0}|\widehat{\theta}_{j}-\widetilde{\theta}_j|\geq c(\alpha), \\
0,\quad &\text{otherwise}. \end{cases}
\end{equation}
We next apply the bootstrap theory in Section~\ref{sec:boot} to
obtain the critical value $c(\alpha)$. Specifically, we define
$x_i=(IF(v_i,F_{d_0})',-IF(v_i,F_{d_0})')'$ and
$\widehat{x}_i=(\widehat{IF}(v_i,F_{d_0}^{N_0})',-\widehat{IF}(v_i,F_{d_0}^{N_0})')'$,
where $\widehat{IF}(v_i,F_{d_0}^{N_0})$ is some estimate of
$IF(v_i,F_{d_0})$. Suppose $N_0=(N_1+M_1)r_1$, where $N_1\geq M_1$
and $N_1,M_1,r_1\rightarrow +\infty$ as $N_0\rightarrow+\infty.$
Define the estimated block sums
\begin{equation}\label{eq:app-AB}
\widehat{A}_{ij}=\sum^{iN_1+(i-1)M_1}_{l=(i-1)(N_1+M_1)+1}\widehat{x}_{lj},\quad
\widehat{B}_{ij}=\sum^{i(N_1+M_1)}_{l=iN_1+(i-1)M_1+1}\widehat{x}_{lj},
\end{equation}
where $1\leq i\leq r_1$ and $1\leq j\leq 2q_0$. Let
\begin{align*}
T_{\widehat{D}}=\max_{1\leq j\leq
2q_0}\frac{1}{\sqrt{n}}\sum^{r_1}_{i=1}\widehat{D}_{ij},
\end{align*}
where
$\widehat{D}_{ij}=\widehat{A}_{ij}e_i+\widehat{B}_{ij}\widetilde{e}_i$
with $\{(e_{i},\widetilde{e}_i)\}$ being a sequence of i.i.d
$N(0,I_2)$ independent of $\{u_i\}$. The
bootstrap critical value is given by
\begin{equation}
c_{1}(\alpha):=\inf\{t\in\mathbb{R}:P(T_{\widehat{D}}\leq
t|\{x_i\}^{n}_{i=1})\geq 1-\alpha\}.
\end{equation}

We next justify the validity of the test in (\ref{eq:test}) with
$c(\alpha)=c_1(\alpha)$ in Theorems~\ref{thm:app-1}
and~\ref{thm:bandtest}.
\begin{assumption}\label{assum:app-1}
Assume that $P(\max_{1\leq j\leq
q_0}\sqrt{N_0}|\mathcal{R}_{jN_0}|>C_1n^{-c_1}/\sqrt{\log(2q_0)})<C_1n^{-c_1}$
and $P(\mathcal{E}_{AB}\{\log(2q_0)\}^2>C_2n^{-c_2})\leq
C_2n^{-c_2}$, where $c_1,C_1,c_2,C_2>0$, and
\begin{align*}
\mathcal{E}_{AB}=\max_{1\leq j\leq
2q_0}\left|\frac{1}{n}\sum^{r}_{i=1}\{(A_{ij}-\widehat{A}_{ij})^2+(B_{ij}-\widehat{B}_{ij})^2\}\right|,
\end{align*}
with $A_{ij}=\sum^{iN_1+(i-1)M_1}_{l=(i-)(N_1+M_1)+1}x_{lj}$ and
$B_{ij}=\sum^{i(N_1+M_1)}_{l=iN_1+(i-1)M_1+1}x_{lj}$.
\end{assumption}

\begin{theorem}\label{thm:app-1}
Suppose the assumptions in Theorem \ref{thm:m-dep-boot} or Theorem
\ref{thm:dep-boot} hold for $\{x_i\}$, where $p$ is replaced by
$2q_0$. Then under Assumption \ref{assum:app-1} and $H_0$, we have
\begin{equation}
\sup_{\alpha\in (0,1)}\left|P\left(\max_{1\leq j\leq
q_0}\sqrt{N_0}|\widehat{\theta}_{j}-\widetilde{\theta}_j|\geq
c_{1}(\alpha)\right)-\alpha\right|\lesssim n^{-c},\quad c>0.
\end{equation}
\end{theorem}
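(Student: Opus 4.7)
My plan is to view the test statistic as the maximum of a partial sum of the $2q_0$-dimensional vectors $x_i = (IF(v_i,F_{d_0})', -IF(v_i,F_{d_0})')'$, apply the blockwise multiplier bootstrap validity of Section~\ref{sec:boot} to this oracle version, and then close the gap between the oracle quantile $c_{T_D}(1-\alpha)$ and the feasible quantile $c_1(\alpha)=c_{T_{\widehat D}}(1-\alpha)$ by comparing two Gaussian maxima whose conditional covariances differ by at most $\mathcal{E}_{AB}$.

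First, under $H_0$ the expansion (\ref{eq:lin}) yields $\sqrt{N_0}(\widehat\theta_j-\widetilde\theta_j)=N_0^{-1/2}\sum_{i=1}^{N_0}IF_j(v_i,F_{d_0})+\sqrt{N_0}\mathcal{R}_{jN_0}$. Writing $|a|=\max(a,-a)$ and using the definition of $x_i$ gives
$$\max_{1\leq j\leq q_0}\sqrt{N_0}|\widehat\theta_j-\widetilde\theta_j|=T_X+R_n,\qquad |R_n|\leq \max_{1\leq j\leq q_0}\sqrt{N_0}|\mathcal{R}_{jN_0}|,$$
where $T_X=\max_{1\leq j\leq 2q_0}N_0^{-1/2}\sum_{i=1}^{N_0}x_{ij}$. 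Assumption~\ref{assum:app-1} shows $|R_n|\leq C_1 n^{-c_1}/\sqrt{\log(2q_0)}$ off an event of probability $C_1 n^{-c_1}$. Because the hypotheses of Theorem~\ref{thm:m-dep-boot} (or Theorem~\ref{thm:dep-boot}) hold for $\{x_i\}$ with $p$ replaced by $2q_0$, the oracle multiplier bootstrap built from the exact block sums $A_{ij},B_{ij}$ is valid, giving $\sup_\alpha |P(T_X\leq c_{T_D}(1-\alpha))-(1-\alpha)|\lesssim n^{-c_0}$.

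Next I would pass from $c_{T_D}$ to $c_1(\alpha)=c_{T_{\widehat D}}(1-\alpha)$. Conditional on the data both $T_D$ and $T_{\widehat D}$ are centered Gaussian maxima whose covariance matrices are $\Xi=n^{-1}\sum_i(A_iA_i'+B_iB_i')$ and $\widehat\Xi=n^{-1}\sum_i(\widehat A_i\widehat A_i'+\widehat B_i\widehat B_i')$. Using $\widehat a\widehat b-ab=(\widehat a-a)\widehat b+a(\widehat b-b)$, Cauchy--Schwarz, and the diagonal bounds already guaranteed inside Theorems~\ref{thm:m-dep-boot} / \ref{thm:dep-boot} via Lemma~\ref{lemma:m-dep-boot}, I obtain
$$\nu_n:=\max_{j,k}|\widehat\Xi_{jk}-\Xi_{jk}|\lesssim \sqrt{\mathcal{E}_{AB}\,\max_j\Xi_{jj}}+\mathcal{E}_{AB},$$
so $\nu_n\{\log(2q_0)\}^2\lesssim n^{-c_2/2}$ on an event of probability at least $1-C_2 n^{-c_2}$ by Assumption~\ref{assum:app-1}. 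Plugging this into the Gaussian comparison bound underlying Lemma~3.2 of \cite{cck13}, $\sup_t|P(T_D\leq t\mid\text{data})-P(T_{\widehat D}\leq t\mid\text{data})|\lesssim \pi(\nu_n)$ with $\pi(\nu)=\nu^{1/3}\{1\vee\log(2q_0/\nu)\}^{2/3}$, yields $|c_1(\alpha)-c_{T_D}(1-\alpha+O(n^{-c'}))|=o(1)$ in the appropriate quantile sense, on a set of probability $1-O(n^{-c})$.

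Finally, decomposing $P(T_X+R_n\geq c_1(\alpha))=P(T_X\geq c_1(\alpha)-R_n)$ and invoking Nazarov's anti-concentration inequality for the Gaussian maximum $T_Y$ (whose law is $O(n^{-c_0})$-close to that of $T_X$ in Kolmogorov distance by Theorem~\ref{thm:Gaussian-dep} or Theorem~\ref{thm2}), the $O(n^{-c_1}/\sqrt{\log(2q_0)})$ shift from $R_n$ contributes only $O(n^{-c_1})$. Collecting the rates $n^{-c_0}$, $n^{-c'}$, and $n^{-c_1}$ gives the claimed $n^{-c}$ bound uniformly in $\alpha$. The main obstacle is the oracle-to-feasible replacement in the middle paragraph: the Gaussian comparison has to survive the Nazarov $(\log q_0)^{2/3}$ penalty, which is exactly why Assumption~\ref{assum:app-1} attaches the factor $\{\log(2q_0)\}^2$ to $\mathcal{E}_{AB}$, so that $\pi(\nu_n)$ remains polynomial in $n$ when $q_0$ is allowed to grow nearly exponentially.
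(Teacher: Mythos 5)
Your overall architecture matches the paper's: reduce the test statistic to $T_X$ plus a remainder controlled by Assumption \ref{assum:app-1}, invoke Theorem \ref{thm:m-dep-boot}/\ref{thm:dep-boot} for the oracle bootstrap, close the gap between $c_{T_D}$ and $c_1(\alpha)$, and absorb the remainder shift via Gaussian anti-concentration. The first, second and fourth steps are fine. The problem is the oracle-to-feasible step. You discard the coupling between $T_D$ and $T_{\widehat D}$ and instead compare their conditional laws through the covariance discrepancy $\nu_n=\max_{j,k}|\widehat\Xi_{jk}-\Xi_{jk}|$. Cauchy--Schwarz only gives $\nu_n\lesssim\sqrt{\mathcal{E}_{AB}}$ (the cross-terms $(\widehat A_{ij}-A_{ij})\widehat A_{ik}$ force the square root), and the Gaussian comparison then yields $\pi(\nu_n)\lesssim \mathcal{E}_{AB}^{1/6}\{\log(2q_0)\}^{2/3}$. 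Under Assumption \ref{assum:app-1}, $\mathcal{E}_{AB}\lesssim n^{-c_2}\{\log(2q_0)\}^{-2}$, so this is only $n^{-c_2/6}\{\log(2q_0)\}^{1/3}$. When $\log(2q_0)\asymp n^{b}$ (which the theorem permits, with $b$ up to $1/15$) this fails to vanish unless $c_2>2b$ --- a condition not implied by Assumption \ref{assum:app-1}, which only requires $c_2>0$. Also, your intermediate claim $\nu_n\{\log(2q_0)\}^2\lesssim n^{-c_2/2}$ overstates what the assumption delivers by a factor of $\log(2q_0)$; the correct statement is $\nu_n\log(2q_0)\lesssim n^{-c_2/2}$, and even that does not rescue the $\nu^{1/3}$ loss.

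The paper avoids this entirely by exploiting that $T_D$ and $T_{\widehat D}$ are built from the \emph{same} multipliers $(e_i,\widetilde e_i)$: by the Lipschitz property of the maximum,
\begin{equation*}
|T_D-T_{\widehat D}|\leq \max_{1\leq j\leq 2q_0}\left|\frac{1}{\sqrt{n}}\sum_{i=1}^{r_1}(D_{ij}-\widehat D_{ij})\right|,
\end{equation*}
and conditionally on the data each coordinate is $N\bigl(0,\tfrac{1}{n}\sum_i\{(A_{ij}-\widehat A_{ij})^2+(B_{ij}-\widehat B_{ij})^2\}\bigr)$ with variance at most $\mathcal{E}_{AB}$, whence $\E[|T_D-T_{\widehat D}|\mid\{x_i\}]\lesssim\sqrt{\mathcal{E}_{AB}\log(2q_0)}$. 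Markov's inequality then gives $|T_D-T_{\widehat D}|\leq \zeta_1:=Cn^{-c}/\sqrt{\log(2q_0)}$ with conditional probability $1-\zeta_2$ on an event of probability $1-Cn^{-c}$, and Lemma 3.3 of Chernozhukov et al.\ (2013) converts this into a quantile comparison; the subsequent anti-concentration step multiplies $\zeta_1$ by $\sqrt{\log(2q_0)}$, which is exactly what the $\{\log(2q_0)\}^2$ weight on $\mathcal{E}_{AB}$ in Assumption \ref{assum:app-1} is calibrated to absorb --- not, as you suggest, the Nazarov penalty in a distributional comparison. Replacing your middle paragraph with this coupling argument repairs the proof.
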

Theorem~\ref{thm:app-1} applies directly to the methods described in
Sections~\ref{sec:ucb}-\ref{subsec:cov} for both M-dependent and
weakly dependent stationary time series. For example, consider the
white noise testing problem in Section \ref{subsec:cov}. Suppose $\E
u_i=0$. In this example,
$\Theta_{q_0}=(\text{vec}(\gamma_u(1))',\dots,\text{vec}(\gamma_u(L))')'$
with $\gamma_u(h)=\E u_{i}u_{i+h}'$ and $q_0=Lp^2$. Then we have
$IF(v_i,F_{d_0})=\nu_i-\Theta_{q_0}$ and
$\widehat{IF}(v_i,F_{d_0}^{N_0})=\nu_i-\sum^{N_0}_{i=1}\nu_i/n$ with
$\nu_i=(\text{vec}(u_{i}u_{i+1}')',\dots,\text{vec}(u_{i}u_{i+L}')')'$
and $N_0=n-L$. Note that the bootstrap procedures considered in
Section \ref{sec:statappl} are in fact simplified versions of the
blockwise multiplier bootstrap in Section \ref{sec:boot} with
$N=M=b_n$ and $r=l_n/2$.

Our next theorem covers the problem of testing the bandedness of
covariance matrix in Section \ref{sec:bandtest}. Recall that $$T_{band}=\max_{|j-k|\geq
\iota}\frac{1}{\sqrt{n}}\left|\sum^{n}_{i=1}(u_{ij}u_{ik})/\sqrt{\widehat{\gamma}_{u,jj}(0)\widehat{\gamma}_{u,kk}(0)}\right|,$$
where $\widehat{\gamma}_{u,jk}(0)=\sum^{n}_{i=1}u_{ij}u_{ik}/n$.
With some abuse of notation, let
$x_i=(\widetilde{u}_{i1}\widetilde{u}_{i1},\dots,\widetilde{u}_{i1}\widetilde{u}_{ip},\dots,\widetilde{u}_{ip}\widetilde{u}_{i1},\\
\dots,\widetilde{u}_{ip}\widetilde{u}_{ip})$
with $\widetilde{u}_{ij}=u_{ij}/\sqrt{\gamma_{u,jj}(0)}$.

\begin{theorem}\label{thm:bandtest}
Suppose the assumptions in Theorem \ref{thm:m-dep-boot} or Theorem
\ref{thm:dep-boot} hold for $\{x_i\}$, where $p$ is replaced by the
cardinality of the set $\{1\leq j,k\leq p: |j-k|\geq \iota\}$. Then
under Assumption \ref{assum:band} in the supplementary material and
$H_0$, we have
\begin{equation}
\sup_{\alpha\in (0,1)}\left|P\left(T_{band}\geq
c_{band}(\alpha)\right)-\alpha\right|\lesssim n^{-c},\quad c>0,
\end{equation}
where $c_{band}(\alpha)$ is given in Section~\ref{sec:bandtest}.
\end{theorem}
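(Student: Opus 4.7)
My plan is to cast $T_{band}$ as an approximately linear statistic and then invoke Theorem~\ref{thm:app-1}. With the oracle vectors $x_i$ (normalized by the true diagonal variances $\gamma_{u,jj}(0)$) as defined in the theorem, let
$$T_X = \max_{|j-k|\geq \iota}\frac{1}{\sqrt n}\left|\sum_{i=1}^n x_{i,jk}\right|.$$
Under $H_0$, $\gamma_{u,jk}(0)=0$ for $|j-k|\geq \iota$, so the summands are centered, and the Gaussian approximation together with the blockwise multiplier bootstrap of Theorem~\ref{thm:dep-boot} (or Theorem~\ref{thm:m-dep-boot}) applies directly to $T_X$. The core of the proof is then to show that both the test statistic $T_{band}$ and the bootstrap statistic $T_{band,\widehat A}$ differ from their oracle counterparts by $O_p(n^{-c})$.

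First I would establish a uniform concentration bound for the plug-in normalization: $\max_{1\leq j\leq p}|\widehat\gamma_{u,jj}(0)/\gamma_{u,jj}(0) - 1| = O_p(n^{-1/2}\sqrt{\log p})$. This follows from the moment conditions and physical dependence hypotheses inherited from Theorem~\ref{thm:dep-boot}, together with a Nagaev--Fuk type maximal inequality for sums of weakly dependent vectors (the same device underlying Lemma~\ref{lemma:m-dep-boot}). A first-order Taylor expansion of $1/\sqrt{\widehat\gamma_{u,jj}(0)\widehat\gamma_{u,kk}(0)}$ around the true variances then gives
$$T_{band} - T_X = \max_{|j-k|\geq \iota}O_p\bigl(n^{-1/2}\sqrt{\log p}\bigr)\cdot \frac{1}{\sqrt n}\Bigl|\sum_{i=1}^n u_{ij}u_{ik}\Bigr|.$$
Since $\gamma_{u,jk}(0)=0$ under $H_0$ for $|j-k|\geq \iota$, a maximal inequality gives the last factor as $O_p(\sqrt{\log p})$, whence $|T_{band}-T_X|=O_p(n^{-1/2}\log p)=O(n^{-c})$ under the exponential growth of $p$ permitted by Assumption~\ref{assum:boot-m-dep}. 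The anti-concentration of $T_Y$ from \cite{cck13} (used in Theorem~\ref{thm:dep-boot}) transfers distributional approximation from $T_X$ to $T_{band}$.

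Next I would handle the bootstrap side. The oracle blockwise multiplier bootstrap uses the block sums $A_{i,jk},B_{i,jk}$ built from $x_{i,jk}$, while the implemented version uses $\widehat A_{i,jk},\widehat B_{i,jk}$ built from the sample-variance-normalized and centered $u_{ij}u_{ik}$. Applying the same Taylor expansion blockwise and invoking the variance concentration bound above, one obtains
$$\mathcal{E}_{AB} := \max_{|j-k|\geq\iota}\frac{1}{n}\sum_{i=1}^{r_1}\bigl\{(A_{i,jk}-\widehat A_{i,jk})^2+(B_{i,jk}-\widehat B_{i,jk})^2\bigr\} = O_p(n^{-c}),$$
which verifies Assumption~\ref{assum:app-1}; the linearization remainder $\mathcal R_{N_0}$ obeys the same kind of bound because under $H_0$ the off-band entries have zero mean. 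Applying Theorem~\ref{thm:app-1} with $q_0 = |\{(j,k):|j-k|\geq \iota\}|$ (of order $p^2$, still within the exponential tolerance of Assumption~\ref{assum:boot-m-dep} upon a mild contraction of the exponent $b$) yields the stated polynomial rate $\sup_\alpha|P(T_{band}\geq c_{band}(\alpha))-\alpha|\lesssim n^{-c}$.

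The main obstacle is the uniform control of $\mathcal{E}_{AB}$: the perturbation induced by sample-variance normalization is not mean-zero and interacts multiplicatively with the block sums, so one needs simultaneously a block-level maximal inequality for $\sum_l u_{lj}u_{lk}$ over $O(p^2)$ pairs and a tight tail bound on $\max_j|\widehat\gamma_{u,jj}(0)-\gamma_{u,jj}(0)|$. Assumption~\ref{assum:band} in the supplement is tailored precisely to deliver these quantitative bounds (controlling higher cumulants and physical dependence of $x_i$ uniformly in $p$); once it is in place, the remaining steps are direct analogues of those used to derive Theorem~\ref{thm:app-1}.
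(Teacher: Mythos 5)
Your proposal is correct and follows essentially the same route as the paper: the paper explicitly skips the proof, stating it is analogous to that of Theorem~\ref{thm:app-1}, with the two conditions of Assumption~\ref{assum:band} playing exactly the roles you assign them (the first controls the remainder induced by replacing $\gamma_{u,jj}(0)$ with $\widehat{\gamma}_{u,jj}(0)$, the second controls $\mathcal{E}_{AB}$ for the block sums), after which the oracle statistic $T_X$ and oracle bootstrap $T_D$ are handled by Theorem~\ref{thm:m-dep-boot} or~\ref{thm:dep-boot} with $p$ replaced by the cardinality of the off-band index set. Your additional sketch of how the variance-concentration and maximal-inequality bounds would be established is the content of the paper's Lemma~\ref{lemma:band}, which verifies Assumption~\ref{assum:band} under primitive conditions rather than being part of the theorem's proof itself.
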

The proof of Theorem \ref{thm:bandtest} is similar as that of
Theorem~\ref{thm:app-1}, and thus skipped. In Section
\ref{subsec:band}, we show that Assumption \ref{assum:band} can be
verified under suitable primitive conditions.

To avoid direct estimation of the influence function,
we may alternatively apply the non-overlapping block bootstrap procedure in
Section~\ref{subsec:block}. Assume for simplicity that $N_0=b_nl_n$,
where $b_n,l_n\in\mathbb{Z}$. Let $\varrho_1,\dots,\varrho_{l_n}$ be
i.i.d uniform random variables on $\{0,\dots,l_n-1\}$ and define
$v_{(j-1)b_n+i}^*=v_{\varrho_jb_n+i}$ with $1\leq j\leq l_n$ and
$1\leq i\leq b_n.$ Compute the block bootstrap estimate
$\widehat{\Theta}_{q_0}^*$ based on the bootstrap sample
$\{v_i^*\}_{i=1}^{N_0}$. Let $c_{2}(\alpha)$ be the
$100(1-\alpha)$th quantile of the distribution of $\max_{1\leq j\leq
q_0}\sqrt{N_0}|\widehat{\theta}_{j}^*-\widehat{\theta}_j|$
conditional on the sample $\{u_i\}$. In what follows, we further
justify the validity of the non-overlapping block bootstrap in the
same framework.

\begin{assumption}\label{assum:app-2}
Assume that
$$P\left(P\left(\sqrt{N_0}\max_{1\leq j\leq q_0}|\mathcal{R}_{jN_0}^*-\mathcal{R}_{jN_0}|>C_3n^{-c_3}/\sqrt{\log(2q_0)}\bigg|\{u_i\}^{n}_{i=1}\right)>C_4n^{-c_4}\right)\leq
C_4n^{-c_4},$$ where
$\mathcal{R}_{N_0}^*=(\mathcal{R}_{1N_0}^*,\dots,\mathcal{R}_{q_0N_0}^*)=\mathcal{R}_{N_0}(v_1^*,\dots,v_{N_0}^*),$ and
$c_3,C_3,c_4,C_4>0.$
\end{assumption}

\begin{theorem}\label{thm:app-2}
Suppose the assumptions in Theorem \ref{thm:block-boot} hold for
$\{x_i\}$, where $p$ is replaced by $2q_0$. Then under Assumptions
\ref{assum:app-1}-\ref{assum:app-2}, we have
\begin{equation}\label{thm5.3}
\left|P\left(\max_{1\leq j\leq
q_0}\sqrt{N_0}|\widehat{\theta}_{j}-\widetilde{\theta}_j|\geq
c_{2}(\alpha)\right)-\alpha\right|\lesssim n^{-c},\quad c>0.
\end{equation}
\end{theorem}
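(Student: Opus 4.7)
The plan is to reduce Theorem \ref{thm:app-2} to the validity of the non-overlapping block bootstrap for the $2q_0$-dimensional influence function process, which is exactly what Theorem \ref{thm:block-boot} delivers once we set $p=2q_0$ and work with the sequence $\{x_i\}$ defined via $x_i = (IF(v_i,F_{d_0})', -IF(v_i,F_{d_0})')'$. Note that $\max_{1\leq j\leq q_0}|a_j| = \max_{1\leq j\leq 2q_0} b_j$ for $b = (a',-a')'$, so under $H_0$ the approximately linear expansion (\ref{eq:lin}) yields
\begin{equation*}
\max_{1\leq j\leq q_0}\sqrt{N_0}|\widehat\theta_j-\widetilde\theta_j|
= T_X + R_1,\qquad T_X=\max_{1\leq j\leq 2q_0}\frac{1}{\sqrt{N_0}}\sum_{i=1}^{N_0}x_{ij},
\end{equation*}
with $|R_1|\le \sqrt{N_0}\max_j|\mathcal{R}_{jN_0}|$. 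Assumption \ref{assum:app-1} guarantees $\sqrt{\log(2q_0)}\,|R_1|=O_P(n^{-c_1})$.

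Next I would treat the bootstrap statistic in the same spirit, working conditionally on $\{u_i\}_{i=1}^n$. Regarding $F_{d_0}^{N_0}$ as the ``truth'' in the bootstrap world, applying the linear expansion to $\widehat\Theta_{q_0}^*=\mathcal{T}(F_{d_0}^{N_0,*})$ gives
\begin{equation*}
\max_{1\leq j\leq q_0}\sqrt{N_0}|\widehat\theta_j^*-\widehat\theta_j|
= T_{\widehat X^*}+R_2^*,\qquad T_{\widehat X^*}=\max_{1\leq j\leq 2q_0}\frac{1}{\sqrt{N_0}}\sum_{i=1}^{N_0}\widehat x_{ij}^*,
\end{equation*}
where $\widehat x_i^* = (\widehat{IF}(v_i^*,F_{d_0}^{N_0})', -\widehat{IF}(v_i^*,F_{d_0}^{N_0})')'$ and $R_2^*$ involves $\mathcal{R}_{N_0}^*-\mathcal{R}_{N_0}$. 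Assumption \ref{assum:app-2} exactly bounds this bootstrap remainder, yielding $\sqrt{\log(2q_0)}\,|R_2^*|=O_P(n^{-c_3})$ with inner probability at least $1-C_4n^{-c_4}$. Now Theorem \ref{thm:block-boot} applied to $\{x_i\}$ (with $p$ replaced by $2q_0$) shows that the non-overlapping block bootstrap constructed from the \emph{true} influence function, i.e.\ the quantile $c_{T_{X^*}}(\alpha)$ of $T_{X^*}$, is valid for $T_X$: $\sup_\alpha|P(T_X\ge c_{T_{X^*}}(\alpha))-\alpha|\lesssim n^{-c}$.

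The remaining task is to bridge $T_{\widehat X^*}$ (computed from the \emph{estimated} $\widehat{IF}$) and $T_{X^*}$ (computed from the unknown $IF$). I would invoke a Gaussian comparison argument of the same flavor as Lemma 3.2 of Chernozhukov et al.: conditional on the data, the discrepancy between the two block-bootstrap laws is controlled by the sup-norm mismatch between their conditional covariances, which is precisely the quantity $\mathcal{E}_{AB}$ introduced in Assumption \ref{assum:app-1}. Since $\mathcal{E}_{AB}\{\log(2q_0)\}^2=O_P(n^{-c_2})$, the conditional Kolmogorov distance between $T_{\widehat X^*}$ and $T_{X^*}$ is $O_P(n^{-c/3})$ by the standard $\nu^{1/3}(\log\cdot)^{2/3}$ tradeoff used throughout Section \ref{subsec:bmb}. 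Finally, three applications of the Gaussian anti-concentration inequality for maxima, one each for $R_1$, $R_2^*$, and the $\mathcal{E}_{AB}$-induced quantile mismatch, translate these bounds into a bound on $|P(\max_j\sqrt{N_0}|\widehat\theta_j-\widetilde\theta_j|\ge c_2(\alpha))-\alpha|$, giving the claim.

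The main obstacle is the third step: $\widehat x_i^*$ is a block-resample of the \emph{plug-in} process $\{\widehat x_i\}$, and yet one must compare its bootstrap law to that of the unobservable $\{x_i\}$. The natural quantity to control is the conditional covariance mismatch of the block sums, but this introduces cross terms between the estimation error $\widehat{IF}-IF$ and the sampling fluctuations of $\{v_i\}$. Assumption \ref{assum:app-1} is crafted precisely so that $\mathcal{E}_{AB}$ aggregates these effects into a single manageable quantity; verifying this assumption in concrete applications (e.g.\ the bandedness test in Section \ref{sec:bandtest}, which requires centering by $\sqrt{\widehat\gamma_{jj}\widehat\gamma_{kk}}$) is the most delicate piece and will mirror the analysis of $\mathcal{E}_{AB}$ carried out via Lemma \ref{lemma:m-dep-boot}.
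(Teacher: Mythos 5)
Your overall skeleton (reduce the original statistic to $T_X$ via Assumption \ref{assum:app-1}, reduce the bootstrap statistic to a block bootstrap of the influence-function process, invoke Theorem \ref{thm:block-boot}, and convert remainder bounds into quantile bounds via anti-concentration) matches the paper. But there is a genuine gap in your middle step, and it is exactly the step you flag as the ``main obstacle.'' You expand $\widehat\Theta_{q_0}^*=\mathcal{T}(F_{d_0}^{N_0,*})$ around $F_{d_0}^{N_0}$, which produces a linear term built from the \emph{estimated} influence function $\widehat{IF}(v_i^*,F_{d_0}^{N_0})$, and you then propose to bridge $T_{\widehat X^*}$ to $T_{X^*}$ by a Gaussian comparison controlled by $\mathcal{E}_{AB}$. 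This bridge is not supported: Assumption \ref{assum:app-2} bounds $\mathcal{R}_{N_0}^*-\mathcal{R}_{N_0}$ where $\mathcal{R}_{N_0}^*=\mathcal{R}_{N_0}(v_1^*,\dots,v_{N_0}^*)$ is the remainder of the expansion around the \emph{true} $F_{d_0}$ evaluated at the bootstrap sample, not the remainder of an expansion around $F_{d_0}^{N_0}$; moreover $T_{\widehat X^*}$ and $T_{X^*}$ are multinomial-resampling maxima, not conditionally Gaussian, so Lemma 3.2 of Chernozhukov et al.\ does not apply to them directly, and $\mathcal{E}_{AB}$ as defined in Assumption \ref{assum:app-1} measures the covariance mismatch of the \emph{multiplier}-bootstrap blocks $\widehat A_{ij},\widehat B_{ij}$ versus $A_{ij},B_{ij}$ used in Theorem \ref{thm:app-1}, not the discrepancy between two non-overlapping block bootstrap laws.

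The resolution is an algebraic identity you missed, and it is the entire point of using the non-overlapping block bootstrap here: apply the expansion (\ref{eq:lin}) with the \emph{true} $F_{d_0}$ to both $\{v_i\}$ and $\{v_i^*\}$ and subtract, giving
\begin{equation*}
\sqrt{N_0}(\widehat{\Theta}_{q_0}^*-\widehat{\Theta}_{q_0})
=\frac{1}{\sqrt{N_0}}\sum^{N_0}_{i=1}\Bigl\{IF(v_i^*,F_{d_0})-\tfrac{1}{N_0}\sum^{N_0}_{l=1}IF(v_l,F_{d_0})\Bigr\}
+\sqrt{N_0}(\mathcal{R}_{N_0}^*-\mathcal{R}_{N_0}).
\end{equation*}
Since $IF(\cdot,F_{d_0})$ is a fixed function and block-resampling the $v_i$'s is the same as block-resampling the $x_i$'s, the linear term is \emph{exactly} $\sum_i(x_{ij}^*-\bar x_j)/\sqrt{N_0}$ --- the non-overlapping block bootstrap of the unobservable true influence-function process. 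No estimated influence function appears, and no comparison between an ``estimated'' and a ``true'' bootstrap law is needed. The remainder $J\le\max_j\sqrt{N_0}|\mathcal{R}_{jN_0}^*-\mathcal{R}_{jN_0}|$ is precisely what Assumption \ref{assum:app-2} controls (conditionally, with outer probability $1-C_4n^{-c_4}$); combined with Lemma 3.3 of Chernozhukov et al.\ this yields the two-sided quantile comparison between $c_2(\alpha)$ and the quantile $\widetilde c_2(\alpha)$ of the true-$IF$ block bootstrap, after which Theorem \ref{thm:block-boot} (with $p$ replaced by $2q_0$) and anti-concentration finish the proof as you describe. Your first step (handling $R_1$ via Assumption \ref{assum:app-1}) and your final anti-concentration step are correct.
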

%\noindent Note that ``$o(1)$" in (\ref{thm5.3}) above is understood
%conditional on $\{x_i\}_{i=1}^{n}$. Similar argument is used through
%the paper for simplicity; see Lemma 3 of \cite{ch10} for a more rigorous statement.

\begin{remark}\label{rk:student}
An alternative way to construct the uniform confidence band or
perform hypothesis testing is based on the studentized statistic.
For example, let $\widehat{\sigma}_j^2$ be a consistent estimator of
$\lim_{n\rightarrow \infty} N_0\text{var}(\widehat{\theta}_{j})$.
Then the uniform confidence band can be constructed as
$$\left\{\Theta_{q_0}=(\theta_1,\dots,\theta_{q_0})'\in\mathbb{R}^{q_0}:\max_{1\leq
j\leq
q_0}\sqrt{N_0}\left|\widehat{\theta}_{j}-\theta_j\right|/\widehat{\sigma}_j\leq
\check{c}(\alpha)\right\}.$$ The blockwise multiplier bootstrap or
non-overlapping block bootstrap can be modified accordingly to obtain the critical
value $\check{c}(\alpha)$.
\end{remark}

\subsection{Extension to infinite dimensional parameters}\label{subsec:infinite}
To broaden the applicability of our method, we extend the above
results to cover infinite dimensional parameters that are
functionals of the joint distribution of $\{u_i\}_{i\in\mathbb{Z}}$,
denoted as $F_{\infty}$. A typical example is the spectral
quantities that depend on the distribution of the whole time series
rather than any finite dimensional distribution; see
Example~\ref{eg:spe}. Hence, the extension in this section is useful
in conducting inference for the spectrum of high dimensional time
series.

Suppose
$\Theta_{q_0}=(\theta_1,\dots,\theta_{q_0})'=\mathcal{T}_{\infty}(F_{\infty})$
and its estimator is
$\widehat{\Theta}_{q_0}:=\widehat{\Theta}_{q_0}(u_1,\dots,u_n)=
(\widehat{\theta}_1,\dots,\widehat{\theta}_{q_0})'$. Again, $q_0$ is
allowed to grow with $n$ or $p$. Assume that there exists a sequence of
approximating statistics for $\widehat{\Theta}_{q_0}$ that is a
functional of $\vartheta_n$-dimensional empirical distribution, and
a sequence of approximating (non-random) quantities
$\bar{\Theta}_{q_0}=(\bar{\theta}_1,\dots,\bar{\theta}_{q_0})'$ for
$\Theta_{q_0}$. Then our bootstrap method as proposed in
Section~\ref{subsec:als} still works provided that these two
approximation errors can be well controlled and similar regularity
conditions hold for the expansion of the approximating statistics
around $\bar{\Theta}_{q_0}$, i.e., (\ref{app:exp}). To be more precise, we impose the following assumption.
\begin{assumption}\label{assum:app-3}
For a sequence of positive integers $\vartheta_n$ that grow with
$n,$ let $v_{i,\vartheta_n}=(u_i,\dots,u_{i+\vartheta_n-1})$ with
$i=1,2,\dots,N_{0,\vartheta_n}:=n-\vartheta_n+1.$ Assume the
expansion,
\begin{equation}\label{app:exp}
\begin{split}
\mathcal{T}_{\vartheta_n}(F_{\vartheta_n}^{N_{0,\vartheta_n}})
:=&(\mathcal{T}_{1,\vartheta_n}(F_{\vartheta_n}^{N_{0,\vartheta_n}}),\dots,\mathcal{T}_{q_0,\vartheta_n}(F_{\vartheta_n}^{N_{0,\vartheta_n}}))'
\\=&\bar{\Theta}_{q_0}+\frac{1}{n}\sum^{N_{0,\vartheta_n}}_{i=1}IF(v_{i,\vartheta_n},F_{\vartheta_n})+\mathcal{R}_{N_{0,\vartheta_n}},
\end{split}
\end{equation}
where
$\mathcal{R}_{N_{0,\vartheta_n}}=(\mathcal{R}_{1,N_{0,\vartheta_n}},\dots,\mathcal{R}_{q_0,N_{0,\vartheta_n}})'$
is a remainder term. Denote
$\Upsilon_{j,\vartheta_n}=|\mathcal{R}_{j,N_{0,\vartheta_n}}|+|\widehat{\theta}_{j}-\mathcal{T}_{j,\vartheta_n}(F_{\vartheta_n}^{N_{0,\vartheta_n}})|$.
Suppose that
$$P\left(\max_{1\leq j\leq q_0}\sqrt{N_0}\Upsilon_{j,\vartheta_n}>C_1n^{-c_1}/\sqrt{\log(2q_0)}\right)<C_1n^{-c_1},$$
and $n^{c_1}\sqrt{\log(2q_0)}\max_{1\leq j\leq
q_0}\sqrt{N_0}|\bar{\theta}_{j}-\theta_j|=o(1)$ for some  $c_1,C_1>0$.
\end{assumption}

We next illustrate the validity of expansion (\ref{app:exp}) using a spectral
mean example.
\begin{example}\label{eg:spe}
Consider the spectral mean
$G(F_u,\phi)=\int^{\pi}_{-\pi}\text{tr}(\phi(\lambda)F_{u}(\lambda))d\lambda$,
where $\text{tr}$ denotes the trace of a square matrix, $F_u(\cdot)$
is the spectral density of $\{u_i\}$ and
$\phi(\cdot):[-\pi,\pi]\rightarrow \mathbb{R}^{p\times p}.$ For
simplicity, assume that $\E u_i=0.$ Suppose the quantity of interest
is $\Theta_0=(G(F_u,\phi_1),\dots,G(F_u,\phi_{q_0}))'$ with
$\phi_k(\cdot):[-\pi,\pi]\rightarrow \mathbb{R}^{p\times p}$ for
$1\leq k\leq q_0.$ Here $\Theta_0$ can be interpreted as the
projection of the spectral density matrix onto $q_0$ directions
defined by $\phi_k(\cdot)$ with $1\leq k\leq q_0.$ A sample analogue
of $F_u(\lambda)$ is the periodogram
$\mathcal{I}_{n,u}(\lambda)=(2\pi
n)^{-1}\sum^{n}_{i,j=1}u_iu_j'\exp(\I(i-j)\lambda)$ with
$\I=\sqrt{-1}$. Then a plug-in estimator for $\Theta_{q_0}$ is given
by
$\widehat{\Theta}_{q_0}=(G(\mathcal{I}_{n,u},\phi_1),\dots,G(\mathcal{I}_{n,u},\phi_{q_0}))'$.
Letting $\widehat{\Gamma}_{n,h}=\sum^{n-h}_{j=1}u_{j+h}u_{j}'/n$,
then
$G(\mathcal{I}_{n,u},\phi_{k})=\sum^{n-1}_{h=1-n}\text{tr}(\widetilde{\phi}_{hk}\widehat{\Gamma}_{n,h})$
with $\widetilde{\phi}_{hk}=\int^{\pi}_{-\pi}\phi_k(\lambda)\exp(\I
h\lambda)d\lambda/(2\pi).$ Consider the approximating quantity
$\bar{\theta}_{j}=\sum^{\vartheta_n-1}_{h=1-\vartheta_n}\text{tr}(\widetilde{\phi}_{hk}\Gamma_h)$
with $\Gamma_h=\E u_{j+h}u_j'$. It is then straightforward to see
that
\begin{equation}
\mathcal{T}_{\vartheta_n}(F_{\vartheta_n}^{N_{0,\vartheta_n}}):=\sum^{\vartheta_n-1}_{h=1-\vartheta_n}\text{tr}(\widetilde{\phi}_{hk}\widehat{\Gamma}_h)=\bar{\theta}_j
+\frac{1}{n}\sum^{N_{0,\vartheta_n}}_{i=1}IF(v_{i,\vartheta_n},F_{\vartheta_n})+\mathcal{R}_{j,N_{0,\vartheta_n}},
\end{equation}
where
$IF(v_{i,\vartheta_n},F_{\vartheta_n})=\sum^{\vartheta_n-1}_{h=1-\vartheta_n}\text{tr}\{\widetilde{\phi}_{hk}(u_{i+h}u_i'-\Gamma_h)\}$
and $\mathcal{R}_{j,N_{0,\vartheta_n}}$ is the corresponding
remainder term.
\end{example}

Recall that $\Theta_{q_0}=\mathcal{T}_{\infty}(F_{\infty})$ with
$F_{\infty}$ being the joint distribution of
$\{u_i\}_{i\in\mathbb{Z}}$. The statistic for testing the null
hypothesis $H_0: \Theta_{q_0}=\widetilde{\Theta}_{q_0}$ versus the
alternative $H_a: \Theta_{q_0}\neq \widetilde{\Theta}_{q_0}$, where
$\widetilde{\Theta}_{q_0}=(\widetilde{\theta}_{1},\dots,\widetilde{\theta}_{q_0})'$,
is given by
\begin{equation}
\max_{1\leq j\leq
q_0}\sqrt{N_0}|\widehat{\theta}_{j}-\widetilde{\theta}_j|\geq
c(\alpha).
\end{equation}
With some abuse of notation, we now define
$x_i:=x_{in}=(IF(v_{i,\vartheta_n},F_{\vartheta_n})',-IF(v_{i,\vartheta_n},F_{\vartheta_n})')'$
and
$\widehat{x}_i=(\widehat{IF}(v_{i,\vartheta_n},F_{\vartheta_n}^{N_{0,\vartheta_n}})',-\widehat{IF}(v_{i,\vartheta_n},F_{\vartheta_n}^{N_{0,\vartheta_n}})')'$
with
$\widehat{IF}(v_{i,\vartheta_n},F_{\vartheta_n}^{N_{0,\vartheta_n}})$
being some estimate of $IF(v_{i,\vartheta_n},F_{\vartheta_n})$ (note
that in this case $\{x_{in}\}_{i=1}^{N_{0,\vartheta_n}}$ is an
array). Suppose
$N_{0,\vartheta_n}=(N_{1,\vartheta_n}+M_{1,\vartheta_n})r_{1,\vartheta_n}$.
We can define $\widehat{A}_{ij}$ and $\widehat{B}_{ij}$ in a similar
way as before (see (\ref{eq:app-AB})), where $1\leq i\leq
r_{1,\vartheta_n}$ and $1\leq j\leq 2q_0$. Let
\begin{align*}
T_{\widehat{D}}=\max_{1\leq j\leq
2q_0}\frac{1}{\sqrt{n}}\sum^{r_{1,\vartheta_n}}_{i=1}\widehat{D}_{ij},
\end{align*}
where
$\widehat{D}_{ij}=\widehat{A}_{ij}e_i+\widehat{B}_{ij}\widetilde{e}_i$
with $\{(e_{i},\widetilde{e}_i)\}$ being a sequence of i.i.d
$N(0,I_2)$ independent of $\{u_i\}$. The
bootstrap critical value is then given by
\begin{equation}
c_{1}(\alpha):=\inf\{t\in\mathbb{R}:P(T_{\widehat{D}}\leq
t|\{x_i\}^{n}_{i=1})\geq 1-\alpha\}.
\end{equation}
Following the arguments in the proof of Theorem \ref{thm:app-1}, we
obtain the following result.

\begin{theorem}\label{thm:app-3}
Suppose Assumption \ref{assum:app-3} holds and the assumptions in
Theorem \ref{thm:m-dep-boot} or Theorem \ref{thm:dep-boot} are
satisfied for $\{x_i\}$, where $p$ is replaced by $2q_0$. Assume in
addition that $P(\mathcal{E}_{AB}\{\log(2q_0)\}^2>C_2n^{-c_2})\leq
C_2n^{-c_2}$, where $c_2,C_2>0$, and
\begin{align*}
\mathcal{E}_{AB}=\max_{1\leq j\leq
2q_0}\left|\frac{1}{n}\sum^{r}_{i=1}\{(A_{ij}-\widehat{A}_{ij})^2+(B_{ij}-\widehat{B}_{ij})^2\}\right|,
\end{align*}
with
$A_{ij}=\sum^{iN_{1,\vartheta_n}+(i-1)M_{1,\vartheta_n}}_{l=iN_{1,\vartheta_n}+(i-1)M_{1,\vartheta_n}-N_{1,\vartheta_n}+1}x_{lj}$
and
$B_{ij}=\sum^{i(N_{1,\vartheta_n}+M_{1,\vartheta_n})}_{l=i(N_{1,\vartheta_n}+M_{1,\vartheta_n})-M_{1,\vartheta_n}+1}x_{lj}$.
Then we have for some $c>0,$
\begin{equation}
\sup_{\alpha\in (0,1)}\left|P\left(\max_{1\leq j\leq
q_0}\sqrt{N_0}|\widehat{\theta}_{j}-\widetilde{\theta}_j|\geq
c_{1}(\alpha)\right)-\alpha\right|\lesssim n^{-c}.
\end{equation}
\end{theorem}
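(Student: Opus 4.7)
The plan is to reduce Theorem~\ref{thm:app-3} to the bootstrap validity established in Theorems~\ref{thm:m-dep-boot} and~\ref{thm:dep-boot} applied to the $2q_0$-dimensional array $\{x_{in}\}_{i=1}^{N_{0,\vartheta_n}}$ built from the true influence functions. Under $H_0$, I decompose
\begin{equation*}
\sqrt{N_0}(\widehat{\theta}_{j}-\widetilde{\theta}_j)
= \frac{\sqrt{N_0}}{n}\sum_{i=1}^{N_{0,\vartheta_n}} IF_j(v_{i,\vartheta_n},F_{\vartheta_n})
+ \sqrt{N_0}\,\mathcal{R}_{j,N_{0,\vartheta_n}}
+ \sqrt{N_0}\bigl[\widehat{\theta}_{j}-\mathcal{T}_{j,\vartheta_n}(F_{\vartheta_n}^{N_{0,\vartheta_n}})\bigr]
+ \sqrt{N_0}(\bar{\theta}_j-\theta_j).
\end{equation*}
The last three error terms are uniformly $o_P(n^{-c_1}/\sqrt{\log(2q_0)})$ with probability $1-O(n^{-c_1})$ by Assumption~\ref{assum:app-3} (the first two combined through $\Upsilon_{j,\vartheta_n}$, and the approximation bias by the second displayed condition). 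Since $N_0/n\to 1$, the leading term coincides with $\max_{1\leq j\leq 2q_0} n^{-1/2}\sum_i x_{ij}$ up to a negligible factor, by passing to the vector $(IF,-IF)'$ so that the two-sided max absolute value becomes a one-sided max.

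Next, I apply Theorem~\ref{thm:m-dep-boot} or Theorem~\ref{thm:dep-boot} to $\{x_{in}\}$ with dimension $2q_0$. This gives
\begin{equation*}
\sup_{\alpha\in(0,1)}\bigl|P(T_X\leq c_{T_D}(\alpha))-\alpha\bigr|\lesssim n^{-c},
\end{equation*}
where $T_D$ is the oracle blockwise multiplier bootstrap statistic using the unknown influence functions. Combined with the anti-concentration inequality for maxima of Gaussian vectors (Nazarov's inequality, as used in Chernozhukov et al.\ 2013), the $o_P$ error terms in the decomposition above translate into an $O(n^{-c})$ gap between the distribution of $\max_j\sqrt{N_0}|\widehat{\theta}_j-\widetilde{\theta}_j|$ and that of $T_X$, hence also between the former and $c_{T_D}(\alpha)$.

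The remaining and most delicate step is to replace the oracle bootstrap quantile $c_{T_D}(\alpha)$ by the feasible bootstrap quantile $c_1(\alpha)$ based on $\widehat{x}_i$. Conditionally on the data, both $T_D$ and $T_{\widehat{D}}$ are centered Gaussian maxima; their conditional covariance matrices differ by entries controlled by $\mathcal{E}_{AB}$, since
\begin{equation*}
\bigl|\text{cov}(\widehat{D}_{ij},\widehat{D}_{ik}) - \text{cov}(D_{ij},D_{ik})\bigr|
\end{equation*}
is bounded, after Cauchy--Schwarz, by quantities majorized by $\mathcal{E}_{AB}$ plus $E_A+E_B$ from Lemma~\ref{lemma:m-dep-boot}. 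Invoking the Gaussian comparison lemma (Lemma 3.1 of \cite{cck13}) conditionally on the data, together with the hypothesized bound $P(\mathcal{E}_{AB}\{\log(2q_0)\}^2>C_2n^{-c_2})\leq C_2n^{-c_2}$, yields $\sup_t|P(T_{\widehat{D}}\leq t\mid\{x_i\})-P(T_D\leq t\mid\{x_i\})|\lesssim n^{-c'}$ with probability at least $1-C n^{-c}$. Translating this into quantile comparison via the Gaussian anti-concentration bound (as in the proof of Theorem~\ref{thm:m-dep-boot}) gives $|c_1(\alpha)-c_{T_D}(\alpha)|$ small on a high-probability set, completing the argument. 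The main obstacle is this last step: verifying that the estimation error of the influence functions propagates harmlessly into the conditional covariances of the multiplier sums, which is why Assumption~\ref{assum:app-3} is formulated to control precisely the quantity $\mathcal{E}_{AB}$ that arises here.
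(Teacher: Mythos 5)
Your overall architecture coincides with the paper's: Theorem \ref{thm:app-3} is proved by following the proof of Theorem \ref{thm:app-1}, namely (i) reduce $\max_j\sqrt{N_0}|\widehat{\theta}_j-\widetilde{\theta}_j|$ to $\max_{1\le j\le 2q_0}\sum_i x_{ij}/\sqrt{N_0}$ up to the errors $\Upsilon_{j,\vartheta_n}$ and the bias $|\bar{\theta}_j-\theta_j|$ controlled by Assumption \ref{assum:app-3}; (ii) invoke Theorem \ref{thm:m-dep-boot} or Theorem \ref{thm:dep-boot} in dimension $2q_0$ for the oracle multiplier bootstrap; (iii) pass from $c_{T_D}(\alpha)$ to the feasible $c_1(\alpha)$ using $\mathcal{E}_{AB}$; (iv) convert all perturbations into a uniform quantile error via Lemma 3.3 and the arguments of Theorem 3.2 in \cite{cck13}. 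Your steps (i), (ii) and (iv) match the paper.

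The deviation, and the gap, is in step (iii). The paper does not compare the conditional laws of $T_D$ and $T_{\widehat{D}}$ through their covariance matrices; it couples them through the \emph{same} multipliers $(e_i,\widetilde{e}_i)$ and observes that, conditionally on the data, each $n^{-1/2}\sum_i(D_{ij}-\widehat{D}_{ij})$ is centered Gaussian with variance at most $\mathcal{E}_{AB}$, whence $\E\bigl[|T_D-T_{\widehat{D}}|\mid\{x_i\}\bigr]\le C'\sqrt{\mathcal{E}_{AB}\log(2q_0)}$; Markov's inequality plus Lemma 3.3 of \cite{cck13} then finish. This yields a square-root dependence on $\mathcal{E}_{AB}$, and the hypothesis $\mathcal{E}_{AB}\{\log(2q_0)\}^2\le C_2n^{-c_2}$ is calibrated exactly so that $\sqrt{\mathcal{E}_{AB}\log(2q_0)}\cdot\sqrt{\log(2q_0)}\lesssim n^{-c_2/2}$. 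Your route through the Gaussian comparison lemma pays a cube root: by Cauchy--Schwarz the conditional covariance discrepancy is of order $\sqrt{\mathcal{E}_{AB}}$ (not $\mathcal{E}_{AB}$), so the comparison bound is of order $\mathcal{E}_{AB}^{1/6}\{1\vee\log(2q_0/\cdot)\}^{2/3}\lesssim n^{-c_2/6}\{\log(2q_0)\}^{1/3}$ up to logarithmic corrections. In the regime $\log(2q_0)\asymp n^{b}$ that the theorem is designed for, this is $n^{-c_2/6+b/3}$ and does not tend to zero unless $c_2>2b$, which the stated assumption does not guarantee; the comparison lemma also needs a lower bound on the conditional variances that you would have to extract from the $E_A,E_B$ control. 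Replacing your comparison-lemma step by the coupling bound above repairs the argument and is precisely what the paper does.
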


\newpage
\vskip 1em \centerline{\Large \bf Supplementary Material} \vskip 1em
\setcounter{subsection}{0}
\renewcommand{\thesubsection}{S.\arabic{subsection}}
\setcounter{equation}{0}
\renewcommand{\theequation}{S.\arabic{equation}}
\setcounter{lemma}{0}
\renewcommand{\thelemma}{S.\arabic{lemma}}
\setcounter{theorem}{0}
\renewcommand{\thetheorem}{S.\arabic{theorem}}
\setcounter{assumption}{0}
\renewcommand{\theassumption}{S.\arabic{assumption}}
\setcounter{proposition}{0}
\renewcommand{\theproposition}{S.\arabic{proposition}}
\setcounter{example}{0}
\renewcommand{\theexample}{S.\arabic{example}}
\setcounter{remark}{0}
\renewcommand{\theremark}{S.\arabic{remark}}

Throughout the supplementary material, define the generic constants
$C$ and $C'$ that are independent of $n$ and $p$. For a set
$\mathcal{A}$, denote by $|\mathcal{A}|$ its cardinality.

\subsection{Proofs of the main results in Section \ref{sec:maxima}}
\begin{proof}[Proof of Proposition \ref{prop1}]
Define $Z(t)=\sum^{n}_{i=1}Z_i(t)$ with the Slepian interpolation
$Z_i(t)=(\sqrt{t}\widetilde{x}_i+\sqrt{1-t}\widetilde{y}_i)/\sqrt{n}$
and $0\leq t\leq 1.$ Let $\Psi(t)=\E m(Z(t)).$ Define
$V^{(i)}(t)=\sum_{j\in \widetilde{N}_i}Z_j(t)$ and
$Z^{(i)}(t)=Z(t)-V^{(i)}(t)$. Write $\partial_{j}m(x)=\partial
m(x)/\partial x_j$, $\partial_{jk}m(x)=\partial^2 m(x)/\partial
x_j\partial x_k$ and $\partial_{jkl}m(x)=\partial^3m(x)/\partial
x_j\partial x_k\partial x_l$ for $j,k,l=1,2,\dots,p$, where
$x=(x_1,x_2,\dots,x_p)'$. Note that
\begin{equation}
\begin{split}
\E m(\widetilde{X})-\E
m(\widetilde{Y})=&\Psi(1)-\Psi(0)=\int^{1}_{0}\Psi'(t)dt
=\frac{1}{2}\sum^{n}_{i=1}\sum^{p}_{j=1}\int^{1}_{0}\E [\partial_j
m(Z(t))\dot{Z}_{ij}(t)]dt
\\=&\frac{1}{2}(I_1+I_2+I_3),
\end{split}
\end{equation}
where
$\dot{Z}_{ij}(t)=\{\widetilde{x}_{ij}/\sqrt{t}-\widetilde{y}_{ij}/\sqrt{1-t}\}/\sqrt{n},$
and
\begin{equation}
\begin{split}
&I_1=\sum^{n}_{i=1}\sum^{p}_{j=1}\int^{1}_{0}\E[\partial_j
m(Z^{(i)}(t))\dot{Z}_{ij}(t)]dt,
\\&I_2=\sum^{n}_{i=1}\sum^{p}_{k,j=1}\int^{1}_{0}\E[\partial_k\partial_j m(Z^{(i)}(t))\dot{Z}_{ij}(t)V_{k}^{(i)}(t)]dt, \\
&I_3=\sum^{n}_{i=1}\sum^{p}_{k,l,j=1}\int^{1}_{0}\int^{1}_{0}(1-\tau)\E[\partial_l\partial_k\partial_jm(Z^{(i)}(t)+\tau
V^{(i)}(t))\dot{Z}_{ij}(t)V_{k}^{(i)}(t)V_{l}^{(i)}(t)]dtd\tau.
\end{split}
\end{equation}

Using the fact that $Z^{(i)}(t)$ and $\dot{Z}_{ij}(t)$ are
independent, and $\E \dot{Z}_{ij}(t)=0$, we have $I_1=0.$ To bound
the second term, define the expanded neighborhood around $N_i$,
$$\mathcal{N}_i=\{j: \{j,k\}\in E_n\text{~for some~} k\in N_i\},$$
and
$\mathcal{Z}^{(i)}(t)=Z(t)-\sum_{l\in\mathcal{N}_i\cup \widetilde{N}_i}Z_{l}(t)=Z^{(i)}(t)-\mathcal{V}^{(i)}(t)$,
where $\mathcal{V}^{(i)}(t)=\sum_{l\in \mathcal{N}_i\setminus
\widetilde{N}_i}Z_l(t)$ with $\mathcal{N}_i\setminus
\widetilde{N}_i=\{k\in\mathcal{N}_i: k\notin \widetilde{N}_i\}$. By
Taylor expansion, we have
\begin{align*}
I_2=&\sum^{n}_{i=1}\sum^{p}_{k,j=1}\int^{1}_{0}\E[\partial_k\partial_j
m(\mathcal{Z}^{(i)}(t))\dot{Z}_{ij}(t)V_{k}^{(i)}(t)]dt
\\&+\sum^{n}_{i=1}\sum^{p}_{k,j,l=1}\int^{1}_{0}\int^{1}_{0}\E[\partial_k\partial_j\partial_l m(\mathcal{Z}^{(i)}(t)+\tau \mathcal{V}^{(i)}(t))\dot{Z}_{ij}(t)V_{k}^{(i)}(t)\mathcal{V}^{(i)}_l(t)]dtd\tau
\\=&\sum^{n}_{i=1}\sum^{p}_{k,j=1}\int^{1}_{0}\E[\partial_k\partial_j m(\mathcal{Z}^{(i)}(t))]\E[\dot{Z}_{ij}(t)V_{k}^{(i)}(t)]dt
\\&+\sum^{n}_{i=1}\sum^{p}_{k,j,l=1}\int^{1}_{0}\int^{1}_{0}\E[\partial_k\partial_j\partial_l m(\mathcal{Z}^{(i)}(t)+\tau \mathcal{V}^{(i)}(t))\dot{Z}_{ij}(t)V_{k}^{(i)}(t)\mathcal{V}^{(i)}_l(t)]dtd\tau
\\=& I_{21}+I_{22},
\end{align*}
where we have used the fact that $\dot{Z}_{ij}(t)V_{k}^{(i)}(t)$ and
$\mathcal{Z}^{(i)}(t)$ are independent. 

Let
$M_{xy}=\max\{M_x,M_y\}$. By the assumption that $2\sqrt{5}\beta
D_n^2M_{xy}/\sqrt{n}\leq 1,$
\begin{align*}
\max_{1\leq j\leq p}\left|\sum_{l\in\mathcal{N}_i\cup
\widetilde{N}_i}Z_{lj}(t)\right|\leq & \max_{1\leq j\leq
p}\sum_{l\in\mathcal{N}_i\cup \widetilde{N}_i}|Z_{lj}(t)| \leq
D_n^2\sup_{t\in [0,1]}(2\sqrt{t}+\sqrt{1-t})M_{xy}/\sqrt{n}
\\ \leq& \sqrt{5}D_n^2M_{xy}/\sqrt{n} \leq \beta^{-1}/2 \leq \beta^{-1},
\end{align*}
where the second inequality comes from the facts that
$|\widetilde{x}_{ij}|\leq 2M_{xy}$, $|\widetilde{y}_{ij}|\leq
M_{xy}$ and $|\mathcal{N}_i\cup \widetilde{N}_i|\leq D_n^2$. By
Lemma A.5 in \cite{cck13}, we have for every $1\leq j,k,l\leq p,$
\begin{align*}
|\partial_j\partial_km(z)|\leq U_{jk}(z),\quad
|\partial_j\partial_k\partial_l m(z)|\leq U_{jkl}(z),
\end{align*}
where $U_{jk}(z)$ and $U_{jkl}(z)$ satisfy that
\begin{align*}
\sum^{p}_{j,k=1}U_{jk}(z)\leq (G_2+2G_1\beta),\quad
\sum^{p}_{j,k,l=1}U_{jkl}(z)\leq (G_3+6G_2\beta+6G_1\beta^2),
\end{align*}
with $G_k=\sup_{z\in\mathbb{R}}|\partial^k g(z)/\partial z^k|$ for
$k\geq 0$. Along with Lemma A.6 in \cite{cck13}, we obtain
\begin{align*}
|I_{21}|\leq&
\sum^{n}_{i=1}\sum^{p}_{k,j=1}\int^{1}_{0}\E[U_{jk}(\mathcal{Z}^{(i)}(t))]|\E[\dot{Z}_{ij}(t)V_{k}^{(i)}(t)]|dt
\\ \lesssim & \sum^{n}_{i=1}\sum^{p}_{k,j=1}\int^{1}_{0}\E[U_{jk}(Z(t))]|\E[\dot{Z}_{ij}(t)V_{k}^{(i)}(t)]|dt
\\ \lesssim &  (G_2+G_1\beta)\int^{1}_{0}\max_{1\leq j,k\leq
p}\sum^{n}_{i=1}|\E[\dot{Z}_{ij}(t)V_{k}^{(i)}(t)]|dt.
\end{align*}
Since $2\sqrt{5}\beta D_n^2M_{xy}/\sqrt{n} \leq 1$, we have
\begin{align}\label{eq:I22}
|I_{22}|\leq&
\sum^{n}_{i=1}\sum^{p}_{k,j,l=1}\int^{1}_{0}\int^{1}_{0}\E[|\partial_k\partial_j\partial_l
m(\mathcal{Z}^{(i)}(t)+\tau
\mathcal{V}^{(i)}(t))|\cdot|\dot{Z}_{ij}(t)V_{k}^{(i)}(t)\mathcal{V}^{(i)}_l(t)|]dtd\tau
\notag
\\ \leq& \sum^{n}_{i=1}\sum^{p}_{k,j,l=1}\int^{1}_{0}\int^{1}_{0}\E[U_{kjl}(\mathcal{Z}^{(i)}(t)+\tau \mathcal{V}^{(i)}(t))|\dot{Z}_{ij}(t)V_{k}^{(i)}(t)\mathcal{V}^{(i)}_l(t)|]dtd\tau \notag
\\ \lesssim & \sum^{n}_{i=1}\sum^{p}_{k,j,l=1}\int^{1}_{0}\E[U_{kjl}(Z(t))|\dot{Z}_{ij}(t)V_{k}^{(i)}(t)\mathcal{V}^{(i)}_l(t)|]dtd\tau \notag
\\ \leq & \int^{1}_{0}\E\left[\sum^{p}_{k,j,l=1}U_{kjl}(Z(t))\max_{1\leq k,j,l\leq p}\sum^{n}_{i=1}|\dot{Z}_{ij}(t)V_{k}^{(i)}(t)\mathcal{V}^{(i)}_l(t)|\right]dtd\tau \notag
\\ \lesssim & (G_3+G_2\beta+G_1\beta^2)\int^{1}_{0}\E\max_{1\leq k,j,l\leq p}\sum^{n}_{i=1}|\dot{Z}_{ij}(t)V_{k}^{(i)}(t)\mathcal{V}^{(i)}_l(t)|dtd\tau.
\end{align}

To bound the integration on (\ref{eq:I22}), we let
$w(t)=1/(\sqrt{t}\wedge \sqrt{1-t})$ and note that
\begin{align*}
&\int^{1}_{0}\E\max_{1\leq k,j,l\leq
p}\sum^{n}_{i=1}|\dot{Z}_{ij}(t)V_{k}^{(i)}(t)\mathcal{V}^{(i)}_l(t)|dt
\\ \leq& \int^{1}_{0}\E\max_{1\leq k,j,l\leq p}\left(\sum^{n}_{i=1}|\dot{Z}_{ij}(t)|^3\right)^{1/3}\left(\sum^{n}_{i=1}|V_{k}^{(i)}(t)|^3\right)^{1/3}\left(\sum^{n}_{i=1}|\mathcal{V}^{(i)}_l(t)|^3\right)^{1/3}dt
\\ \leq& \int^{1}_{0}w(t)\left(\E\max_{1\leq j\leq p}\sum^{n}_{i=1}|\dot{Z}_{ij}(t)/w(t)|^3\E\max_{1\leq k\leq p}\sum^{n}_{i=1}|V_{k}^{(i)}(t)|^3\E\max_{1\leq l\leq p}\sum^{n}_{i=1}|\mathcal{V}^{(i)}_l(t)|^3\right)^{1/3}dt.
\end{align*}
As for $I_{21}$, by the assumption that $\E y_{ij}y_{lk}=\E
x_{ij}x_{lk}$ (in fact, we only need to require that
$\sum_{k\in\widetilde{N}_i}\E
x_{i}x_{k}'=\sum_{k\in\widetilde{N}_i}\E y_{i}y_{k}'$ for all $i$),
we have
\begin{equation}\label{eq:cov-ts}
\begin{split}
& \max_{1\leq j,k\leq
p}\sum^{n}_{i=1}|\E[\dot{Z}_{ij}(t)V_{k}^{(i)}(t)]|=\max_{1\leq
j,k\leq p}\frac{1}{n}\sum^{n}_{i=1}\left|\sum_{l\in
\widetilde{N}_i}(\E
\widetilde{x}_{ij}\widetilde{x}_{lk}-\E\widetilde{y}_{ij}\widetilde{y}_{lk})\right|
\\=&\max_{1\leq j,k\leq p}\frac{1}{n}\sum^{n}_{i=1}\left|\sum_{l\in \widetilde{N}_i}(\E \widetilde{x}_{ij}\widetilde{x}_{lk}-\E x_{ij}x_{lk})+\sum_{l\in \widetilde{N}_i}(\E y_{ij}y_{lk}-\E \widetilde{y}_{ij}\widetilde{y}_{lk})\right|
\\ \leq &\max_{1\leq j,k\leq p}\frac{1}{n}\sum^{n}_{i=1}\left|\sum_{l\in \widetilde{N}_i}\left\{\E y_{lk}(y_{ij}-\widetilde{y}_{ij})+\E \widetilde{y}_{ij}(y_{lk}-\widetilde{y}_{lk})\right\}\right|
\\&+\max_{1\leq j,k\leq p}\frac{1}{n}\sum^{n}_{i=1}\left|\sum_{l\in \widetilde{N}_i}\left\{\E x_{lk}(x_{ij}-\widetilde{x}_{ij})+\E \widetilde{x}_{ij}(x_{lk}-\widetilde{x}_{lk})\right\}\right|
\\ \leq & \phi(M_{x},M_y).
\end{split}
\end{equation}

Using similar arguments as above, we have
$|I_3| \lesssim (G_3+G_2\beta+G_1\beta^2)I_{31}$
with
$$I_{31}\leq \int^{1}_{0}w(t)\left(\E\max_{1\leq j\leq p}\sum^{n}_{i=1}|\dot{Z}_{ij}(t)/w(t)|^3\E\max_{1\leq k\leq p}\sum^{n}_{i=1}|V_{k}^{(i)}(t)|^3\E\max_{1\leq l\leq p}\sum^{n}_{i=1}|V^{(i)}_l(t)|^3\right)^{1/3}dt.$$
We first consider the term $\E\max_{1\leq j\leq
p}\sum^{n}_{i=1}|\dot{Z}_{ij}(t)/w(t)|^3$. Using the fact that
$|\dot{Z}_{ij}(t)/w(t)|\leq
(|\widetilde{x}_{ij}|+|\widetilde{y}_{ij}|)/\sqrt{n},$ we get
$$\E\max_{1\leq j\leq p}\sum^{n}_{i=1}|\dot{Z}_{ij}(t)/w(t)|^3\lesssim \frac{1}{n^{3/2}}\E \max_{1\leq j\leq p}\sum^{n}_{i=1}(|\widetilde{x}_{ij}|^3+|\widetilde{y}_{ij}|^3)
\lesssim \frac{1}{\sqrt{n}}(m_{x,3}^3+m_{y,3}^3).$$ On the other
hand, notice that
\begin{align*}
\E\max_{1\leq k\leq p}\sum^{n}_{i=1}|V_{k}^{(i)}(t)|^3  \leq &
D_n^{2}\E\max_{1\leq k\leq p}\sum^{n}_{i=1}\sum_{j\in
\widetilde{N}_i}|Z_{jk}(t)|^3 \lesssim
\frac{D_n^{2}}{n^{3/2}}\E\max_{1\leq k\leq
p}\sum^{n}_{i=1}\sum_{j\in
\widetilde{N}_i}(|\widetilde{x}_{jk}|^3+|\widetilde{y}_{jk}|^3)
\\ \lesssim & \frac{D_n^{3}}{\sqrt{n}}(m_{x,3}^3+m_{y,3}^3).
\end{align*}
Similarly, we have
\begin{align*}
\E\max_{1\leq l\leq p}\sum^{n}_{i=1}|\mathcal{V}^{(i)}_l(t)|^3\leq &
D_n^{4}\E\max_{1\leq l\leq p}\sum^{n}_{i=1}\sum_{j\in
\mathcal{N}_i}|Z_{jl}(t)|^3\leq \frac{D_n^{4}}{n^{3/2}}\E\max_{1\leq
l\leq p}\sum^{n}_{i=1}\sum_{j\in
\mathcal{N}_i}(|\widetilde{x}_{jl}|^3+|\widetilde{y}_{jl}|^3)
\\ \lesssim & \frac{D_n^{6}}{\sqrt{n}}(m_{x,3}^3+m_{y,3}^3).
\end{align*}
Note that $\int^{1}_{0}w(t)dt\lesssim 1.$ Summarizing the above results,
we have
\begin{align*}
&I_2\lesssim
(G_2+G_1\beta)\phi(M_x,M_y)+(G_3+G_2\beta+G_1\beta^2)\frac{D_n^{3}}{\sqrt{n}}(m_{x,3}^3+m_{y,3}^3),
\\
&I_3\lesssim
(G_3+G_2\beta+G_1\beta^2)\frac{D_n^{2}}{\sqrt{n}}(m_{x,3}^3+m_{y,3}^3).
\end{align*}

Alternatively, we can bound $I_3$ in the following way. By Lemmas A.5
and A.6 in \cite{cck13}, we have
\begin{align*}
|I_3|=&\sum^{n}_{i=1}\sum^{p}_{k,l,j=1}\int^{1}_{0}\int^{1}_{0}(1-\tau)\E[\partial_l\partial_k\partial_jm(Z^{(i)}(t)+\tau
V^{(i)}(t))\dot{Z}_{ij}(t)V_{k}^{(i)}(t)V_{l}^{(i)}(t)]dtd\tau
\\ \lesssim & \sum^{n}_{i=1}\sum^{p}_{k,j,l=1}\int^{1}_{0}\E [U_{kjl}(\mathcal{Z}^{(i)}(t))]\E|\dot{Z}_{ij}(t)V_{k}^{(i)}(t)V_{l}^{(i)}(t)|dt
\\ \lesssim & \sum^{n}_{i=1}\sum^{p}_{k,j,l=1}\int^{1}_{0}\E [U_{kjl}(Z(t))]\E|\dot{Z}_{ij}(t)V_{k}^{(i)}(t)V_{l}^{(i)}(t)|dt
\\ \leq & n(G_3+G_2\beta+G_1\beta^2)\int^{1}_{0}w(t)\max_{1\leq j,k,l\leq p} (\bar{\E}|\dot{Z}_{ij}(t)/w(t)|^3)^{1/3} (\bar{\E}|V_{k}^{(i)}(t)|^3)^{1/3} (\bar{\E}|V_{l}^{(i)}(t)|^3)^{1/3}dt.
\end{align*}
Notice that
\begin{align*}
\max_{1\leq j\leq p}\bar{\E}|\dot{Z}_{ij}(t)/w(t)|^3\leq &
\frac{1}{n^{3/2}}\max_{1\leq j\leq
p}\bar{\E}(|\widetilde{x}_{ij}|+|\widetilde{y}_{ij}|)^3 \lesssim
\frac{1}{n^{3/2}}(\bar{m}_{x,3}^3+\bar{m}_{y,3}^3).
\end{align*}
It is not hard to see that
\begin{align*}
\max_{1\leq k\leq p}\bar{\E}|V_{k}^{(i)}(t)|^3\leq &
D_n^{2}\max_{1\leq k\leq p}\bar{\E}\sum_{j\in
\widetilde{N}_i}|Z_{jk}(t)|^3 \lesssim
\frac{D_n^{3}}{n^{3/2}}(\bar{m}_{x,3}^3+\bar{m}_{y,3}^3).
\end{align*}
Thus we derive that
\begin{align*}
I_3\lesssim
(G_3+G_2\beta+G_1\beta^2)\frac{D_n^{2}}{\sqrt{n}}(\bar{m}_{x,3}^3+\bar{m}_{y,3}^3).
\end{align*}

Therefore, we obtain
\begin{equation}\label{eq:prop1-trun}
\begin{split}
|\E [m(\widetilde{X})-m(\widetilde{Y})]|\lesssim &
(G_2+G_1\beta)\phi(M_x,M_y)+(G_3+G_2\beta+G_1\beta^2)\frac{D_n^{3}}{\sqrt{n}}(m_{x,3}^3+m_{y,3}^3)
\\&+(G_3+G_2\beta+G_1\beta^2)\frac{D_n^{2}}{\sqrt{n}}(\bar{m}_{x,3}^3+\bar{m}_{y,3}^3).
\end{split}
\end{equation}
Using the above arguments, we can show that
\begin{align}\label{eq:prop1-I22}
I_{22}\lesssim
(G_3+G_2\beta+G_1\beta^2)\frac{D_n^{3}}{\sqrt{n}}(\bar{m}_{x,3}^3+\bar{m}_{y,3}^3),
\end{align}
provided that $2\sqrt{5}\beta D_n^3M_{xy}/\sqrt{n} \leq 1$. This proves the last statement of Proposition \ref{prop1}.

Note
that $|m(x)-m(y)|\leq 2G_0$ and $|m(x)-m(y)|\leq G_1\max_{1\leq
j\leq p}|x_j-y_j|$ with $x=(x_1,\dots,x_p)'$ and
$y=(y_1,\dots,y_p)'.$ So
\begin{equation}\label{eq:prop1-trun-error}
\begin{split}
|\E[m(X)-m(\widetilde{X})]|\leq&
|\E[(m(X)-m(\widetilde{X}))\mathcal{I}]|+|\E[(m(X)-m(\widetilde{X}))(1-\mathcal{I})]|
\\ \lesssim& G_1\Delta+G_0\E[1-\mathcal{I}],\\
|\E[m(Y)-m(\widetilde{Y})]| \lesssim&
G_1\Delta+G_0\E[1-\mathcal{I}].
\end{split}
\end{equation}
The conclusion follows by combining (\ref{eq:prop1-trun}),
(\ref{eq:prop1-I22}) and (\ref{eq:prop1-trun-error}).
\end{proof}

\begin{proof}[Proof of Corollary \ref{corollary:m-dep}]
Notice that $D_n=2M+1$, $|\widetilde{N}_i|\leq 2M+1$ and
$|\mathcal{N}_i\cup \widetilde{N}_i|\leq 4M+1$. Define the
$\mathfrak{N}_i=\{j:\{j,k\}\in E_n\text{~for some~}k\in
\mathcal{N}_i\}.$ Then $|\mathfrak{N}_i\cup \mathcal{N}_i\cup
\widetilde{N}_i|\leq 6M+1.$ Following the arguments in the proof of
Proposition \ref{prop1}, we can show that
$$\max_{1\leq l\leq p}\bar{\E}|\mathcal{V}_{l}^{(i)}(t)|^3\lesssim \frac{D_n^{3}}{n^{3/2}}(\bar{m}_{x,3}^3+\bar{m}_{y,3}^3),$$
which implies that
$$I_{22}\lesssim (G_3+G_2\beta+G_1\beta^2)\frac{D_n^{2}}{\sqrt{n}}(\bar{m}_{x,3}^3+\bar{m}_{y,3}^3).$$
The conclusion follows from the proof of Proposition \ref{prop1}.
\end{proof}

\begin{proof}[Proof of Lemma \ref{lemma:self}]
We only need to prove the result for $x>1$ as the inequality holds
trivially for $x<1$. Suppose that the distributions of $A_i$ and
$B_i$ are both symmetric, then we have
\begin{align*}
P\left(\sum^{n}_{i=1}x_{ij}>xV_{nj}\right)\leq &
P\left(\sum^{r}_{i=1}(A_{ij}+B_{ij})>xV_{nj}\right)
\\ \leq &P\left(\sum^{r}_{i=1}A_{ij}>xV_{nj}/2\right)+ P\left(\sum^{r}_{i=1}B_{ij}>xV_{nj}/2\right)
\\ \leq &P\left(\sum^{r}_{i=1}A_{ij}>xV_{1nj}/2\right)+ P\left(\sum^{r}_{i=1}B_{ij}>xV_{2nj}/2\right)
\\ \leq &2\exp(-x^2/8),
\end{align*}
where we have used Theorem 2.15 in \cite{dls2009}. 

Let
$\{\xi_{ij}\}^{n}_{i=1}$ be an independent copy of
$\{x_{ij}\}^{n}_{i=1}$ in the sense that $\{\xi_{ij}\}^{n}_{i=1}$ have the same joint
distribution as that for $\{x_{ij}\}^{n}_{i=1}$, and define
$V_{nj}'$ ($A_{ij}'$ and $B_{ij}'$) in the same way as $V_{nj}$
($A_{ij}$ and $B_{ij}$) by replacing $\{x_{ij}\}^{n}_{i=1}$ with
$\{\xi_{ij}\}^{n}_{i=1}$. Following the arguments in the proof of
Theorem 2.16 in \cite{dls2009}, we deduce that for $x>1$,
\begin{align*}
&\left\{\sum^{n}_{i=1}x_{ij}>x(a_j+b_j+V_{nj}),\sum^{n}_{i=1}\xi_{ij}\leq a_j,V_{nj}'\leq b_j\right\} \\
&\subset\left\{\sum^{n}_{i=1}(x_{ij}-\xi_{ij})\geq x(a_j+b_j+V_{nj})-a_j,V_{nj}'\leq b_j\right\} \\
&\subset\left\{\sum^{n}_{i=1}(x_{ij}-\xi_{ij})\geq x(a_j+b_j+V_{nj}^*-V_{nj}')-a_j,V_{nj}'\leq b_j\right\} \\
&\subset\left\{\sum^{n}_{i=1}(x_{ij}-\xi_{ij})\geq
xV_{nj}^*\right\},
\end{align*}
where we have used the fact that
$$V_{nj}^*\equiv\sqrt{\sum^{r}_{l=1}(A_{lj}-A_{lj}')^2+\sum^{r}_{l=1}(B_{lj}-B_{lj}')^2}\leq V_{nj}+V_{nj}'.$$

We note that $A_{lj}-A_{lj}'$ and $B_{lj}-B_{lj}'$ are symmetric,
and
$$P\left(\sum^{n}_{i=1}\xi_{ij}\leq a_j,V_{nj}'\leq b_j\right)\geq 1/2.$$
Thus we obtain
\begin{align*}
P\left(\sum^{n}_{i=1}x_{ij}\geq
x(a_j+b_j+V_{nj})\right)=&\frac{P(\sum^{n}_{i=1}x_{ij}\geq
x(a_j+b_j+V_{nj}),\sum^{n}_{i=1}\xi_{ij}\leq a_j,V_{nj}'\leq
b_j)}{P(\sum^{n}_{i=1}\xi_{ij}\leq a_j,V_{nj}'\leq b_j)}
\\ \leq &2P\left(\sum^{n}_{i=1}x_{ij}\geq x(a_j+b_j+V_{nj}),\sum^{n}_{i=1}\xi_{ij}\leq a_j,V_{nj}'\leq b_j\right)
\\ \leq &2P\left(\sum^{n}_{i=1}(x_{ij}-\xi_{ij})\geq xV_{nj}^*\right)
\\ \leq &4\exp(-x^2/8).
\end{align*}
Hence we get
$$P\left(\left|\sum^{n}_{i=1}x_{ij}\right|\geq x(a_j+b_j+V_{nj})\right)\leq 8\exp(-x^2/8).$$
In particular, we can choose $b^2_j=4\E V_{nj}^2$ and
$a^2_j=2b^2_j=8\E V_{nj}^2$ because $4\E(\sum^{n}_{i=1}x_{ij})^2\leq
8\E(\sum^{r}_{j=1}A_j)^2+8\E(\sum^{r}_{j=1}B_j)^2=8\E V_{nj}^2$.
\end{proof}

\begin{proof}[Proof of Theorem \ref{thm1}]
Note that
\begin{align*}
\E[1-\mathcal{I}]\leq &P(\max_{1\leq j\leq
p}|X_j-\widetilde{X}_j|>\Delta)+P(\max_{1\leq j\leq
p}|Y_j-\widetilde{Y}_j|>\Delta)
\\ \leq&\sum^{p}_{j=1}\left\{P(|X_j-\widetilde{X}_j|>\Delta)+P(|Y_j-\widetilde{Y}_j|>\Delta)\right\}.
\end{align*}
Let
\begin{align*}
\Lambda_j\equiv
&(2+2\sqrt{2})\sqrt{\sum^{r}_{i=1}\E(A_{ij}-\widetilde{A}_{ij})^2/n+\sum^{r}_{j=1}\E
(B_{ij}-\widetilde{B}_{ij})^2/n}
\\&+\sqrt{\sum^{r}_{i=1}(A_{ij}-\widetilde{A}_{ij})^2/n+\sum^{r}_{i=1}(B_{ij}-\widetilde{B}_{ij})^2/n}=\Lambda_{1j}+\Lambda_{2j},
\end{align*}
where
$$\widetilde{A}_{ij}=\sum^{iN+(i-1)M}_{l=(i-1)(N+M)+1}\widetilde{x}_{lj},\quad \widetilde{B}_{ij}=\sum^{i(N+M)}_{l=iN+(i-1)M+1}\widetilde{x}_{lj}.$$
Applying Lemma \ref{lemma:self} and using the union bound, we have
with probability at least $1-8\gamma$,
\begin{align*}
|X_j-\widetilde{X}_j|\leq \Lambda_j\sqrt{8\log(p/\gamma)},\quad
1\leq j\leq p.
\end{align*}

By the assumption,
$$P(\max_{1\leq i\leq }\max_{1\leq j\leq p}|x_{ij}|\leq M_x)\geq 1-\gamma,\quad P(\max_{1\leq i\leq }\max_{1\leq j\leq p}|y_{ij}|\leq M_y)\geq 1-\gamma.$$
Therefore with probability at least $1-\gamma,$
\begin{align*}
\Lambda_j\leq &(2+2\sqrt{2})\sqrt{\sum^{r}_{i=1}\E
(A_{ij}-\breve{A}_{ij})^2/n+\sum^{r}_{j=1}\E
(B_{ij}-\breve{B}_{ij})^2/n}
\\&+\sqrt{\sum^{r}_{i=1}(\E\breve{A}_{ij})^2/n+\sum^{r}_{i=1}(\E\breve{B}_{ij})^2/n},
\\ \leq &
(3+2\sqrt{2})\varphi(M_x)\sqrt{Nr\sigma_j^2/n+Mr\sigma^2_j/n}\lesssim
\varphi(M_x)\sigma_j,
\end{align*}
where we have used the fact that $\E A_{ij}=\E B_{ij}=0$ and the
Cauchy-Schwarz inequality. The same argument applies to the Gaussian
sequence $\{y_i\}$. 

Summarizing the above results and along with
(\ref{eq:m-dep1}), we deduce that
\begin{equation}
\begin{split}
|\E [m(X)-m(Y)]|\lesssim &
(G_2+G_1\beta)\phi(M_{x},M_y)+(G_3+G_2\beta+G_1\beta^2)\frac{(2M+1)^{2}}{\sqrt{n}}(\bar{m}_{x,3}^3+\bar{m}_{y,3}^3)
\\&+G_1\varphi(M_{x},M_y)\sigma_j\sqrt{8\log(p/\gamma)}+G_0\gamma,
\end{split}
\end{equation}
which also implies that
\begin{equation}
\begin{split}
|\E [g(T_X)-g(T_Y)]|\lesssim &
(G_2+G_1\beta)\phi(M_{x},M_y)+(G_3+G_2\beta+G_1\beta^2)\frac{(2M+1)^{2}}{\sqrt{n}}(\bar{m}_{x,3}^3+\bar{m}_{y,3}^3)
\\&+G_1\varphi(M_{xy})\sigma_j\sqrt{8\log(p/\gamma)}+G_0\gamma+\beta^{-1}G_1\log p,
\end{split}
\end{equation}
for $M$-dependent sequence, provided that
$2\sqrt{5}\beta(6M+1)M_{xy}/\sqrt{n}<1$. Consider a ``smooth''
indicator function $g_0\in C^3(\mathbb{R}): \mathbb{R}\rightarrow
[0,1]$ such that $g_0(s)=1$ for $s\leq 0$ and $g_0(s)=0$ for $s\geq
1.$ Fix any $t\in\mathbb{R}$ and define
$g(s)=g_0(\psi(s-t-e_{\beta}))$ with $e_{\beta}=\beta^{-1}\log p$.
The conclusion follows from the proof of Corollary F.1 in
\cite{cck13} and Lemma 2.1 in \cite{cck12} regarding the
anti-concentration property for Gaussian distribution. We omit the
details to conserve the space.
\end{proof}

\begin{proof}[Proof of Theorem \ref{thm2}]
Let $\breve{x}_{ij}=x_{ij}-\widetilde{x}_{ij}$. Define
$\chi_{(l+1)k}=(x_{(1+l)k}\wedge M_x)\vee(-M_x)$ and
$\chi_{(l+1)k}^{(l-1)}=(x_{(1+l)k}^{(l-1)}\wedge M_x)\vee(-M_x)$.
Using the fact that $x_{1j}$ and $x_{(1+l)k}^{(l-1)}$ are
independent for any $1\leq j,k\leq p$ and $\E
x_{ij}=\E\breve{x}_{ij}=0$, we obtain for $l>0$,
\begin{align*}
|\E \breve{x}_{1j}x_{(l+1)k}|=&   |\E
\breve{x}_{1j}(x_{(1+l)k}-x_{(1+l)k}^{(l-1)})|
\\ \leq & (\E\breve{x}_{1j}^2)^{1/2}(\E|(x_{(1+l)k}-x_{(1+l)k}^{(l-1)})|^2)^{1/2}
\\ \leq & (\E x_{1j}^4)^{1/2}(\E|(x_{(1+l)k}-x_{(1+l)k}^{(l-1)})|^2)^{1/2}/M_x.
\end{align*}
Using the fact that the map $x\rightarrow (x\wedge M_x)\vee (-M_x)$
is lipschitz continuous, we deduce that
\begin{align*}
|\E x_{1j}\breve{x}_{(l+1)k}|= &   |\E
x_{1j}\{\breve{x}_{(1+l)k}-\breve{x}_{(1+l)k}^{(l-1)}-\E(\chi_{(1+l)k}-\chi_{(1+l)k}^{(l-1)})\}\mathbf{I}\{|x_{(l+1)k}|>M_x\text{~or~}|x_{(l+1)k}^{(l-1)}|>M_x\}|
\\ \lesssim & (\E
|x_{1j}|^3)^{1/3}(\E|(\breve{x}_{(1+l)k}-\breve{x}_{(1+l)k}^{(l-1)})|^3+\E|\chi_{(1+l)k}-\chi_{(1+l)k}^{(l-1)}|^3)^{1/3}
\\&(P(|x_{(1+l)k}|>M_x)+P(|x_{(l+1)k}^{(l-1)}|>M_x))^{1/3}
\\ \lesssim & (\E |x_{1j}|^3)^{1/3}(\E|(x_{(1+l)k}-x_{(1+l)k}^{(l-1)})|^3)^{1/3}(\E|x_{(1+l)k}|^3+\E|x_{(1+l)k}^{(l-1)}|^3)^{1/3}/M_x.
\end{align*}
Note for $l=0,$ $|\E \breve{x}_{1j}x_{(l+1)k}|\leq (\E
x_{1j}^4)^{1/2}(\E x_{1k}^2)^{1/2}/M_x$. It is not hard to show that
the above result holds if $x_{1j}$ (or $x_{(l+1)k}$) is replaced by
its $\widetilde{x}_{1j}$ (or $\widetilde{x}_{(l+1)k}$). Therefore by
(\ref{eq:cov-ts}) and the assumptions, we have
\begin{align*}
\max_{1\leq j,k\leq
p}\sum^{n}_{i=1}|\E[\dot{Z}_{ij}(t)V_{k}^{(i)}(t)]|\lesssim
(1/M_x+1/M_y).
\end{align*}
Thus we may set $\phi(M_x,M_y)=C(1/M_x+1/M_y)$ for some constant
$C>0.$ 

Next we consider $\varphi(M_x,M_y)$. By the stationarity, we
have
\begin{equation}\label{eq:varphi}
\begin{split}
\sum^{N-1}_{l=1-N}|\E\breve{x}_{1k}\breve{x}_{(1+l)k}|=&
2\sum^{N-1}_{l=1}|\E\breve{x}_{1k}\{\breve{x}_{(1+l)k}-\breve{x}_{(1+l)k}^{(l-1)}-\E(\chi_{(1+l)k}-\chi_{(1+l)k}^{(l-1)})\}
\\&\mathbf{I}\{|x_{(1+l)k}|>M_x\text{~or~}|x_{(1+l)k}^{(l-1)}|>M_x\}|+E|\breve{x}_{1k}|^2
\\ \lesssim & 2\sum^{N-1}_{l=1}(\E|\breve{x}_{1k}|^2)^{1/2}(\E|(x_{(1+l)k}-x_{(1+l)k}^{(l-1)})|^3+\E|\chi_{(1+l)k}-\chi_{(1+l)k}^{(l-1)}|^3)^{1/3}\
\\&(P(|x_{(1+l)k}|>M_x)+P(|x_{(1+l)k}^{(l-1)}|>M_x))^{1/6}+E|\breve{x}_{1k}|^2
\\ \lesssim & 2\sum^{N-1}_{l=1}(\E|x_{1k}|^4/M_x^2)^{1/2}(\E|(x_{(1+h)k}-x_{(1+h)k}^{(h-1)})|^3)^{1/3}
\\&(\E|x_{(1+l)k}|^4/M_x^4+\E|x_{(1+l)k}^{(h-1)}|^4/M_x^4)^{1/6}+E|x_{1k}|^4/M_x^2
\\ \lesssim & 1/M_x^{5/3}.
\end{split}
\end{equation}
Also note that
$(\E\breve{A}_{ij})^2/N=N(\E\chi_{1j})^2=N\{\E(\chi_{1j}-x_{1j})\}^2\leq
N(\E x_{1j}^4/M_x^3)^2$ and $(\E\breve{B}_{ij})^2/M \leq M(\E
x_{1j}^4/M_x^3)^2$. Because
$E(A_{ij}-\widetilde{A}_{ij})^2/N\lesssim 1/M_x^{5/3}$ and
$E(B_{ij}-\widetilde{B}_{ij})^2/M\lesssim 1/M_x^{5/3}$ by
(\ref{eq:varphi}), we can choose
$\varphi(M_x)=C'(1/M_{x}^{5/6}+\sqrt{N}/M_x^3)$ for some constant
$C'>0$. By the assumption that $\max_{1\leq k\leq
p}\E|\mathcal{G}_k(\dots,\epsilon_{i-1},\epsilon_{i})|^4<\infty$ and
the fact that $\E y_{ij}^2=\E x_{ij}^2$, we have $E|x_{ij}|^3\leq
(\E|\mathcal{G}_j(\dots,\epsilon_{i-1},\epsilon_{i})|^4)^{3/4}$,
$$\E|y_{ij}|^3\leq (\E|y_{ij}|^4)^{3/4}\lesssim (\E|y_{ij}|^2)^{3/2}=(\E|x_{ij}|^2)^{3/2}\leq \E|x_{ij}|^3<\infty,$$
and
$$E|y_{ij}|^4\lesssim (E|y_{ij}|^2)^2=(E|x_{ij}|^2)^2\leq E|x_{ij}|^4<\infty.$$
Using similar arguments, we can show that
$\varphi(M_y)=C''(1/M_{y}^{5/6}+\sqrt{N}/M_y^3)$ for some constant
$C''>0$. The above argument also implies that
$\bar{m}^3_{x,3}+\bar{m}^3_{y,3}<\infty.$ Thus we ignore the
constants and set $\psi=O(n^{1/8}M^{-1/2}l_n^{-3/8})$ and $M_x=M_y=u=O(n^{3/8}M^{-1/2}l_n^{-5/8}).$

Let $2\sqrt{5}\beta(6M+1)M_{xy}/\sqrt{n}=1$, that is
$\beta=O(\sqrt{n}/(uM)).$ It is straightforward to check the
following:
\begin{align*}
&(\psi^2+\psi\beta)\phi(M_x,M_y)\lesssim \psi^2/u+\psi\sqrt{n}/(u^2M)\lesssim n^{-1/8}M^{1/2}l_n^{7/8},\\
&(\psi^3+\psi^2\beta+\psi\beta^2)\frac{(2M+1)^{2}}{\sqrt{n}} \lesssim \frac{\psi^3 M^2}{\sqrt{n}}+\frac{\psi^2M}{u}+\frac{\psi\sqrt{n}}{u^2}\lesssim n^{-1/8}M^{1/2}l_n^{7/8},\\
&\psi\varphi(M_{x},M_y)\sigma_j\sqrt{8\log(p/\gamma)}\lesssim \frac{\psi l_n^{1/2}}{u^{5/6}}+\frac{\sqrt{N}\psi l_n^{1/2}}{u^3}\lesssim n^{-1/8}M^{1/2}l_n^{7/8}, \\
&(e_{\beta}+\psi^{-1})\sqrt{1\vee\log (p\psi)}\lesssim
\frac{l_n^{3/2}Mu}{\sqrt{n}}+\psi^{-1} l_n^{1/2}\lesssim
n^{-1/8}M^{1/2}l_n^{7/8}.
\end{align*}
Therefore we get
\begin{align}
\rho_n:=&\sup_{t\in\mathbb{R}}|P(T_X\leq t)-P(T_Y\leq t)|\lesssim
n^{-1/8}M^{1/2}l_n^{7/8}+\gamma.
\end{align}

Under Condition (\rmnum{1}) in Assumption \ref{assum:tail}, $\E
h(\max_{1\leq j\leq p}|x_{ij}|/\mathfrak{D}_n)\leq 1$. By Lemma 2.2
in \cite{cck13}, we have $u_x(\gamma)\lesssim
\max\{\mathfrak{D}_nh^{-1}(n/\gamma),l_n^{1/2}\}$ and
$u_y(\gamma)\lesssim l_n^{1/2}.$ Because
$n^{3/8}M^{-1/2}l_n^{-5/8}\geq
C\max\{\mathfrak{D}_nh^{-1}(n/\gamma),l_n^{1/2}\}$, we can always
choose $u=O(n^{3/8}M^{-1/2}l_n^{-5/8})$ such that
\begin{equation}
P(\max_{1\leq i\leq n}\max_{1\leq j\leq p}|x_{ij}|\leq u)\geq
1-\gamma,\quad P(\max_{1\leq i\leq n}\max_{1\leq j\leq
p}|y_{ij}|\leq u)\geq 1-\gamma.
\end{equation}
Using similar arguments, we can prove the result under Condition
(\rmnum{2}) in Assumption \ref{assum:tail}. The proof is thus
completed.
\end{proof}

The following lemma verifies condition (\ref{eq:m-dep-couple}).
\begin{lemma}\label{lemma:m-dep}
Assume that $\max_{1\leq k\leq
p}\sum^{+\infty}_{j=1}j\theta_{j,k,3}(x)<\infty.$ Then
\begin{align*}
&\sup_{M}\max_{1\leq k\leq p}\sum^{M}_{l=1}(\E|x_{(1+l)k}^{(M)}
-x_{(1+l)k}^{(l-1)}|^3)^{1/3}\leq \max_{1\leq k\leq
p}\sum^{+\infty}_{j=1}j\theta_{j,k,3}(x)<\infty.
\end{align*}
\end{lemma}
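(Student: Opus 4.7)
The plan is to decompose $x_{(1+l)k}^{(M)} - x_{(1+l)k}^{(l-1)}$ telescopically into unit increments that add one innovation at a time into the conditioning set, and then to bound each increment using a standard coupling argument that swaps out a single innovation for an independent copy. Writing $i = 1+l$, the telescoping step reads
$$
x_{ik}^{(M)} - x_{ik}^{(l-1)} \;=\; \sum_{h=l}^{M} \bigl( x_{ik}^{(h)} - x_{ik}^{(h-1)} \bigr),
$$
since $\mathcal{F}_{h-1}(i) \subset \mathcal{F}_{h}(i)$ by construction.

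For each increment, I would introduce an independent copy $\epsilon_{i-h}'$ of $\epsilon_{i-h}$ and define $x_{ik}' = \mathcal{G}_k(\dots,\epsilon_{i-h-1},\epsilon_{i-h}',\epsilon_{i-h+1},\dots,\epsilon_i)$. Since $\epsilon_{i-h}'$ is independent of the conditioning variables, and since $(x_{ik}', \epsilon_{i-h+1}, \dots, \epsilon_i)$ has the same joint law as $(x_{ik}, \epsilon_{i-h+1}, \dots, \epsilon_i)$, one obtains
$$
\E[x_{ik}'\mid \mathcal{F}_h(i)] \;=\; \E[x_{ik}'\mid \mathcal{F}_{h-1}(i)] \;=\; x_{ik}^{(h-1)},
$$
so that $x_{ik}^{(h)} - x_{ik}^{(h-1)} = \E[x_{ik} - x_{ik}'\mid \mathcal{F}_h(i)]$. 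Conditional Jensen gives $\E|x_{ik}^{(h)} - x_{ik}^{(h-1)}|^3 \leq \E|x_{ik} - x_{ik}'|^3$, and shifting the time index by $-(i-h)$ (using stationarity) identifies the latter with $\E|x_{hk} - x_{hk}^*|^3 = \theta_{h,k,3}(x)^3$, where $x_{hk}^*$ is exactly the coupled version in the definition of $\theta_{\cdot,\cdot,3}(x)$.

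Combining these with Minkowski's inequality applied to the telescoping sum yields
$$
\bigl(\E|x_{(1+l)k}^{(M)} - x_{(1+l)k}^{(l-1)}|^3\bigr)^{1/3} \;\leq\; \sum_{h=l}^{M} \theta_{h,k,3}(x).
$$
Summing over $l = 1,\dots,M$ and interchanging the order of summation (for each $h \in [1,M]$, the index $l$ ranges over $\{1,\dots,h\}$) delivers
$$
\sum_{l=1}^{M}\sum_{h=l}^{M}\theta_{h,k,3}(x) \;=\; \sum_{h=1}^{M} h\,\theta_{h,k,3}(x) \;\leq\; \sum_{h=1}^{+\infty} h\,\theta_{h,k,3}(x),
$$
which is uniform in $M$ and finite by hypothesis. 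Taking the maximum over $k$ finishes the argument.

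The only structurally interesting step is the coupling identity $\E[x_{ik}'\mid \mathcal{F}_h(i)] = x_{ik}^{(h-1)}$, which is what connects the ``one innovation at a time'' decomposition to the physical dependence measure $\theta_{h,k,3}(x)$; the telescoping, the triangle inequality in $L^3$, and the final swap of summation indices are all routine. No obstacle beyond careful bookkeeping of indices is anticipated.
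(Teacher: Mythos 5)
Your proof is correct and follows essentially the same route as the paper: the telescoping increments $x_{ik}^{(h)}-x_{ik}^{(h-1)}$ are exactly the martingale-difference projections $\mathcal{P}_h x_{ik}$ used there, your coupling identity $\mathcal{P}_h x_{ik}=\E[x_{ik}-x_{ik}'\mid\mathcal{F}_h(i)]$ is the same representation the paper writes down, and the conditional Jensen bound, Minkowski in $L^3$, and the swap of summation indices giving $\sum_{h=1}^{M}h\,\theta_{h,k,3}(x)$ all match. No gaps.
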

\begin{proof}[Proof of Lemma \ref{lemma:m-dep}]
Define the projection
$\mathcal{P}_jx_{ik}=\E[x_{ik}|\mathcal{F}_{j}(i)]-\E[x_{ik}|\mathcal{F}_{j-1}(i)]$.
Then we have
\begin{align*}
x_{(1+l)k}^{(M)}
-x_{(1+l)k}^{(l-1)}=&\E[\mathcal{G}_k(\dots,\epsilon_{l},\epsilon_{l+1})|\mathcal{F}_{M}(l+1)]-\E[\mathcal{G}_k(\dots,\epsilon_{l},\epsilon_{l+1})|\mathcal{F}_{l-1}(l+1)]
=\sum^{M}_{j=l}\mathcal{P}_jx_{(l+1)k}.
\end{align*}
Note that
\begin{align*}
\mathcal{P}_jx_{ik}=&\E[x_{ik}|\mathcal{F}_{j}(i)]-\E[x_{ik}|\mathcal{F}_{j-1}(i)]
\\=&\E[\mathcal{G}_k(\dots,\epsilon_{i-1},\epsilon_i)-\mathcal{G}_k(\dots,\epsilon_{i-j}',\epsilon_{i-j+1},\dots,\epsilon_{i-1},\epsilon_i)|\mathcal{F}_j(i)]
\\=&\E[\mathcal{G}_k(\dots,\epsilon_{j-1},\epsilon_j)-\mathcal{G}_k(\dots,\epsilon_{0}',\epsilon_{1},\dots,\epsilon_{j-1},\epsilon_j)|\mathcal{F}_j(j)].
\end{align*}
Jensen's inequality yields that
$(\E|\mathcal{P}_{j}x_{ik}|^q)^{1/q}\leq \theta_{j,k,q}(x)$ which
implies that
$$(\E|x_{(1+l)k}^{(M)} -x_{(1+l)k}^{(l-1)}|^3)^{1/3}\leq \sum^{M}_{j=l}(\E|\mathcal{P}_{j}x_{(l+1)k}|^3)^{1/3}\leq \sum^{M}_{j=l}\theta_{j,k,3}(x).$$
Therefore, we obtain
\begin{align*}
\sup_{M}\max_{1\leq k\leq p}\sum^{M}_{l=1}(\E|x_{(1+l)k}^{(M)}
-x_{(1+l)k}^{(l-1)}|^3)^{1/3}\leq& \sup_{M}\max_{1\leq k\leq
p}\sum^{M}_{l=1}\sum^{M}_{j=l}\theta_{j,k,3}(x)\leq \max_{1\leq
k\leq p}\sum^{+\infty}_{j=1}j\theta_{j,k,3}(x)<\infty.
\end{align*}
\end{proof}

\begin{proof}[Proof of Theorem \ref{thm:Gaussian-dep}]
We need to verify that the $M$-dependent approximation
$\{x_i^{(M)}\}$ satisfies the assumptions in Theorem \ref{thm2}.
Using the convexity of $h(\cdot)$ and Jensen's inequality we have
$$\E h(\max_{1\leq j\leq p}|x_{ij}^{(M)}|/\mathfrak{D}_n)\leq \E h(\max_{1\leq j\leq p}|x_{ij}|/\mathfrak{D}_n)\leq 1,$$
under Condition (\rmnum{1}) in Assumption \ref{assum:tail}, and
$$\max_{1\leq j\leq p}\E \exp(|x_{ij}^{(M)}|/\mathfrak{D}_n)\leq \max_{1\leq j\leq p}\E \exp(|x_{ij}|/\mathfrak{D}_n)\leq 1,$$
under Condition (\rmnum{2}) in Assumption \ref{assum:tail}. 

 We claim
that as $M\rightarrow +\infty,$
\begin{equation}\label{eq:thm:Gaussian-dep}
\sup_p\max_{1\leq j\leq p}\sum^{+\infty}_{h=-\infty}|\E
x_{ij}^{(M)}x_{(i+h)j}^{(M)}-\E x_{ij}x_{(i+h)j}|\rightarrow 0,
\end{equation}
which implies that $\max_{1\leq j\leq
p}|\sigma_{j,j}^{(M,n)}-\sigma_{j,j}^{(n)}|\rightarrow 0$ and
$\max_{1\leq j\leq p}|(\sigma_j^{(M)})^2-\sigma_j^2|\rightarrow 0$
with $\sigma_{j,j}^{(M,n)}=\sum^{n-1}_{h=1-n}(n-|h|)\E
x_{ij}^{(M)}x_{(i+h)j}^{(M)}/n$ and
$(\sigma_j^{(M)})^2=\sum^{+\infty}_{h=-\infty}|\E
x_{ij}^{(M)}x_{(i+h)j}^{(M)}|$. Thus under the assumptions in
Theorem \ref{thm:Gaussian-dep}, we have $c_1/2<\min_{1\leq j\leq
p}\sigma_{j,j}^{(M,n)}\leq \max_{1\leq j\leq
p}\sigma_{j,j}^{(M,n)}<2c_2$ for some constants $0<c_1<c_2$
uniformly for all large enough $M$. 

To show
(\ref{eq:thm:Gaussian-dep}), we note that
\begin{align*}
&\sum^{+\infty}_{h=-\infty}|\E x_{ij}^{(M)}x_{(i+h)j}^{(M)}-\E
x_{ij}x_{(i+h)j}| \\=&\sum^{M}_{h=-M}|\E
x_{ij}^{(M)}x_{(i+h)j}^{(M)}-\E x_{ij}x_{(i+h)j}|+\sum_{|h|>M}|\E
x_{ij}x_{(i+h)j}|=I_{1j}(M)+I_{2j}(M).
\end{align*}
For the first term, we have
\begin{align*}
&I_{1j}(M)\leq \sum^{M}_{h=-M}|\E
x_{ij}^{(M)}(x_{(i+h)j}^{(M)}-x_{(i+h)j})|+\sum^{M}_{h=-M}|\E
(x_{ij}^{(M)}-\E x_{ij})x_{(i+h)j}|
\\ \leq& \sum^{M}_{h=-M}\{\E
(x_{ij}^{(M)})^2\}^{1/2}\{\E(x_{(i+h)j}^{(M)}-x_{(i+h)j})^2\}^{1/2}+\sum^{M}_{h=-M}\{\E
(x_{ij}^{(M)}-\E x_{ij})^2\}^{1/2}\{\E(x_{(i+h)j})^2\}^{1/2}
\\ \lesssim &
M\{\E(x_{1j})^2\}^{1/2}\{\E(x_{1j}^{(M)}-x_{1j})^2\}^{1/2}\leq
M\{\E(x_{1j})^2\}^{1/2}\sum^{+\infty}_{l=M+1}(\E|\mathcal{P}_lx_{1j}|^2)^{1/2}
\\ \leq & \{\E(x_{1j})^2\}^{1/2}\sum^{+\infty}_{l=M+1}l\theta_{l,j,2}(x)\leq
\{\E(x_{1j})^2\}^{1/2}\sum^{+\infty}_{l=M+1}l\theta_{l,j,3}(x),
\end{align*}
where we have used the fact that
$x_{1j}-x_{1j}^{(M)}=\sum^{+\infty}_{l=M+1}\mathcal{P}_lx_{1j}$ and
$(\E|\mathcal{P}_{l}x_{1j}|^q)^{1/q}\leq \theta_{l,j,q}(x)$. Under
the assumption that $\sum^{+\infty}_{j=1}\max_{1\leq k\leq
p}j\theta_{j,k,3}(x)\leq \sum^{+\infty}_{j=1}\ell_j<\infty$, we have
$\max_{1\leq j\leq p}I_{1j}(M)\rightarrow 0$ as
$M\rightarrow+\infty.$ On the other hand, note that for $h>M$
\begin{align*}
\E x_{ij}x_{(i+h)j}=\E x_{ij}(x_{(i+h)j}-x_{(i+h)j}^{(h-1)})\leq (\E
x_{ij}^2)^{1/2}\{\E(x_{(i+h)j}-x_{(i+h)j}^{(h-1)})^2\}^{1/2}.
\end{align*}
Thus we have
\begin{align*}
\max_{1\leq j\leq p}I_{2j}(M)\lesssim \max_{1\leq j\leq
p}\sum_{h>M}\sum_{l\geq h}\theta_{l,j,3}(x)\leq \max_{1\leq j\leq
p}\sum_{l=M+1}^{+\infty}l\theta_{l,j,3}(x)\leq
\sum^{+\infty}_{l=M+1}\ell_l, \end{align*} which implies that
$\max_{1\leq j\leq p}I_{2j}(M)\rightarrow 0$ as $M\rightarrow
+\infty.$ 

Lemma \ref{lemma:m-dep} verifies the first condition in
(\ref{eq:condition}). The same arguments apply to $\{y_i\}$. The
triangle inequality and (\ref{eq:m-approx}) imply that
\begin{align*}
|\E[m(X)-m(Y)]|\lesssim &
|\E[m(X^{(M)})-m(Y^{(M)})]|+(G_0G_1^q)^{1/(1+q)}\left(\sum^{p}_{j=1}\Theta_{M,j,q}^q\right)^{1/(1+q)},
\end{align*}
where $Y^{(M)}=\sum^{n}_{i=1}y_i^{(M)}/\sqrt{n}$ with $y_i^{(M)}$
being the $M$-dependent approximation for $\{y_i\}$. The conclusion
thus follows from Theorem \ref{thm1} and Theorem \ref{thm2}.
\end{proof}

\begin{proof}[Proof of Proposition \ref{prop:dim-free}]
Without loss of generality, we assume that $\pi(i)=i.$ Define two
events $\mathcal{D}_x=\{\max_{1\leq j\leq q}X_j>\max_{q+1\leq j\leq
p}X_j\}$ and $\mathcal{D}_y=\{\max_{1\leq j\leq q}Y_j>\max_{q+1\leq
j\leq p}Y_j\}$. Simple algebra yields that uniformly for all
$z\in\mathbb{R}$,
\begin{align*}
& \left|P\left(\max_{1\leq j\leq p}X_j\leq
z\right)-P\left(\max_{q+1\leq j\leq p}X_j\leq z\right)\right|
\\ \leq &
\left|P\left(\max_{1\leq j\leq p}X_j\leq
z,\mathcal{D}_x\right)+P\left(\max_{1\leq j\leq p}X_j\leq
z,\mathcal{D}^c_x\right)-P\left(\max_{q+1\leq j\leq p}X_j\leq
z\right)\right|
\\ \leq & \left|P\left(\max_{1\leq j\leq q}X_j\leq z,\mathcal{D}_x\right)+P\left(\max_{q+1\leq j\leq p}X_j\leq z,\mathcal{D}^c_x\right)-P\left(\max_{q+1\leq j\leq p}X_j\leq z\right)\right|
\\ \leq & \left|P\left(\max_{1\leq j\leq q}X_j\leq z,\mathcal{D}_x\right)-P\left(\max_{q+1\leq j\leq p}X_j\leq z,\mathcal{D}_x\right)\right|\leq 2P(\mathcal{D}_x).
\end{align*}

Next we analyze $P(\mathcal{D}_x)$ and $P(\mathcal{D}_y)$. Under the
assumptions in Corollary 2.1 of \cite{cck13}, we have
\begin{align}\label{eq:dim-free1}
\sup_{z\in\mathbb{R}}\left|P\left(\max_{q+1\leq i\leq p}X_i\leq
z\right)-P\left(\max_{q+1\leq i\leq p}Y_i\leq
z\right)\right|\lesssim n^{-c},\quad c>0.
\end{align}
Notice that in this case, we allow $p=O(\exp(n^b))$ with $b<1/7$
(assuming that $B_n=O(1)$ in Corollary 2.1 of \cite{cck13}). By
(\ref{eq:dim-free1}), and the independence between $\{z_{i1}\}$ and
$\{z_{i2}\}$, we obtain
\begin{align*}
P(\mathcal{D}_x)\leq& \sum^{q}_{j=1}\E\left[P\left(\max_{q+1\leq
i\leq p}X_i<X_j\bigg|X_j\right)\right] \lesssim
\sum^{q}_{j=1}\E\left[P_y\left(\max_{q+1\leq i\leq
p}Y_i<X_j\right)\right]+qn^{-c},
\end{align*}
where $P_y$ denotes the probability measure with respect to
$(Y_{q+1},\dots,Y_{p})$. 

Let $\bar{\sigma}=\max_{1\leq j\leq
p}\sigma_{j,j}$. Using the concentration inequality (see e.g. (7.3)
of \cite{l2001} and Theorem A.2.1 of \cite{vw96}),
$$P\left(\max_{q+1\leq i\leq p}Y_i\leq \E \max_{q+1\leq i\leq p}Y_i-r \right)\leq e^{-r^2/(2\bar{\sigma})},$$
for $r>0$, we have
\begin{align*}
P\left(\max_{q+1\leq i\leq p}Y_i<x\right)\leq
\exp\left(-\frac{1}{2\bar{\sigma}}\left(\E \max_{q+1\leq i\leq
p}Y_i-x\right)^2_+\right),
\end{align*}
where $x_+=x\mathbf{I}\{x\geq 0\}$. Under the assumption that $q/ \E
\max_{q+1\leq i\leq p}Y_i \rightarrow 0,$ we can choose
$\widetilde{q}\rightarrow+\infty$ such that $\widetilde{q}/\E
\max_{q+1\leq i\leq p}Y_i \rightarrow 0$ and
$q/\widetilde{q}\rightarrow 0$. Then we have
\begin{align*}
& \sum^{q}_{j=1}\E\left[P_y\left(\max_{q+1\leq i\leq
p}Y_i<X_j\right)\right]\leq
\sum^{q}_{j=1}\E\exp\left(-\frac{1}{2\bar{\sigma}}\left(\E
\max_{q+1\leq i\leq p}Y_i-X_j\right)^2_+\right)
\\ \leq & \sum^{q}_{j=1}\E\exp\left(-\frac{1}{2\bar{\sigma}}\left(\E \max_{q+1\leq i\leq p}Y_i-X_j\right)^2_+\right)\mathbf{I}\{X_j\leq \widetilde{q}\}+
\sum^{q}_{j=1}\E\mathbf{I}\{X_j>\widetilde{q}\}
\\ \leq & \exp\left(\log q-\frac{1}{2\bar{\sigma}}\left(\E \max_{q+1\leq i\leq p}Y_i-\widetilde{q}\right)^2_+\right)+q\max_{1\leq j\leq q}\E|X_j|/\widetilde{q}=o(1).
\end{align*}
Moreover, if $q/\E \max_{q+1\leq j\leq p}Y_j=O(n^{-c'})$ for $c'>0$,
we can replace $o(1)$ by $O(n^{-c''})$ for some $c''>0.$ Thus we get
\begin{align*}
&\sup_{z\in\mathbb{R}}\left|P\left(\max_{1\leq j\leq p}X_j\leq
z\right)-P\left(\max_{q+1\leq j\leq p}X_j\leq z\right)\right|\leq
2P(\mathcal{D}_x)
\\ \lesssim & \sum^{q}_{j=1}\E\left[P_y\left(\max_{q+1\leq i\leq p}Y_i<X_j\right)\right]+qn^{-c} \lesssim n^{-c''}.
\end{align*}
Similar argument applies to $\{Y_i\}$ and the conclusion follows
from (\ref{eq:dim-free1}).
\end{proof}

\subsection{Proofs of the main results in Section \ref{sec:boot}}
\begin{proof}[Proof of Lemma \ref{lemma:m-dep-boot}]
By the triangle inequality and the stationarity, we have
\begin{align*}
E_{A}\leq& \max_{1\leq j,k\leq
p}\left|\frac{1}{Nr}\sum^{r}_{i=1}(A_{ij}A_{ik}-\E
A_{ij}A_{ik})\right|+\max_{1\leq j,k\leq
p}\left|\frac{1}{Nr}\sum^{r}_{i=1}\E
A_{ij}A_{ik}-\sigma_{j,k}\right| +\max_{1\leq j,k\leq
p}|\sigma_{j,k}-\sigma_{j,k}^{(n)}|
\\ \leq & \max_{1\leq j,k\leq
p}\left|\frac{1}{Nr}\sum^{r}_{i=1}(A_{ij}A_{ik}-\E
A_{ij}A_{ik})\right|+\max_{1\leq j,k\leq p}\left|\sum_{|l|\geq N}\E
x_{i+l,j}x_{i,k}+\frac{1}{N}\sum^{N-1}_{l=1-N}|l|\E
x_{i+l,j}x_{i,k}\right|
\\ & +\max_{1\leq j,k\leq p}|\sigma_{j,k}-\sigma_{j,k}^{(n)}|
\\ \leq& \max_{1\leq j,k\leq
p}\left|\frac{1}{Nr}\sum^{r}_{i=1}(A_{ij}A_{ik}-\E
A_{ij}A_{ik})\right|+\frac{4}{N}\max_{1\leq j,k\leq
p}\sum^{+\infty}_{l=-\infty}|l||\E x_{i+l,j}x_{i,k}|.
\end{align*}

Note that for any $1\leq j,k\leq p,$ $\{A_{ij}A_{ik}\}$ is a
sequence of i.i.d random variables. Let $\sigma_{A,N}^2=\max_{1\leq
j,k\leq p}\E (A_{ij}A_{ik})^2/N^2$ and
$\mathcal{M}_{A,N}=\max_{1\leq i\leq r}\max_{1\leq j\leq
p}|A_{ij}/\sqrt{N}|^4$. Then by Lemma A.1 in \cite{cck13}, we have
\begin{align*}
\E \max_{1\leq j,k\leq
p}\left|\frac{1}{Nr}\sum^{r}_{i=1}(A_{ij}A_{ik}-\E
A_{ij}A_{ik})\right|\lesssim \sigma_{A,N}\sqrt{2\log p/r}+2\log
p\sqrt{\E \mathcal{M}_{A,N}}/r.
\end{align*}
Cauchy-Schwarz inequality yields that
\begin{align*}
\sigma^2_{A,N}\leq& \frac{1}{N^2}\max_{1\leq j\leq p}\E (A_{ij})^4
\leq \frac{1}{N^2}\max_{1\leq j\leq p}\E
\sum_{i_1,i_2,i_3,i_4=1}^Nx_{i_1j}x_{i_2j}x_{i_3j}x_{i_4j}
\\ \leq& \frac{1}{N^2}\max_{1\leq j\leq p}\sum_{i_1,i_2,i_3,i_4=1}^N\bigg\{\text{cum}(x_{i_1j},x_{i_2j},x_{i_3j},x_{i_4j})+\gamma_{x,jj}(i_1-i_3)\gamma_{x,jj}(i_2-i_4)
\\&+\gamma_{x,jj}(i_1-i_2)\gamma_{x,jj}(i_3-i_4)+\gamma_{x,jj}(i_1-i_4)\gamma_{x,jj}(i_2-i_3)\bigg\}
\\ \leq& \max_{1\leq j\leq p}\left\{\frac{1}{N}\sum_{i_1,i_2,i_3=-\infty}^{+\infty}|\text{cum}(x_{i_1j},x_{i_2j},x_{i_3j},x_{0j})|+3\left(\sum^{+\infty}_{h=-\infty}|\gamma_{x,jj}(h)|\right)^2\right\}\lesssim \bar{\sigma}^2_{x,N}.
\end{align*}

On the other hand, with $h(x)=\exp(x)-1$, we have
\begin{equation}\label{eq:block3}
\begin{split}
\left(\E \max_{1\leq i\leq r}\max_{1\leq j\leq
p}|A_{ij}/\sqrt{N}|^4\right)^{1/4}\lesssim& \left|\left|\max_{1\leq
i\leq r}\max_{1\leq j\leq p}|A_{ij}/\sqrt{N}|\right|\right|_{h}
\\ \lesssim& \log(rp)\max_{1\leq
i\leq r}\max_{1\leq j\leq p}||A_{ij}/\sqrt{N}||_{h}
\\ =& \log(rp)\max_{1\leq j\leq p}||A_{1j}/\sqrt{N}||_{h},
\end{split}
\end{equation}
where we have used Lemma 2.2.2 in \cite{vw96}. It implies that
\begin{align*}
\sqrt{\E \mathcal{M}_{A,N}}\leq \{\log(rp)\}^2\max_{1\leq j\leq
p}\left|\left|\sum^{N}_{i=1}x_{ij}/\sqrt{N}\right|\right|_{h}^2.
\end{align*}

Combining the above arguments, we deduce that
\begin{align*}
&\E E_{A}\lesssim \bar{\sigma}_{x,N}\sqrt{\log p/r}+\log
p\{\log(rp)\}^2\zeta_{x,h,N}^2/r+\varpi_x/N,\\
&\E E_{B}\lesssim \bar{\sigma}_{x,M}\sqrt{\log p/r}+\log
p\{\log(rp)\}^2\zeta_{x,h,M}^2/r+\varpi_x/M.
\end{align*}
Alternatively, note that $\left(\E \max_{1\leq i\leq r}\max_{1\leq
j\leq p}|A_{ij}/\sqrt{N}|^4\right)^{1/4}\leq
r^{1/4}\varsigma_{x,N}.$ The conclusion follows from the above
arguments.
\end{proof}

\begin{proof}[Proof of Theorem \ref{lemma:m-dep-boot}]
By Theorem \ref{thm2}, $\rho_n\lesssim
n^{-1/8}M^{1/2}l_n^{7/8}+\gamma.$ Choosing $\gamma=O(n^{-c'})$ for
some $c'>(1-4b'-7b)/8,$ we have $\rho_n=O(n^{-(1-4b'-7b)/8})$. Pick
$\nu=O(n^{-v})$ with
$$v=3((1-5b-b'')/2-s_1)\wedge(1-5b-b''-s_2)\wedge(b'-2b-s_3)/4+2b.$$
Then it is easy to verify that the terms $\nu^{1/3}(1\vee
\log(p/\nu))^{2/3}$ and $\E E_{A}/\nu+\E E_{B}/\nu$ are both of
order $O(n^{-c''})$ with $c''=s_b/4$. Finally by (\ref{eq:boot}), we
have
$$\sup_{\alpha\in(0,1)}\left|P(T_X\leq
c_{T_D}(\alpha))-\alpha\right|\lesssim n^{-c},\quad
c=\min\{s_b/4,(1-4b'-7b)/8\}.$$ The result under Condition 2 can be
proved in a similar manner.
\end{proof}

\begin{proof}[Proof of Theorem \ref{thm:dep-boot}]
Let $x_i^{(M)}$ be the $M$-dependent approximation sequence for
$x_i$. Define $A_{ij}^{(M)}$, $B_{ij}^{(M)}$, $E_A^{(M)}$ and
$E_B^{(M)}$ in a similar way as $A_{ij}$, $B_{ij}$, $E_A$ and $E_B$
by replacing $x_i$ with $x_i^{(M)}$. Notice that
\begin{align*}
&\E\max_{1\leq j,k\leq
p}\left|\frac{1}{r}\sum^{r}_{i=1}(A_{ij}^{(M)}A_{ik}^{(M)}-A_{ij}A_{ik})/N\right|
\\ \leq& \frac{1}{rN}\sum_{1\leq j,k\leq p}\E\left|\sum^{r}_{i=1}(A_{ij}^{(M)}A_{ik}^{(M)}-A_{ij}A_{ik}^{(M)}+A_{ij}A_{ik}^{(M)}-A_{ij}A_{ik})\right|
\\ \leq& \frac{1}{N}\sum_{1\leq j,k\leq p}\left(\E\left|A_{1j}^{(M)}A_{1k}^{(M)}-A_{1j}A_{1k}^{(M)}\right|+\E\left|A_{1j}A_{1k}^{(M)}-A_{1j}A_{1k}\right|\right)
\\ \leq& \frac{1}{N}\sum_{1\leq j,k\leq
p}\left\{\left(\E\left|A_{1j}^{(M)}-A_{1j}\right|^2\right)^{1/2}(\E|A_{1k}^{(M)}|^2)^{1/2}+\left(\E\left|A_{1k}^{(M)}-A_{1k}\right|^2\right)^{1/2}(\E|A_{1j}|^2)^{1/2}\right\}.
\end{align*}

By Lemma A.1 of \cite{ll2009}, we have
$(\E|A_{1j}^{(M)}-A_{1j}|^2)^{1/2}/\sqrt{N}\leq
C_q\Theta_{M,j,q}(x)$ for some $q\geq 2$. It follows that
\begin{align*}
&\E\max_{1\leq j,k\leq
p}\left|\frac{1}{r}\sum^{r}_{i=1}(A_{ij}^{(M)}A_{ik}^{(M)}-A_{ij}A_{ik})/N\right|\lesssim
p^2\rho^M.
\end{align*}
Similarly we have
$$\E\max_{1\leq j,k\leq p}\left|\frac{1}{r}\sum^{r}_{i=1}(B_{ij}^{(M)}B_{ik}^{(M)}-B_{ij}B_{ik})/M\right|\lesssim p^2\rho^M.$$
Using similar arguments in the proof of Theorem
\ref{thm:Gaussian-dep}, we have
\begin{align*}
\max_{1\leq j,k\leq p}\sum^{n-1}_{h=1-n}\left|\E
x_{ij}^{(M)}x_{(i+h)k}^{(M)}-\E x_{ij}x_{(i+h)k}\right|\lesssim
M\rho^M.
\end{align*}
Thus by (\ref{eq:boot}), we have
\begin{align*}
\sup_{\alpha\in(0,1)}\left|P(T_X\leq
c_{T_D}(\alpha))-\alpha\right|\lesssim &\rho_n+\nu^{1/3}(1\vee
\log(p/\nu))^{2/3}+\E E_{A}^{(M)}/\nu+\E
E_{B}^{(M)}/\nu+(p^2+M)\rho^M/\nu.
\end{align*}

Then by Lemma A.1 in \cite{cck13}, we have
\begin{align*}
&\E \max_{1\leq j,k\leq
p}\left|\frac{1}{Nr}\sum^{r}_{i=1}(A_{ij}^{(M)}A_{ik}^{(M)}-\E
A_{ij}^{(M)}A_{ik}^{(M)})\right|
\\ \lesssim &
\frac{1}{N}\max_{1\leq j\leq p}\{\E (A_{ij}^{(M)})^4\}^{1/2}
\sqrt{2\log p/r}+2\log p\sqrt{\E \max_{1\leq i\leq r}\max_{1\leq
j\leq p}|A_{ij}^{(M)}/\sqrt{N}|^4}/r
\\ \lesssim & \frac{1}{N}\max_{1\leq
j\leq p}\{\E (A_{ij})^4\}^{1/2} \sqrt{2\log p/r}+2\log p\sqrt{\E
\max_{1\leq i\leq r}\max_{1\leq j\leq p}|A_{ij}/\sqrt{N}|^4}/r
\\ &+\sqrt{2\log p/r}\max_{1\leq j\leq p}\Theta_{M,j,4}^2(x)
+2\log p\sqrt{rp\max_{1\leq j\leq p}\Theta_{M,j,4}^4(x)}/r,
\end{align*}
where the first two terms can be bounded using similar arguments in
the proof of Lemma \ref{lemma:m-dep-boot}, and the last two terms
decay exponentially. The same arguments apply to the terms
associated with $B_{ij}$. 

By Theorem \ref{thm:Gaussian-dep}, we have
$$\rho_n\lesssim
n^{-1/8}M^{1/2}l_n^{7/8}+\gamma+(n^{1/8}M^{-1/2}l_n^{-3/8})^{q/(1+q)}\left(\sum^{p}_{j=1}\Theta_{M,j,q}^q\right)^{1/(1+q)}.$$
The assumption that $\max_{1\leq j\leq p}\Theta_{M,j,q}=O(\rho^M)$
for $\rho<1$, and $M=O(n^{b'})$ with $b'>2b$ implies that
$\left(\sum^{p}_{j=1}\Theta_{M,j,q}^q\right)^{1/(1+q)}$ decays
exponentially. The rest of the proof is similar to those in the
proof of Theorem \ref{lemma:m-dep-boot}. \end{proof}

\begin{proof}[Proof of Theorem \ref{thm:block-boot}]
Our arguments below apply to $M$-dependent time series, and can be
easily extended to weakly dependent time series by employing the
$M$-approximation techniques (that incurs only an asymptotically
ignorable error). 

Let $c$, $c^*$, and $C$ be some generic constants
which can be different from line to line. Define
$$\breve{T}_{\widetilde{X}}=\max_{1\leq j\leq p}\frac{1}{\sqrt{n}}\sum^{l_n}_{i=1}(\mathcal{A}_{ij}-\bar{\mathcal{A}}_j)e_i.$$
Following the arguments in the proof of Lemma
\ref{lemma:m-dep-boot}, we have
\begin{align*}
\E\max_{1\leq j\leq
p}\left|\frac{1}{l_n}\sum^{l_n}_{i=1}(\mathcal{A}_{ij}/\sqrt{b_n})^2-\sigma_{j,j}^{(b_n)}\right|\lesssim&
\bar{\sigma}_{x,b_n}\sqrt{\log p/l_n}+\log
p\{\log(l_np)\}^2\zeta_{x,h,b_n}^2/l_n\leq Cn^{-c}.
\end{align*}
Similarly we can show that
\begin{align*}
\E\max_{1\leq j\leq
p}\left|\frac{1}{l_n}\sum^{l_n}_{i=1}\mathcal{A}_{ij}/\sqrt{b_n}\right|\lesssim
\max_{1\leq j\leq p}\sigma_{j}\sqrt{\log p/l_n}+\log
p\{\log(l_np)\}\zeta_{x,h,b_n}/l_n\leq Cn^{-c}
\end{align*}
where we have used the fact that
\begin{equation}\label{eq:block-boot}
\E\max_{1\leq i\leq l_n}\max_{1\leq j\leq
p}|\mathcal{A}_{ij}/\sqrt{b_n}|\lesssim \log(l_np)\zeta_{x,h,b_n}.
\end{equation}

By Markov's inequality, we have with probability $1-Cn^{-c}$,
$$\left|\frac{1}{n}\sum^{l_n}_{i=1}(\mathcal{A}_{ij}-\bar{\mathcal{A}}_j)^2-\sigma_{j,j}^{(b_n)}\right|\leq (c_1/2)\wedge c_2,$$
uniformly for $1\leq j\leq p.$ It implies that with probability
$1-Cn^{-c}$, $c_1/2\leq
\frac{1}{n}\sum^{l_n}_{i=1}(\mathcal{A}_{ij}-\bar{\mathcal{A}}_j)^2\leq
2c_2$. By (\ref{eq:block-boot}), we have with probability with
$1-Cn^{-c}$, $\max_{1\leq i\leq l_n}\max_{1\leq j\leq
p}|\mathcal{A}_{ij}/\sqrt{b_n}|\leq
n^{c^*}\log(l_np)\zeta_{x,h,b_n}$ for some small $c^*>0.$ Because
$\zeta_{x,h,b_n}^2\{\log(pl_n)\}^9/l_n\lesssim n^{-c_0'}$, we can
apply Corollary 2.1 in \cite{cck13} to conclude that with
probability $1-Cn^{-c}$,
\begin{align}\label{eq:block1}
\sup_{t\in\mathbb{R}}|P(T_{X^*}\leq
t|\{x_i\}^{n}_{i=1})-P(\breve{T}_{\widetilde{X}}\leq
t|\{x_i\}^{n}_{i=1})|\lesssim n^{-c'},\quad c'>0.
\end{align}

Next, notice that
\begin{align*}
|\breve{T}_{\widetilde{X}}-T_{\widetilde{X}}|\leq \max_{1\leq j\leq
p}|\bar{\mathcal{A}}_j/\sqrt{b_n}|\left|\frac{1}{\sqrt{l_n}}\sum^{l_n}_{i=1}e_i\right|.
\end{align*}
With probability $1-Cn^{-c}$, we have
$|\breve{T}_{\widetilde{X}}-T_{\widetilde{X}}|\leq n^{c^*}\sqrt{\log
p/l_n}\left|\frac{1}{\sqrt{l_n}}\sum^{l_n}_{i=1}e_i\right|$. Using
the tail property of standard normal distribution, we can choose
$\zeta=n^{2c^*}\sqrt{\log p/l_n}$ such that with probability
$1-o(1)$,
\begin{align*}
P(|\breve{T}_{\widetilde{X}}-T_{\widetilde{X}}|>\zeta|\{x_i\}^{n}_{i=1})\lesssim
n^{-c'},
\end{align*}
and $\sqrt{\log p}\zeta\lesssim n^{-c'}$ for some properly chosen
$c^*$ and $c'$. Therefore by Lemma 2.1 in \cite{cck13}, we obtain
that with probability $1-Cn^{-c}$,
\begin{align}\label{eq:block2}
\sup_{t\in\mathbb{R}}|P(T_{\widetilde{X}}\leq
t|\{x_i\}^{n}_{i=1})-P(\breve{T}_{\widetilde{X}}\leq
t|\{x_i\}^{n}_{i=1})|\lesssim n^{-c'}.
\end{align}
By (\ref{eq:block1}) and (\ref{eq:block2}), (\ref{eq:equi}) holds
with probability $1-Cn^{-c}$. The second part of the theorem follows
from Theorem \ref{thm:m-dep-boot} and Theorem \ref{thm:dep-boot}.
\end{proof}

\subsection{Proofs of the main results in Section \ref{sec:ts}}
\begin{proof}[Proof of Theorem \ref{thm:app-1}]
Define $T_{D}=\max_{1\leq j\leq
2q_0}\frac{1}{\sqrt{n}}\sum^{r_1}_{i=1}D_{ij},$ where
$D_{ij}=A_{ij}e_i+B_{ij}\widetilde{e}_i$ and
\begin{equation*}
A_{ij}=\sum^{iN_1+(i-1)M_1}_{l=(i-1)(N_1+M_1)+1}x_{lj},\quad
B_{ij}=\sum^{i(N_1+M_1)}_{l=iN_1+(i-1)M_1+1}x_{lj},\quad 1\leq i\leq
r_1,1\leq j\leq 2q_0.
\end{equation*}
Since $\max_{1\leq j\leq
q_0}\left|\sum^{N_0}_{i=1}IF(v_i,F_{d_0})\right|/\sqrt{N_0}=\max_{1\leq
j\leq 2q_0}\sum^{N_0}_{i=1}x_i/\sqrt{N_0},$ we have
$$\left|\max_{1\leq j\leq
q_0}\sqrt{N_0}|\widehat{\theta}_{j}-\widetilde{\theta}_j|-\max_{1\leq
j\leq 2q_0}\sum^{N_0}_{i=1}x_i/\sqrt{N_0}\right|\leq \max_{1\leq
j\leq q_0}\sqrt{N_0}|\mathcal{R}_{jN_0}|.$$ Let
$\zeta_1=Cn^{-c}/\sqrt{\log(2q_0)}$ and $\zeta_2=Cn^{-c}$ for some
large enough $C$ and small enough $c$ (e.g. $c<c_1$) such that
$$P(\max_{1\leq j\leq q_0}\sqrt{N_0}|\mathcal{R}_{jN_0}|>\zeta_1)<\zeta_2.$$
We show that
$P(P(|T_D-T_{\widehat{D}}|>\zeta_1|\{x_i\}^{n}_{i=1})>\zeta_2)\leq
\zeta_2.$ Because $|T_D-T_{\widehat{D}}|\leq \max_{1\leq j\leq
2q_0}\left|\frac{1}{\sqrt{n}}\sum^{r}_{i=1}(D_{ij}-\widehat{D}_{ij})\right|$
and
$$\frac{1}{\sqrt{n}}\sum^{r}_{i=1}(D_{ij}-\widehat{D}_{ij})\sim
N\left(0,\frac{1}{n}\sum^{r}_{i=1}\{(A_{ij}-\widehat{A}_{ij})^2+(B_{ij}-\widehat{B}_{ij})^2\}\right)$$
conditional on $\{x_i\}^{2q_0}_{i=1}$, we have
$\E[|T_D-T_{\widehat{D}}||\{x_i\}^{n}_{i=1}]\leq
C'\sqrt{\mathcal{E}_{AB}\log(2q_0)}$ for some large enough constant
$C'.$ It thus implies that
\begin{align*}
P(P(|T_D-T_{\widehat{D}}|>\zeta_1|\{x_i\}^{n}_{i=1})>\zeta_2)\leq&
P(\E[|T_D-T_{\widehat{D}}||\{x_i\}^{n}_{i=1}]>\zeta_1\zeta_2)
\\ \leq & P(\mathcal{E}_{AB}\{C'\log(2q_0)\}^2>C^4n^{-4c})\leq
Cn^{-c},
\end{align*}
for large enough $C$ and sufficiently small $c$ (e.g. $c<c_2/4$). By
Theorem \ref{thm:dep-boot}, and Lemma 3.3 and the arguments in the
proof of Theorem 3.2 in \cite{cck13}, we derive that under $H_0,$
\begin{align*}
\sup_{\alpha\in (0,1)}|P(\max_{1\leq j\leq
q_0}\sqrt{N_0}|\widehat{\theta}_{j}-\widetilde{\theta}_j|>c_{1}(\alpha))-\alpha|\lesssim&n^{-\widetilde{c}}+\zeta_1\sqrt{1\vee
\log(2q_0/\zeta_1)}+\zeta_2\lesssim n^{-c''},
\end{align*}
for $c''>0$, where $\widetilde{c}=c$ or $c'$, which are defined in
Theorem \ref{thm:dep-boot}.
\end{proof}

\begin{proof}[Proof of Theorem \ref{thm:app-2}]
Note that
$$\sqrt{N_0}(\widehat{\Theta}^*-\widehat{\Theta})=\frac{1}{\sqrt{N_0}}\sum^{N_0}_{i=1}\left\{IF(v_i^*,F_{d_0})-\frac{1}{N_0}\sum^{N_0}_{i=1}IF(v_i,F_{d_0})\right\}+\sqrt{N_0}(\mathcal{R}_{N_0}^*-\mathcal{R}_{N_0}),$$
which implies that
$$J\equiv\left|\max_{1\leq j\leq q_0}\sqrt{N_0}|\widehat{\theta}^*_j-\widehat{\theta}_j|-\max_{1\leq j\leq 2q_0}\frac{1}{\sqrt{N_0}}\sum^{N_0}_{i=1}(x_{ij}^*-\bar{x}_j)\right|
\leq \max_{1\leq j\leq
q_0}\sqrt{N_0}|\mathcal{R}_{jN_0}^*-\mathcal{R}_{jN_0}|,$$ where
$\bar{x}_j=\sum^{N_0}_{i=1}x_{ij}/N_0.$

Denote by
$\widetilde{c}_{2}(\alpha)$ the $(1-\alpha)$ quantile of the
distribution of $\max_{1\leq j\leq
2q_0}\{\sum^{N_0}_{i=1}(x_{ij}^*-\bar{x}_j)/\sqrt{N_0}\}$
conditional on the sample $\{u_{i}\}$. Let
$\zeta_1=Cn^{-c}/\sqrt{\log(2q_0)}$ and $\zeta_2=Cn^{-c}$ for
$C>C_i$ and $c<c_i$ with $i=1,2,3,4$. Assumption \ref{assum:app-2}
and Lemma 3.3 of \cite{cck13} imply that
$$P\left(P\left(J>\zeta_1|\{u_i\}^{n}_{i=1}\right)>\zeta_2\right)\leq \zeta_2,$$
and thus
\begin{align*}
&P(\widetilde{c}_{2}(\alpha)\leq c_{2}(\alpha+\zeta_2)+\zeta_1)\geq 1-\zeta_2, \\
&P(c_{2}(\alpha)\leq \widetilde{c}_{2}(\alpha+\zeta_2)+\zeta_1)\geq
1-\zeta_2.
\end{align*}
Then on the event $\{c_{2}(\alpha)\leq \widetilde{c}_{2}(\alpha+\zeta_2)+\zeta_1\}
\cup\{\widetilde{c}_{2}(\alpha-\zeta_2)\leq c_{2}(\alpha)+\zeta_1\}\cup\{\max_{1\leq j\leq q_0}\sqrt{N_0}|\mathcal{R}_{jN_0}|\leq \zeta_1\}$,
we have
\begin{align*}
&\left|P\left(\max_{1\leq j\leq
2q_0}\sum^{N_0}_{i=1}x_{ij}/\sqrt{N_0}\leq
\widetilde{c}_{2}(\alpha)\right)-P\left(\max_{1\leq j\leq
q_0}\sqrt{N_0}|\widehat{\theta}_j-\widetilde{\theta}_j|\leq
c_2(\alpha )\right)\right|
\\ \leq & P\left(\widetilde{c}_2(\alpha-\zeta_2)-2\zeta_1\leq \max_{1\leq j\leq
2q_0}\sum^{N_0}_{i=1}x_{ij}/\sqrt{N_0}\leq
\widetilde{c}_2(\alpha)\right)
\\ &+ P\left(\widetilde{c}_2(\alpha)\leq \max_{1\leq j\leq
2q_0}\sum^{N_0}_{i=1}x_{ij}/\sqrt{N_0}\leq
\widetilde{c}_{2}(\alpha+\zeta_2)+2\zeta_1\right).
\end{align*}
The conclusion follows from similar arguments in the proof of
Theorem \ref{thm:block-boot}.
\end{proof}

\subsection{Technical details for Section~\ref{sec:bandtest}}\label{subsec:band}
To justify the validity of the procedure in
Section~\ref{sec:bandtest}, we impose the following assumptions
which are parallel to those in Assumption \ref{assum:app-1}.
\begin{assumption}\label{assum:band}
Assume that under $H_0$,
$$P\left(\max_{|j-k|\geq \iota}\left|\frac{1}{\sqrt{n}}\sum^{n}_{i=1}\frac{\sqrt{\gamma_{u,jj}(0)\gamma_{u,kk}(0)}-\sqrt{\widehat{\gamma}_{u,jj}(0)\widehat{\gamma}_{u,kk}(0)}}
{\sqrt{\gamma_{u,jj}(0)\gamma_{u,kk}(0)\widehat{\gamma}_{u,jj}(0)\widehat{\gamma}_{u,kk}(0)}}
u_{ij}u_{ik}\right|>C_1n^{-c_1}/\sqrt{\log(p)}\right)<C_1n^{-c_1}$$
and $P(\mathcal{E}_{AB}\{\log(p)\}^2>C_2n^{-c_2})\leq C_2n^{-c_2}$,
where $c_1,C_1,c_2,C_2>0$, and
\begin{align*}
\mathcal{E}_{AB}=\max_{|j-k|\geq
\iota}\left|\frac{1}{n}\sum^{r}_{i=1}\{(\widetilde{A}_{i,jk}-\widehat{A}_{i,jk})^2+(\widetilde{B}_{i,jk}-\widehat{B}_{i,jk})^2\}\right|,
\end{align*}
with
$\widetilde{A}_{i,jk}=\sum^{iN+(i-1)M}_{l=iN+(i-1)M-N+1}\widetilde{u}_{lj}\widetilde{u}_{lk}$
and
$\widetilde{B}_{i,jk}=\sum^{i(N+M)}_{l=i(N+M)-M+1}\widetilde{u}_{lj}\widetilde{u}_{lk}$.
\end{assumption}

Below we provide some primitive conditions under which Assumption
\ref{assum:band} holds. To this end, we consider a $M$-dependent
stationary sequence $\{x_i\}$, where $M$ is allowed to grow with the
sample size.
\begin{lemma}\label{lemma:band}
Assumption \ref{assum:band} holds under the following conditions,
\begin{align*}
&c_0<\min_j\gamma_{u,jj}(0)\leq \max_j\gamma_{u,jj}(0)<C_0,\quad c_0,C_0>0,\\
&\max_{1\leq j,k\leq p}\{\E (A_{1,jk}/\sqrt{N})^2\}^{1/2}\vee \{\E
(B_{1,jk}/\sqrt{M})^2\}^{1/2}\lesssim n^{s_1},\\
&\max_{1\leq j,k\leq p}||A_{1,jk}/\sqrt{N}||_h\vee
||B_{1,jk}/\sqrt{M}||_h\lesssim n^{s_2}, \\
&n^{s_1}\sqrt{\log(p)/(rM)}+n^{s_2}\log(rp)\log(p)/(r\sqrt{M})\lesssim n^{-c},\\
&Nn^{-c}(\log p)^2\lesssim n^{-c'},\quad \frac{\sqrt{n\log
p}}{n^{3c/2}}\lesssim n^{-2c''},
\\&n^{-c}\sqrt{N}(\log p)^2\log(rp)n^{s_1}\lesssim n^{-c'''}.
\end{align*}
\end{lemma}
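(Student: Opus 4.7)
The plan is to reduce Assumption \ref{assum:band} to a single ingredient: a uniform consistency bound for sample (co)variances
$$\max_{1\leq j,k\leq p}|\widehat\gamma_{u,jk}(0)-\gamma_{u,jk}(0)|\lesssim n^{-c}$$
with probability at least $1-Cn^{-c}$. This ingredient will follow by applying Lemma A.1 of \cite{cck13} to the two i.i.d. block-sum sequences $\{A_{i,jk}\}_{i=1}^r$ and $\{B_{i,jk}\}_{i=1}^r$, which are i.i.d. under $M$-dependence because consecutive $A$-blocks are separated by an intervening $B$-block. Using the given moment bound $n^{s_1}$ and Orlicz bound $n^{s_2}$ together with the maximal inequality in (\ref{eq:block3}), one obtains
$$\E\max_{j,k}|\widehat\gamma_{u,jk}(0)-\gamma_{u,jk}(0)|\lesssim n^{s_1}\sqrt{\log p/(rM)}+n^{s_2}\log(rp)\log(p)/(r\sqrt{M}),$$
which matches the fourth hypothesis exactly. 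A Markov step then converts this into the required tail bound.

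Next I would transfer this rate to the normalizing denominators. Since $c_0<\min_j\gamma_{u,jj}(0)$, on the above high-probability event the estimated variances $\widehat\gamma_{u,jj}(0)$ are bounded away from $0$ and $\infty$; a Taylor expansion of $x\mapsto 1/\sqrt{x}$ then gives
$$\max_{j,k}|\Delta_{jk}|\lesssim n^{-c},\qquad \Delta_{jk}:=\frac{1}{\sqrt{\widehat\gamma_{u,jj}(0)\widehat\gamma_{u,kk}(0)}}-\frac{1}{\sqrt{\gamma_{u,jj}(0)\gamma_{u,kk}(0)}}.$$
For the first half of Assumption \ref{assum:band} I will factor the statistic as $\Delta_{jk}\cdot\sqrt{n}\,\widehat\gamma_{u,jk}(0)$. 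Under $H_0$ with $|j-k|\geq\iota$ we have $\gamma_{u,jk}(0)=0$, and a direct application of the same maximal inequality (now bounding the block-sum maximum rather than its centered version) gives $\max_{|j-k|\geq\iota}|\sqrt n\,\widehat\gamma_{u,jk}(0)|\lesssim\sqrt{N}(\log p)^2\log(rp)\,n^{s_1}$ on a further high-probability event. Multiplying the two factors, the sixth condition $n^{-c}\sqrt{N}(\log p)^2\log(rp)n^{s_1}\lesssim n^{-c'''}$ yields the first probability statement of Assumption \ref{assum:band}, where the small additional $\sqrt{\log p}$ denominator is absorbed by a slightly smaller exponent (which is where the fifth condition $\sqrt{n\log p}/n^{3c/2}\lesssim n^{-2c''}$ enters as a cruder back-up bound using only $|\sqrt n\,\widehat\gamma_{u,jk}(0)|\leq\sqrt n\,n^{-c}$).

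For the second half concerning $\mathcal{E}_{AB}$, the decomposition
$$\widetilde A_{i,jk}-\widehat A_{i,jk}=\Delta_{jk}\sum_{l=(i-1)(N+M)+1}^{iN+(i-1)M}u_{lj}u_{lk}+\frac{N\widehat\gamma_{u,jk}(0)}{\sqrt{\widehat\gamma_{u,jj}(0)\widehat\gamma_{u,kk}(0)}}$$
reduces the problem to two pieces. The first, after squaring and averaging in $i$, is $\Delta_{jk}^{2}(1/n)\sum_i(A_{i,jk}^{\mathrm{raw}})^{2}$, where $A_{i,jk}^{\mathrm{raw}}=\sum_{l}u_{lj}u_{lk}$ over the $i$-th $A$-block; controlling $(1/n)\sum_i(A_{i,jk}^{\mathrm{raw}})^{2}$ uniformly in $(j,k)$ by its second moment (of order $Nn^{2s_1}$, so $(1/n)\sum_i$ is $O(n^{2s_1})$) yields a contribution of order $n^{-2c+2s_1}$. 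The second piece, using $rN^{2}/n\leq N$ together with $|\widehat\gamma_{u,jk}(0)|\leq n^{-c}$ for $|j-k|\geq\iota$ under $H_0$, contributes at most $Nn^{-2c}$. The fifth condition $Nn^{-c}(\log p)^{2}\lesssim n^{-c'}$ then closes the argument, and the analogous computation for $B$-blocks completes the proof.

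The main obstacle will be the uniform bound on $(1/n)\sum_i(A_{i,jk}^{\mathrm{raw}})^{2}$: since this is a maximum over $p^{2}$ sums of squares of i.i.d. heavy-tailed variables, a second maximal inequality (essentially at the fourth-moment level, mirroring the derivation of $\E E_A$ in the proof of Lemma \ref{lemma:m-dep-boot}) is required, and its error term must be controlled by the Orlicz-norm hypothesis. Everything else is careful bookkeeping of the exponents $c,c',c'',c''',s_1,s_2$ to confirm they are exactly what the six listed conditions supply.
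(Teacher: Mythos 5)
Your first half follows the paper's own route: uniform consistency $\max_{j,k}|\widehat\gamma_{u,jk}(0)-\gamma_{u,jk}(0)|\lesssim n^{-c/2}$ with probability $1-Cn^{-c/2}$ via Lemma A.1 of \cite{cck13} applied to the i.i.d.\ block sums, the Lipschitz transfer to the normalizing denominators, and then Markov's inequality on $\E\max_{|j-k|\geq\iota}|n^{-1/2}\sum_{i}u_{ij}u_{ik}|\lesssim \sqrt{n}\,n^{-c}$ (centered under $H_0$) against the threshold $\jmath'=Cn^{c/2-c_1}/\sqrt{\log p}$. Note, though, that you have the roles of the last two exponent conditions swapped: the condition $\sqrt{n\log p}/n^{3c/2}\lesssim n^{-2c''}$ is the \emph{primary} tool for the first probability statement of Assumption \ref{assum:band} (it is exactly $\sqrt{n}\,n^{-c}/\jmath'$ up to the $n^{c_1}$ shift), not a back-up, while the condition $n^{-c}\sqrt{N}(\log p)^2\log(rp)n^{s_1}\lesssim n^{-c'''}$ belongs entirely to the $\mathcal{E}_{AB}$ part.

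The genuine gap is in your treatment of $\mathcal{E}_{AB}$. The decomposition is the paper's, but your plan to control $\max_{|j-k|\geq\iota}\frac{1}{n}\sum_i (A^{\mathrm{raw}}_{i,jk})^2$ by ``its second moment of order $n^{2s_1}$'' plus ``a second maximal inequality at the fourth-moment level'' cannot be closed from the stated hypotheses: a maximal inequality for the maximum over $p^2$ pairs of $\frac{1}{r}\sum_i(A_{i,jk}/\sqrt{N})^2$ requires control of $\max_{j,k}\E(A_{1,jk}/\sqrt{N})^4$ and of $\E\max_{i,j,k}(A_{i,jk}/\sqrt{N})^4$, i.e.\ fourth-moment/cumulant-type quantities that the lemma only provides implicitly through the Orlicz norm $n^{s_2}$, and the resulting deviation term (of order $n^{2s_2}\sqrt{\log p/r}$, which the listed conditions bound only by $n^{-2c}r^{3/2}M\sqrt{\log p}$ up to logarithms) is not forced to be small. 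The paper sidesteps this entirely with a truncation-plus-Cauchy--Schwarz device: on the event $\max_i\max_{j,k}|A_{i,jk}/\sqrt{N}|\leq a$ (failure probability $\lesssim n^{s_1}\log(rp)/a$ by Markov), one writes $A_{i,jk}^2\leq\sqrt{N}\,a\,|A_{i,jk}|$, then $\frac{1}{n}\sum_{l=1}^{n}|u_{lj}u_{lk}|\leq\sqrt{\widehat\gamma_{u,jj}(0)\widehat\gamma_{u,kk}(0)}$ by Cauchy--Schwarz, giving $\frac{1}{n}\sum_i A_{i,jk}^2/(\widehat\gamma_{u,jj}(0)\widehat\gamma_{u,kk}(0))\lesssim\sqrt{N}a$ uniformly in $(j,k)$ with no further concentration step. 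With $a=n^{s_1}\log(rp)n^{c'''/2}$ the sixth condition is then exactly the closing condition; the multiplicative factor $\sqrt{N}\log(rp)n^{s_1}$ appearing there has no counterpart in your argument, which is the tell-tale sign that your route is not the one the hypotheses were calibrated for. Replace the fourth-moment step with this truncation argument and the proof goes through.
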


\begin{proof}[Proof of Lemma \ref{lemma:band}]
Define the block sums
$A_{i,jk}=\sum^{iN+(i-1)M}_{l=iN+(i-1)M-N+1}u_{lj}u_{lk}$ and
$B_{i,jk}=\sum^{i(N+M)}_{l=i(N+M)-M+1}u_{lj}u_{lk}$. Note that
\begin{align*}
& P(\max_{1\leq j,k\leq
p}|\gamma_{u,jk}(0)-\widehat{\gamma}_{u,jk}(0)|>\jmath)\leq
\E\max_{1\leq j,k\leq
p}|\gamma_{u,jk}(0)-\widehat{\gamma}_{u,jk}(0)|/\jmath
\\ \leq& \frac{1}{\jmath}\E\max_{1\leq j,k\leq p}\left|\sum^{r}_{i=1}(A_{i,jk}-\E A_{i,jk})/(Nr)\right|+\frac{1}{\jmath}\E\max_{1\leq j,k\leq p}\left|\sum^{r}_{i=1}(B_{i,jk}-\E
B_{i,jk})/(Mr)\right|.
\end{align*}
By Lemma A.1 in \cite{cck13} and the assumptions,
\begin{align*}
&\E\max_{1\leq j,k\leq p}\left|\sum^{r}_{i=1}(A_{i,jk}-\E
A_{i,jk})/(Nr)\right|
\\ \lesssim& \max_{1\leq j,k\leq p}\{\E (A_{1,jk}/N)^2\}^{1/2}\sqrt{\log(p)/r}+
\sqrt{\E\max_{1\leq i\leq r}\max_{1\leq j,k\leq
p}|A_{i,jk}/N|^2}\log(p)/r
\\ \lesssim& \max_{1\leq j,k\leq p}\{\E (A_{1,jk}/\sqrt{N})^2\}^{1/2}\sqrt{\log(p)/(rN)}+
\max_{1\leq j,k\leq
p}||A_{1,jk}/\sqrt{N}||_h\log(rp)\log(p)/(r\sqrt{N})
\\ \lesssim&
n^{s_1}\sqrt{\log(p)/(rN)}+n^{s_2}\log(rp)\log(p)/(r\sqrt{N})\lesssim n^{-c}.
\end{align*}
With $\jmath=n^{-c/2}$, we have
$$P\left(\max_{1\leq j,k\leq p}|\gamma_{u,jk}(0)-\widehat{\gamma}_{u,jk}(0)|>n^{-c/2}\right)\lesssim n^{-c/2}.$$

On the event $\max_{1\leq j,k\leq
p}|\gamma_{u,jk}(0)-\widehat{\gamma}_{u,jk}(0)|\leq n^{-c/2}$, we
have $c_0/2\leq \widehat{\gamma}_{u,jj}(0)\leq 2C_0$ uniformly for
$1\leq j\leq p$ and some $c_0,C_0>0.$ Hence we get
\begin{align*}
&\frac{\sqrt{\gamma_{u,jj}(0)\gamma_{u,kk}(0)}-\sqrt{\widehat{\gamma}_{u,jj}(0)\widehat{\gamma}_{u,kk}(0)}}
{\sqrt{\gamma_{u,jj}(0)\gamma_{u,kk}(0)\widehat{\gamma}_{u,jj}(0)\widehat{\gamma}_{u,kk}(0)}}\lesssim
\sqrt{\gamma_{u,jj}(0)\gamma_{u,kk}(0)}-\sqrt{\widehat{\gamma}_{u,jj}(0)\widehat{\gamma}_{u,kk}(0)}
\\ \lesssim& \max_{1\leq j\leq
p}\frac{|\gamma_{u,jj}(0)-\widehat{\gamma}_{u,jj}(0)|}{|\sqrt{\gamma_{u,jj}(0)}+\sqrt{\widehat{\gamma}_{u,jj}(0)}|}
\lesssim \max_{1\leq j\leq
p}|\gamma_{u,jj}(0)-\widehat{\gamma}_{u,jj}(0)|.
\end{align*}
On the other hand, using similar arguments above, we have
\begin{align*}
&I=P\left(\max_{|j-k|\geq
\iota}\left|\frac{1}{\sqrt{n}}\sum^{n}_{i=1}\frac{\sqrt{\gamma_{u,jj}(0)\gamma_{u,kk}(0)}-\sqrt{\widehat{\gamma}_{u,jj}(0)\widehat{\gamma}_{u,kk}(0)}}
{\sqrt{\gamma_{u,jj}(0)\gamma_{u,kk}(0)\widehat{\gamma}_{u,jj}(0)\widehat{\gamma}_{u,kk}(0)}}
u_{ij}u_{ik}\right|>C_1n^{-c_1}/\sqrt{\log(p)}\right)
\\ \lesssim &P\left(\max_{|j-k|\geq \iota}\left|\frac{1}{\sqrt{n}}\sum^{n}_{i=1}
u_{ij}u_{ik}\right|>\jmath'\right)+n^{-c/2}\leq
\frac{1}{\jmath'}\E\max_{|j-k|\geq
\iota}\left|\frac{1}{\sqrt{n}}\sum^{n}_{i=1}
u_{ij}u_{ik}\right|+n^{-c/2}
\\ \leq& \frac{\sqrt{n}}{\jmath'}\E\max_{|j-k|\geq
\iota}\left|\sum^{r}_{i=1}A_{i,jk}/(Nr)\right|+\frac{\sqrt{n}}{\jmath'}\E\max_{|j-k|\geq
\iota}\left|\sum^{r}_{i=1}B_{i,jk}/(Mr)\right|+n^{-c/2},
\end{align*}
where $\jmath'=Cn^{c/2-c_1}/\sqrt{\log p}.$ 

Again by Lemma A.1 in
\cite{cck13},
\begin{align*}
\E\max_{|j-k|>\iota}\left|\sum^{r}_{i=1}A_{i,jk}/(Nr)\right|
\lesssim& \max_{|j-k|\geq \iota}\{\E
(A_{1,jk}/N)^2\}^{1/2}\sqrt{\log(p)/r}
\\&+\sqrt{\E\max_{1\leq i\leq
r}\max_{|j-k|\geq \iota}|A_{i,jk}/N|^2}\log(p)/r
\\ \lesssim&
n^{s_1}\sqrt{\log(p)/(rN)}+n^{s_2}\log(rp)\log(p)/(r\sqrt{N})\lesssim n^{-c},
\end{align*}
which implies that
\begin{align*}
I\lesssim P\left(\max_{|j-k|\geq
\iota}\left|\frac{1}{\sqrt{n}}\sum^{n}_{i=1}
u_{ij}u_{ik}\right|>\jmath'\right)+n^{-c/2}\lesssim
\frac{\sqrt{n\log p}}{n^{3c/2-c_1}}+n^{-c/2}\lesssim n^{-c_1},
\end{align*}
for properly chosen $c_1.$ Next we show that
$P(\mathcal{E}_{AB}\{\log(p)\}^2>C_2n^{-c_2})\leq C_2n^{-c_2}$. 

Let
$$\mathcal{E}_A=\max_{|j-k|\geq
\iota}\left|\frac{1}{n}\sum^{r}_{i=1}(\widetilde{A}_{i,jk}-\widehat{A}_{i,jk})^2\right|,
\quad \mathcal{E}_B=\max_{|j-k|\geq
\iota}\left|\frac{1}{n}\sum^{r}_{i=1}(\widetilde{B}_{i,jk}-\widehat{B}_{i,jk})^2\right|.$$
We shall show that
$$P(\mathcal{E}_{A}\{\log(p)\}^2>C_2n^{-c_2})\leq
C_2n^{-c_2}.$$ Similar arguments apply to $\mathcal{E}_B.$ Note that
\begin{align*}
P\left(\max_{1\leq i\leq
r}\max_{|j-k|>\iota}|A_{i,jk}/\sqrt{N}|>a\right)\leq & \frac{1}{a}\E
\max_{1\leq i\leq r}\max_{|j-k|>\iota}|A_{i,jk}/\sqrt{N}|\leq
\frac{n^{s_1}\log(rp)}{a},
\end{align*}
where the value of $a$ will be determined later. On the events
$\max_{1\leq i\leq r}\max_{|j-k|>\iota}|A_{i,jk}/\sqrt{N}|<a$ and
$\max_{1\leq j,k\leq
p}|\gamma_{u,jk}(0)-\widehat{\gamma}_{u,jk}(0)|\leq n^{-c/2}$, we
have
\begin{align*}
\frac{1}{n}\sum^{r}_{i=1}\frac{A_{i,jk}^2}{\widehat{\gamma}_{u,jj}(0)\widehat{\gamma}_{u,kk}(0)}
 \leq & \frac{1}{n}\sum^{r}_{i=1}\frac{\sqrt{N}a|A_{i,jk}|}{\widehat{\gamma}_{u,jj}(0)\widehat{\gamma}_{u,kk}(0)}
\\ \leq&  \frac{1}{n}\sum^{n}_{i=1}\frac{\sqrt{N}a|u_{i,j}u_{i,k}|}{\widehat{\gamma}_{u,jj}(0)\widehat{\gamma}_{u,kk}(0)}\leq
 \frac{\sqrt{N}a}{\sqrt{\widehat{\gamma}_{u,jj}(0)\widehat{\gamma}_{u,kk}(0)}}\lesssim \sqrt{N}a.
\end{align*}
We also note that
\begin{align*}
\mathcal{E}_{A}\lesssim &
\max_{|j-k|>\iota}\frac{1}{n}\sum^{r}_{i=1}\left\{\frac{A_{i,jk}(\sqrt{\gamma_{u,jj}(0)\gamma_{u,kk}(0)}-\sqrt{\widehat{\gamma}_{u,jj}(0)\widehat{\gamma}_{u,kk}(0)}
)}{\sqrt{\gamma_{u,jj}(0)\gamma_{u,kk}(0)\widehat{\gamma}_{u,jj}(0)\widehat{\gamma}_{u,kk}(0)}}\right\}^2+N^2r\max_{|j-k|>\iota}|\widehat{\gamma}_{jk}|^2/n
\\ \lesssim & n^{-c} \max_{|j-k|>\iota}\frac{1}{n}\sum^{r}_{i=1}\frac{A_{i,jk}^2}{\widehat{\gamma}_{u,jj}(0)\widehat{\gamma}_{u,kk}(0)}+Nn^{-c} \lesssim n^{-c}\sqrt{N}a+Nn^{-c}.
\end{align*}
The conclusion therefore follows provided that $a=n^{s_1}\log(rp)n^{c'''/2},$
$Nn^{-c}(\log p)^2\lesssim n^{-c'}$ and $n^{-c}\sqrt{N}(\log
p)^2\log(rp)n^{s_1}\lesssim n^{-c'''}$ for some $c',c'''>0.$
\end{proof}

\subsection{General functions on vector sum}\label{subsec:extension}
In this section, we extend the results in Section
\ref{subsec:dep-graph} to general smooth functions
$\mathcal{L}:\mathbb{R}^p\rightarrow \mathbb{R}$ on the
high-dimensional vector sum. We impose the following
assumption~\ref{assum:general-g} regarding the smoothness of
$\mathcal{L}$. Write $\partial_{j} \mathcal{L}(x)=\partial
\mathcal{L}(x)/\partial x_j$, $\partial_{jk}
\mathcal{L}(x)=\partial^2 \mathcal{L}(x)/\partial x_j\partial x_k$
and $\partial_{jkl} \mathcal{L}(x)=\partial^3
\mathcal{L}(x)/\partial x_j\partial x_k\partial x_l$ for
$j,k,l=1,2,\dots,p$, where $x=(x_1,x_2,\dots,x_p)'$.
\begin{assumption}\label{assum:general-g}
Suppose that
\begin{equation}
\sum^{p}_{j=1}|\partial_{j} \mathcal{L}(x)|\lesssim L_1(p),\quad
\sum^{p}_{j,k=1}|\partial_{jk} \mathcal{L}(x)|\lesssim L_2(p),\quad
\sum^{p}_{j,k,l=1}|\partial_{jkl} \mathcal{L}(x)|\lesssim L_3(p),
\end{equation}
where the constants $L_1(p)$, $L_2(p)$ and $L_3(p)$ do not depend on
$x$. Further assume that for any
$\omega=(\omega_1,\dots,\omega_p)'\in\mathbb{R}^p$ with $\max_{1\leq
j\leq p}|\omega_j|\in \mathcal{B}_p$ for some set
$\mathcal{B}_p\subset \mathbb{R}$,
\begin{align*}
&\partial_{j}\mathcal{L}(x)\lesssim
\partial_{j}\mathcal{L}(x+\omega) \lesssim
\partial_{j}\mathcal{L}(x),
\\&\partial_{jk}\mathcal{L}(x)\lesssim \partial_{jk}\mathcal{L}(x+\omega) \lesssim \partial_{jk}\mathcal{L}(x),
\\&\partial_{jkl}\mathcal{L}(x)\lesssim \partial_{jkl}\mathcal{L}(x+\omega) \lesssim \partial_{jkl}\mathcal{L}(x),
\end{align*}
where $1\leq j,k,l\leq p.$ Here, ``$\lesssim$" means $\leq$ up to a
universal constant.
\end{assumption}

\begin{example}\label{example:general-g}
Consider
$\mathcal{L}_{\lambda}(x)=\sum^{p}_{j=1}g_{j,\lambda}(x_j)/p$, where
$x=(x_1,\dots,x_p)'$ and $\lambda$ is a thresholding parameter. Here
we assume that $g_{j,\lambda}(x)=0$ for $|x|<\lambda$ and
$g_{j,\lambda}$ satisfies that $\sum^{p}_{j=1}|\partial
g_{j,\lambda}(x)/\partial x|/p\leq C$ for some constant $C>0.$ It is
straightforward to verify that
$\sum^{p}_{j=1}|\partial_j\mathcal{L}_{\lambda}(x)|/p\leq C$,
$\partial_{j}\partial_k\mathcal{L}_{\lambda}(x)=0$ and
$\partial_{j}\partial_k\partial_l \mathcal{L}_{\lambda}(x)=0$ for
$1\leq j,k,l\leq p.$ Note that with proper choice of
$g_{j,\lambda}$, $\mathcal{L}_\lambda(x)$ provides a smooth
approximation to the function
$\sum^{p}_{j=1}|x_j|\mathbf{1}\{|x_j|>\lambda\}$ which serves as a
building block for the higher criticism test in \cite{zcx2013}.
\end{example}

Assumption \ref{assum:general-g} generalizes the results in Lemmas
A.5 and A.6 of \cite{cck13}. Consider the dependency graph in
Section \ref{subsec:dep-graph}. Parallel to Proposition \ref{prop1},
we have the following result. With slightly abuse of notation, set
$m=g\circ \mathcal{L}$ with $g\in C_b^{3}(\mathbb{R})$.
\begin{proposition}\label{prop3}
Assume that $2\sqrt{5} D_n^2M_{xy}/\sqrt{n} \in \mathcal{B}_p$ with
$M_{xy}=\max\{M_x,M_y\}$. Then under Assumption
\ref{assum:general-g}, we have for any $\Delta>0,$
\begin{equation}\label{eq:prop3}
\begin{split}
&|\E [m(X)-m(Y)]|\lesssim
\{G_2L^2_1(p)+G_1L_2(p)\}\phi(M_x,M_y)\\&+\{G_3L_1^3(p)+3G_2L_1(p)L_2(p)+G_1L_3(p)\}\frac{D_n^{2}}{\sqrt{n}}(\bar{m}_{x,3}^3+\bar{m}_{y,3}^3)
\\&+\{G_3L_1^3(p)+3G_2L_1(p)L_2(p)+G_1L_3(p)\}\frac{D_n^{3}}{\sqrt{n}}(m_{x,3}^3+m_{y,3}^3)+G_1\Delta+G_0\E[1-\mathcal{I}],
\end{split}
\end{equation}
where $G_k=\sup_{z\in\mathbb{R}}|\partial^k g(z)/\partial z^k|$ for
$k\geq 0$. In addition, if $2\sqrt{5}D_n^3M_{xy}/\sqrt{n} \in
\mathcal{B}_p$, we can replace $m_{x,3}^3+m_{y,3}^3$ by
$\bar{m}_{x,3}^3+\bar{m}_{y,3}^3$ in the above upper bound.
\end{proposition}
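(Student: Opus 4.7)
The plan is to mirror the proof of Proposition~\ref{prop1} line by line, using the Slepian interpolation $Z(t)=(\sqrt{t}\widetilde X+\sqrt{1-t}\widetilde Y)$ and the leave-one-block-out decomposition $\E[m(\widetilde X)-m(\widetilde Y)] = \frac12(I_1+I_2+I_3)$ with $I_1=0$ by independence, $I_2$ further split (via a second-order Taylor expansion around the expanded neighborhood $\mathcal N_i\cup\widetilde N_i$) into a covariance-matching part $I_{21}$ and a third-order remainder $I_{22}$, and $I_3$ a third-order remainder. The only substantive change relative to Proposition~\ref{prop1} is that the derivative bounds used for $m$ come from Assumption~\ref{assum:general-g} via the chain rule rather than from the explicit smoothness of $F_\beta$.

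First I would record the chain-rule identities
\begin{align*}
\partial_{jk}m &= g''\,\partial_j\mathcal L\,\partial_k\mathcal L + g'\,\partial_{jk}\mathcal L,\\
\partial_{jkl}m &= g'''\,\partial_j\mathcal L\,\partial_k\mathcal L\,\partial_l\mathcal L
+ g''\bigl\{\partial_{jk}\mathcal L\,\partial_l\mathcal L + \partial_{jl}\mathcal L\,\partial_k\mathcal L + \partial_{kl}\mathcal L\,\partial_j\mathcal L\bigr\}
+ g'\,\partial_{jkl}\mathcal L,
\end{align*}
and sum them under the first clause of Assumption~\ref{assum:general-g} to obtain the two pivotal bounds $\sum_{j,k}|\partial_{jk}m|\lesssim G_2L_1^2(p)+G_1L_2(p)$ and $\sum_{j,k,l}|\partial_{jkl}m|\lesssim G_3L_1^3(p)+3G_2L_1(p)L_2(p)+G_1L_3(p)$. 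These replace the $G_2+2G_1\beta$ and $G_3+6G_2\beta+6G_1\beta^2$ quantities from Lemma A.5 of \cite{cck13}, so whenever the original proof pulls those constants out of $I_{21}$, $I_{22}$ and $I_3$ I would substitute the new ones. The $I_{21}$ estimate then produces the covariance-matching term $\{G_2L_1^2(p)+G_1L_2(p)\}\,\phi(M_x,M_y)$ via the same computation that yields \eqref{eq:cov-ts}, while $I_{22}$ and $I_3$ yield the terms carrying $\{G_3L_1^3(p)+3G_2L_1(p)L_2(p)+G_1L_3(p)\}$, multiplied by the same $D_n^2/\sqrt n$ and $D_n^3/\sqrt n$ block-moment factors as before.

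The step that was previously enabled by $2\sqrt 5\beta D_n^2M_{xy}/\sqrt n\le 1$ is now enabled by the hypothesis $2\sqrt 5 D_n^2M_{xy}/\sqrt n\in\mathcal B_p$: in both $I_{21}$ and $I_{22}$ one needs to replace derivatives of $m$ evaluated at the perturbed points $\mathcal Z^{(i)}(t)$ or $\mathcal Z^{(i)}(t)+\tau\mathcal V^{(i)}(t)$ by their values at $Z(t)$. These perturbations $\omega$ satisfy $\max_j|\omega_j|\le\sqrt 5 D_n^2M_{xy}/\sqrt n$, which lies in $\mathcal B_p$; the second clause of Assumption~\ref{assum:general-g} then grants comparability of $\partial_j\mathcal L$, $\partial_{jk}\mathcal L$, $\partial_{jkl}\mathcal L$ at $x$ and $x+\omega$ up to universal constants, and the chain-rule identities above propagate this comparability to $\partial_{jk}m$ and $\partial_{jkl}m$. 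The final, truncation-error step is the direct analog of \eqref{eq:prop1-trun-error}, using the $G_1$-Lipschitz bound inherited from $g$. Strengthening the hypothesis to $2\sqrt 5 D_n^3M_{xy}/\sqrt n\in\mathcal B_p$ extends the comparability clause to perturbations over the larger set $\mathfrak N_i\cup\mathcal N_i\cup\widetilde N_i$, which is what lets one bound $I_{22}$ using $\bar m_{x,3}^3+\bar m_{y,3}^3$ instead of $m_{x,3}^3+m_{y,3}^3$, exactly as in the last sentence of Proposition~\ref{prop1}.

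The main obstacle, and the point that deserves careful bookkeeping, is the replacement of derivatives at perturbed arguments with derivatives at $Z(t)$. In Proposition~\ref{prop1} this was handled quantitatively by the upper envelopes $U_{jk}(z),U_{jkl}(z)$ of Lemma A.5--A.6 of \cite{cck13}, whose structural properties made the swap essentially automatic. Here I must instead argue, term by term in the chain-rule expansion, that the qualitative comparability granted by Assumption~\ref{assum:general-g} applies to every perturbed argument appearing in $I_2$ and $I_3$, and that the induced constants do not depend on $p$. Once this structural verification is done, the remaining steps are the same Slepian/leave-one-block-out bookkeeping that produced \eqref{eq:prop1}.
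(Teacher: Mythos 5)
Your proposal is correct and matches the paper's intent exactly: the paper gives no separate proof of Proposition~\ref{prop3}, stating only that it ``follows from similar arguments in the proof of Proposition~\ref{prop1}'' with Assumption~\ref{assum:general-g} standing in for Lemmas A.5--A.6 of \cite{cck13}, which is precisely the substitution you carry out, and your chain-rule sums reproduce the stated coefficients $G_2L_1^2(p)+G_1L_2(p)$ and $G_3L_1^3(p)+3G_2L_1(p)L_2(p)+G_1L_3(p)$. Your reading of the $\mathcal{B}_p$ hypothesis as the replacement for $2\sqrt{5}\beta D_n^2M_{xy}/\sqrt{n}\leq 1$ (and the $D_n^3$ version for the $\bar{m}_{x,3}^3+\bar{m}_{y,3}^3$ refinement of $I_{22}$ via the enlarged neighborhood $\mathfrak{N}_i\cup\mathcal{N}_i\cup\widetilde{N}_i$) is also the intended one.
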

With the aid of Assumption \ref{assum:general-g}, Proposition
\ref{prop3} follows from similar arguments in the proof of
Proposition \ref{prop1} (the technical details are omitted to
conserve space). When specialized to stationary $M$-dependent time
series, we have the following result.
\begin{theorem}\label{thm-general-g}
Suppose $2\sqrt{5}(6M+1)M_{xy}/\sqrt{n}\in\mathcal{B}_p$ with
$M_{xy}=\max\{M_x,M_y\}$, and $M_x>u_x(\gamma)$ and
$M_y>u_y(\gamma)$ for some $\gamma\in (0,1)$. Then
\begin{equation}\label{eq:general-g-m-dep}
\begin{split}
|\E [m(X)-m(Y)]|\lesssim & \{G_2L^2_1(p)+G_1L_2(p)\}\phi(M_{x},M_y)
\\&+\{G_3L_1^3(p)+3G_2L_1(p)L_2(p)+G_1L_3(p)\}\frac{(2M+1)^{2}}{\sqrt{n}}(\bar{m}_{x,3}^3+\bar{m}_{y,3}^3)
\\&+G_1\varphi(M_{x},M_y)\sigma_j\sqrt{8\log(p/\gamma)}+G_0\gamma.
\end{split}
\end{equation}
Under Condition (\ref{eq:m-dep-couple}), we may set
$\phi(M_{x},M_y)=C(1/M_x+1/M_y)$ and
$\varphi(M_{x},M_y)=C'(1/M_x^{5/6}+1/M_y^{5/6})$ for some constants
$C,C'>0$ in (\ref{eq:general-g-m-dep}).
\end{theorem}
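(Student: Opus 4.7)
The plan is to mimic closely the derivation of Theorem~\ref{thm1} from Proposition~\ref{prop1}, but starting from the general-function version Proposition~\ref{prop3}. First I would specialize Proposition~\ref{prop3} to the $M$-dependent case, noting that $D_n = 2M+1$, $|\widetilde N_i| \le 2M+1$, and $|\mathcal{N}_i \cup \widetilde N_i| \le 4M+1$, so the condition $2\sqrt{5}D_n^2 M_{xy}/\sqrt n \in \mathcal{B}_p$ reduces to the stated hypothesis on $(2M+1) M_{xy}/\sqrt n$. Because the expanded neighborhoods still have cardinality of order $M$, one can upgrade the third-order term from $D_n^3 (m_{x,3}^3+m_{y,3}^3)/\sqrt n$ to $D_n^2(\bar m_{x,3}^3+\bar m_{y,3}^3)/\sqrt n$, exactly as in the passage from Proposition~\ref{prop1} to Corollary~\ref{corollary:m-dep} (via the alternative bound on $I_{22}$).

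Next I would dispose of the remainder $G_1\Delta + G_0 \E[1-\mathcal{I}]$ coming from Proposition~\ref{prop3}. Split
\[
\E[1-\mathcal{I}] \le \sum_{j=1}^p\{P(|X_j-\widetilde X_j|>\Delta) + P(|Y_j-\widetilde Y_j|>\Delta)\}
\]
and apply Lemma~\ref{lemma:self} to the $M$-dependent arrays $\{x_{ij}-\widetilde x_{ij}\}_i$ and $\{y_{ij}-\widetilde y_{ij}\}_i$. Writing $\Lambda_j = \Lambda_{1j}+\Lambda_{2j}$ as in the proof of Theorem~\ref{thm1}, the self-normalizing bound gives $|X_j-\widetilde X_j|\le \Lambda_j\sqrt{8\log(p/\gamma)}$ uniformly in $j$ with probability $\ge 1-8\gamma$, and on the event $\{\max_{ij}|x_{ij}|\le M_x, \max_{ij}|y_{ij}|\le M_y\}$ (of probability $\ge 1-2\gamma$ by the definition of $u_x(\gamma), u_y(\gamma)$ and the assumption $M_x>u_x(\gamma)$, $M_y>u_y(\gamma)$), the block-variance expressions yield $\Lambda_j \lesssim \varphi(M_x,M_y)\sigma_j$. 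Choosing $\Delta = C\varphi(M_x,M_y)\sigma_j\sqrt{\log(p/\gamma)}$ absorbs $G_1\Delta$ into the penultimate term in the target bound, while $G_0\E[1-\mathcal{I}]$ is absorbed into $G_0\gamma$. Combining everything produces (\ref{eq:general-g-m-dep}).

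For the second statement, I would reuse verbatim the truncation bookkeeping performed in the proof of Theorem~\ref{thm2}. Under condition (\ref{eq:m-dep-couple}) the coupling inequalities give $|\E x_{1j}x_{(l+1)k} - \E\widetilde x_{1j}\widetilde x_{(l+1)k}|$ bounds scaling like $1/M_x$, which telescope to $\phi(M_x,M_y)\lesssim 1/M_x + 1/M_y$. The analogous computation of $\sum_{l=1-N}^{N-1}|\E\breve x_{1k}\breve x_{(1+l)k}|$ carried out there, together with the explicit control of $\E(A_{ij}-\breve A_{ij})^2$ and $\E(B_{ij}-\breve B_{ij})^2$, yields $\varphi(M_x,M_y)\lesssim 1/M_x^{5/6}+1/M_y^{5/6}$ once the $\sqrt{N}/M_x^3$ contribution is dominated (which is the case here because we do not need the sharper Theorem~\ref{thm2} tradeoff; the truncation level is used only to bound $\Lambda_j$).

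The main obstacle, as in Theorem~\ref{thm1}, is keeping careful track of how the smoothing parameter $\beta$ implicit in the Slepian argument has been absorbed into the $L_k(p)$ factors under Assumption~\ref{assum:general-g}. Once this is done the rest of the argument is a routine adaptation of the $F_\beta$-based bookkeeping to the general-function bookkeeping, and no further probabilistic ingredient beyond Lemma~\ref{lemma:self} and the coupling bounds is required.
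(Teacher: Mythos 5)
Your outline is correct and follows essentially the route the paper intends: the paper omits the proof of this theorem, stating only that it follows by specializing Proposition~\ref{prop3} to the $M$-dependent case and repeating the arguments of Corollary~\ref{corollary:m-dep}, Theorem~\ref{thm1} (Lemma~\ref{lemma:self} plus the truncation event to dispose of $G_1\Delta+G_0\E[1-\mathcal{I}]$) and Theorem~\ref{thm2} (for the explicit forms of $\phi$ and $\varphi$ under (\ref{eq:m-dep-couple})), which is exactly what you do. Your explicit remark that the $\sqrt{N}/M_x^3$ contribution to $\varphi$ must be dominated by $1/M_x^{5/6}$ is, if anything, slightly more careful than the paper's own statement.
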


\begin{remark}\label{os:rem}
Consider
$\mathcal{L}_{\lambda}(x)=\sum^{p}_{j=1}g_{j,\lambda}(x_j)/p$ in
Example \ref{example:general-g}. When $M_x>u_x(\gamma)$ and
$M_y>u_y(\gamma)$ for some $\gamma\in (0,1),$ we have
\begin{equation}
\begin{split}
|\E [m_{\lambda}(X)-m_{\lambda}(Y)]|\lesssim &
G_2\phi(M_{x},M_y)+G_3\frac{(2M+1)^{2}}{\sqrt{n}}(\bar{m}_{x,3}^3+\bar{m}_{y,3}^3)
\\&+G_1\varphi(M_{x},M_y)\sigma_j\sqrt{8\log(p/\gamma)}+G_0\gamma,
\end{split}
\end{equation}
where $m_{\lambda}=g\circ \mathcal{L}_{\lambda}$. Under Condition
(\ref{eq:m-dep-couple}),
\begin{equation}
\begin{split}
|\E [m_{\lambda}(X)-m_{\lambda}(Y)]|\lesssim &
G_2(1/M_x+1/M_y)+G_3\frac{(2M+1)^{2}}{\sqrt{n}}(\bar{m}_{x,3}^3+\bar{m}_{y,3}^3)
\\&+G_1(1/M_x^{5/6}+1/M_y^{5/6})\sigma_j\sqrt{8\log(p/\gamma)}+G_0\gamma.
\end{split}
\end{equation}
By letting $M_x\rightarrow +\infty$, $M_y\rightarrow +\infty$, and
$\gamma=M^2/\sqrt{n}$, we deduce that $|\E
[m_{\lambda}(X)-m_{\lambda}(Y)]|\lesssim
M^2(\bar{m}_{x,3}^3+\bar{m}_{y,3}^3)/\sqrt{n}$. Note that in this
case, $p$ is allowed to grow arbitrarily.
%{\color{red} Define $m_{\max}=g\circ
%\max_{\lambda \in \Lambda}\mathcal{L}_{\lambda_{n,p}}$ for some
%$\Lambda_{n,p} \subset \mathbb{R}$ with $|\Lambda_{n,p}|=O(p)$.
%To study $|\E [m_{\max}(X)-m_{\max}(Y)]|$, we consider}
\end{remark}


\begin{thebibliography}{9}

%\bibitem{Anderson2003}
%\par\noindent\hangindent2.3em\hangafter 1
%\textsc{Anderson, T. W.} (2003). {\it An Introduction to
%Multivariate Statistical Analysis}, 3rd edn. New York:
%Wiley-Interscience.

\bibitem{andrews1991}
\par\noindent\hangindent2.3em\hangafter 1
\textsc{Andrews, D. W. K.} (1991). Heteroskedasticity and
autocorrelation consistent covariance matrix estimation. {\it
Econometrica} \textbf{59} 817-858.


\bibitem{abr10}
\par\noindent\hangindent2.3em\hangafter 1
\textsc{Arlot, S., Blanchard, G. and Roquain, E.} (2010). Some
non-asymptotic results on resampling in high dimension \Rmnum{1}:
confidence regions. {\it Ann. Statist.} \textbf{38} 51-82.


\bibitem{bent03}
\par\noindent\hangindent2.3em\hangafter 1
\textsc{Bentkus, V.} (2003). On the dependence of the Berry-Esseen
bound on dimension. {\it J. Statist. Plann. Infer.} \textbf{113}
385-402.


\bibitem{bp1970}
\par\noindent\hangindent2.3em\hangafter 1
\textsc{Box, G. E. P. and Pierce, D. A.} (1970). Distribution of
residual autocorrelations in autoregressive-integrated moving
average time series models. {\it J. Amer. Statist. Assoc.}
\textbf{65} 1509-1526.


\bibitem{brillinger}
\par\noindent\hangindent2.3em\hangafter 1
\textsc{Brillinger, D. R.} (1975). {\it Time Series: Data Analysis
and Theory}. San Francisco: Holden-Day.


\bibitem{bk99}
\par\noindent\hangindent2.3em\hangafter 1
\textsc{B\"{u}hlmann, P. and K\"{u}nsch, H. R.} (1999). Block length
selection in the bootstrap for time series. {\it Comput. Stat. Data
An.} \textbf{31} 295-310.


\bibitem{clx13}
\par\noindent\hangindent2.3em\hangafter 1
\textsc{Cai, T. T., Liu, W. D. and Xia, Y.} (2013). Two-sample
covariance matrix testing and support recovery in high-dimensional
and sparse settings. {\it J. Am. Statist. Assoc.} \textbf{108}
265-277.


\bibitem{clx14}
\par\noindent\hangindent2.3em\hangafter 1
\textsc{Cai, T. T., Liu, W. D. and Xia, Y.} (2014). Two-sample test
of high dimensional means under dependence. {\it J. R. Stat. Soc.
Ser. B Stat. Methodol.} \textbf{76} 349-372.


\bibitem{cj11}
\par\noindent\hangindent2.3em\hangafter 1
\textsc{Cai, T. T. and Jiang, T.} (2011). Limiting laws of coherence
of random matrices with applications to testing covariance structure
and construc tion of compressed sensing matrices. {\it Ann.
Statist.} \textbf{39} 1496-1525.


\bibitem{Carl1986}
\par\noindent\hangindent2.3em\hangafter 1
\textsc{Carlstein, E.} (1986). The use of subseries values for
estimating the variance of a general statistic from a stationary
sequence. {\it Ann. Statist.} \textbf{14} 1171-1179.


\bibitem{c2005}
\par\noindent\hangindent2.3em\hangafter 1
\textsc{Chatterjee, S.} (2005). An error bound in the
Sudakov-Fernique inequality. arXiv:math/0510424.


\bibitem{cxw2013}
\par\noindent\hangindent2.3em\hangafter 1
\textsc{Chen, X., Xu, M. and Wu, W. B.} (2013). Covariance and
precision matrix estimation for high-dimensional time series. {\it
Ann. Statist.} \textbf{41} 2994-3021.


\bibitem{cq2010}
\par\noindent\hangindent2.3em\hangafter 1
\textsc{Chen, S. X. and Qin, Y.-L.} (2010). A two sample test for
high dimensional data with applications to gene-set testing. {\it
Ann. Statist.} \textbf{38} 808-835.


\bibitem{czz2010}
\par\noindent\hangindent2.3em\hangafter 1
\textsc{Chen, S. X., Zhang, L.-X. and Zhong, P.-S.} (2010). Tests
for high-dimensional covariance matrices. {\it J. Amer. Statist.
Assoc.} \textbf{105} 810-819.

%\bibitem{ch10}
%\par\noindent\hangindent2.3em\hangafter 1
%\textsc{Cheng, G. and Huang, J.Z.} (2010). Bootstrap consistency for
%general semiparametric M-estimation {\it Ann. Statist.} \textbf{38}
%2884-2915.


\bibitem{cck12}
\par\noindent\hangindent2.3em\hangafter 1
\textsc{Chernozhukov, V., Chetverikov, D. and Kato, K.} (2012).
Comparison and anticoncentration bounds for maxima of Gaussian
random vectors. {\it Probab. Theory Relat. Fields} to appear.

%\bibitem{cck13_2}
%\par\noindent\hangindent2.3em\hangafter 1
%\textsc{Chernozhukov, V., Chetverikov, D. and Kato, K.} (2013).
%Testing many moment inequalities. arXiv:1312.7614.

\bibitem{cck13}
\par\noindent\hangindent2.3em\hangafter 1
\textsc{Chernozhukov, V., Chetverikov, D. and Kato, K.} (2013).
Gaussian approximations and multiplier bootstrap for maxima of sums
of high-dimensional random vectors. {\it Ann. Statist.} \textbf{41}
2786-2819.


%\bibitem{D1997}
%\par\noindent\hangindent2.3em\hangafter 1
%\textsc{Dahlhaus, R.} (1997). Fitting time series models to nonstationary processes. {\it Ann. Statist.} \text{25}, 1-37.


\bibitem{dls2009}
\par\noindent\hangindent2.3em\hangafter 1
\textsc{de la Pe\~{n}a, V., Lai, T. and Shao, Q.-M.} (2009). {\it
Self-Normalized Processes: Limit Theory and Statistical
Applications}. Springer.


\bibitem{deo2000}
\par\noindent\hangindent2.3em\hangafter 1
\textsc{Deo, R. S.} (2000). Spectral tests for the martingale
hypothesis under conditional heteroscedasticity. {\it J.
Econometrics} \textbf{99} 291-315.


\bibitem{HHJ95}
\par\noindent\hangindent2.3em\hangafter 1
\textsc{Hall, P., Horowitz, J. L. and Jing, B.-Y.} (1995). On
blocking rules for the bootstrap with dependent data. {\it
Biometrika} \textbf{82} 561-574.


\bibitem{HJ10}
\par\noindent\hangindent2.3em\hangafter 1
\textsc{Hall, P. and Jin, J.} (2010). Innovated higher criticism for
detecting sparse signals in correlated noise. {\it Ann. Statist.}
\textbf{38} 1686-1732.


\bibitem{hrrs86}
\par\noindent\hangindent2.3em\hangafter 1
\textsc{Hampel, F., Ronchetti, E., Rousseeuw, P. and Stahel, W.}
(1986). {\it Robust Statistics: The Approach Based on Influence
Functions}. New York: John Wiley.


\bibitem{hong96}
\par\noindent\hangindent2.3em\hangafter 1
\textsc{Hong, Y.} (1996). Consistent testing for serial correlation
of unknown form. {\it Econometrica} \textbf{64} 837-864.


\bibitem{kit97}
\textsc{Kitamura, Y.} (1997). Empirical likelihood methods with
weakly dependent processes. {\it Ann. Statist.} \textbf{25}
2084-2102.


\bibitem{kun89}
\par\noindent\hangindent2.3em\hangafter 1
\textsc{K\"{u}nsch, H.} (1989). The jackknife and the bootstrap for
general stationary observations. {\it Ann. Statist.} \textbf{17}
1217-1241.


\bibitem{l2001}
\par\noindent\hangindent2.3em\hangafter 1
\textsc{Ledoux, M.} (2001). {\it Concentration of Measure
Phenomenon}. American Mathematical Society.


\bibitem{lc2012}
\par\noindent\hangindent2.3em\hangafter 1
\textsc{Li, J. and Chen, S. X.} (2012). Two sample tests for high
dimensional covariance matrices. {\it Ann. Statist.} \textbf{40}
908-940.


\bibitem{lb95}
\par\noindent\hangindent2.3em\hangafter 1
\textsc{Ligeralde, A. and Brown, B.} (1995). Band covariance matrix
estimation using restricted residuals: A Monte Carlo analysis. {\it
Internat. Econom. Rev.} \textbf{36} 751-767.


\bibitem{ls92}
\textsc{Liu, R. Y. and Singh, K.} (1992). Moving block jackknife and
bootstrap capture weak dependence. In {\it Exploring the Limits of
Bootstrap}, Ed. R. LePage and L. Billard, pp. 225-248. New York:
John Wiley.


\bibitem{ll2009}
\par\noindent\hangindent2.3em\hangafter 1
\textsc{Liu, W. and Lin, Z.} (2009). Strong approximation for a
class of stationary processes. {\it Stochastic Process. Appl.}
\textbf{119} 249-280.

\bibitem{lls08}
\par\noindent\hangindent2.3em\hangafter 1
\textsc{Liu, W., Lin, Z.Y. and Shao, Q.-M.} (2008). The asymptotic
distribution and Berry-Esseen bound of a new test for independence
in high dimension with an application to stochastic optimization.
{\it Ann. Appl. Probab.} \textbf{18} 2337-2366.


\bibitem{pp01}
\par\noindent\hangindent2.3em\hangafter 1
\textsc{Paparoditis, E. and Politis, D. N.} (2001). Tapered block
bootstrap. {\it Biometrika} \textbf{88} 1105-1119.


\bibitem{prw99}
\textsc{Politis, D. N., Romano, J. P. and Wolf, M.} (1999). {\it
Subsampling}, Springer-Verlag, New York.


\bibitem{qc2012}
\par\noindent\hangindent2.3em\hangafter 1
\textsc{Qiu, Y-M. and Chen, S. X.} (2012). Test for bandedness of
high dimensional covariance matrices with bandwidth estimation, {\it
Ann. Statist.} \textbf{40} 1285-1314.


\bibitem{Robinson1991}
\par\noindent\hangindent2.3em\hangafter 1
\textsc{Robinson, P. M.} (1991). Testing for strong serial
correlation and dynamic conditional heteroskedasticity in multiple
regression. {\it J. Econometrics} \textbf{47} 67-84.


\bibitem{Rollin2011}
\par\noindent\hangindent2.3em\hangafter 1
\textsc{R\"{o}llin, A.} (2011). Stein's method in high dimensions
with applications. {\it Ann. Inst. H. Poincar\'{e} Probab. Statist.}
\textbf{49} 529-549.


%\bibitem{shao2010}
%\par\noindent\hangindent2.3em\hangafter 1
%\textsc{Shao, X.} (2010). A self-normalized approach to confidence
%interval construction in time series. {\it J. R. Stat. Soc. Ser. B
%Stat. Methodol.} \textbf{72} 343-366.


\bibitem{stein1986}
\par\noindent\hangindent2.3em\hangafter 1
\textsc{Stein, C.} (1986). {\it Approximate computation of
expectations}. Institute of Mathematical Statistics Lecture Notes,
Monograph Series, 7.


\bibitem{twyz2011}
\par\noindent\hangindent2.3em\hangafter 1
\textsc{Tao, M., Wang, Y., Yao, Q. and Zou, J.} (2011). Large
volatility matrix inference via combining low-frequency and
high-frequency approaches. {\it J. Amer. Statist. Assoc.}
\textbf{106} 1025-1040.


\bibitem{vw96}
\par\noindent\hangindent2.3em\hangafter 1
\textsc{van der Vaart, A. W. and Wellner, J. A.} (1996). {\it Weak
Convergence and Empirical Processes}. Springer Verlag, New York.


\bibitem{wh2010}
\par\noindent\hangindent2.3em\hangafter 1
\textsc{Wikle, C. K. and Hooten, M. B.} (2010). A general
science-based framework for dynamical spatio-temporal models. {\it
TEST} \textbf{19} 417-451.


\bibitem{wu2005}
\par\noindent\hangindent2.3em\hangafter 1
\textsc{Wu, W. B.} (2005). Nonlinear system theory: Another look at
dependence. {\it Proc. Natl. Acad. Sci. USA} \textbf{102}
14150-14154 (electronic).


\bibitem{wu2011}
\par\noindent\hangindent2.3em\hangafter 1
\textsc{Wu, W. B.} (2011). Asymptotic theory for stationary
processes. {\it Stat. Interface} \textbf{4} 207-226.


\bibitem{wushao2004}
\par\noindent\hangindent2.3em\hangafter 1
\textsc{Wu, W. B. and Shao, X.} (2004). Limit theorems for iterated
random functions. {\it J. Appl. Probab.} \textbf{41} 425-436.


%\bibitem{wz2011}
%\par\noindent\hangindent2.3em\hangafter 1
%\textsc{Wu, W. B. and Zhou, Z.} (2011). Gaussian approximations for
%non-stationary multiple time series. {\it Statistica Sinica}
%\textbf{21} 1397-1413.

%
%\bibitem{y94}
%\par\noindent\hangindent2.3em\hangafter 1
%\textsc{Yu, B.} (1994). Rates of convergence for empirical processes
%of stationary mixing sequences. {\it Ann. Probab.} \textbf{22}
%94-116.

\bibitem{zcx2013}
\par\noindent\hangindent2.3em\hangafter 1
\textsc{Zhong, P.-S., Chen, S. X. and Xu, M.} (2013). Tests
alternative to higher criticism for high dimensional means under
sparsity and column-wise dependence. {\it Ann. Statist.} \textbf{41}
2820-2851.

\end{thebibliography}
\end{document}